\documentclass{birkjour_t2}
\usepackage{amsmath,amssymb,amsthm,color,mathrsfs}
\usepackage{graphicx}
\usepackage{parskip}
\usepackage{nicefrac}

\newtheorem{theorem}{Theorem}[section]
\newtheorem{definition}{Definition}[section]
\newtheorem{proposition}{Proposition}[section]
\newtheorem{lemma}{Lemma}[section]
\newtheorem{corollary}{Corollary}[section]
\newtheorem{remark}{Remark}[section]
\newtheorem{Notation}{Notation}[section]

\newcommand{\nc}{\newcommand}
\nc{\RR}{\mathbb{R}}
\nc{\R}{\mathbb{R}}
\nc{\CC}{\mathbb{C}}
\nc{\C}{\mathbb{C}}
\nc{\mrm}{\mathrm}
\nc{\mL}{\mrm{L}}
\nc{\mH}{\mrm{H}}
\nc{\mW}{\mrm{W}}
\nc{\mK}{\mrm{K}}
\nc{\mD}{\mrm{D}}
\nc{\calL}{\mathcal{L}}
\nc{\loc}{\mrm{loc}}
\nc{\comp}{c}
\nc{\supp}{\mrm{supp}}
\nc{\Hardy}{\mathfrak{H}}
\nc{\calH}{\mathcal{H}}
\nc{\ctru}{\mathfrak{u}}
\nc{\ctrv}{\mathfrak{v}}
\nc{\be}{\boldsymbol{e}}
\nc{\bx}{\boldsymbol{x}}
\nc{\by}{\boldsymbol{y}}
\nc{\bz}{\boldsymbol{z}}
\nc{\bc}{\boldsymbol{c}}
\nc{\lbr}{\lbrack}
\nc{\rbr}{\rbrack}
\nc{\dsp}{\displaystyle}
\nc{\vphi}{\varphi}

\begin{document}

\title[Continuity estimates for Riesz potentials on polygonal boundaries]{Continuity estimates for Riesz potentials on \\polygonal boundaries}

\author[X. Claeys]{Xavier Claeys}
\address{Sorbonne Université, Université Paris-Diderot SPC,\\ CNRS, Inria,
	Laboratoire Jacques-Louis Lions, équipe Alpines,\\  F-75005 Paris, France}

\author[M. Hassan]{Muhammad Hassan}
\address{Sorbonne Université, CNRS, Université de Paris,\\ Laboratoire Jacques-Louis Lions (LJLL),\\ F-75005 Paris, France}
\email{hassan@ljll.math.upmc.fr}

\author[B. Stamm]{Benjamin Stamm}
\address{Applied and Computational Mathematics, \\ Department of Mathematics, RWTH Aachen University,\\ 52062 Aachen, Germany}

\begin{abstract}
	
Riesz potentials are well known objects of study in the theory of singular integrals that have been the subject of recent, increased interest from the numerical analysis community due to their connections with fractional Laplace problems and proposed use in certain domain decomposition methods. While the $\mL^p$-mapping properties of Riesz potentials on flat geometries are well-established, comparable results on rougher geometries for Sobolev spaces are very scarce. In this article, we study the continuity properties of the surface Riesz potential generated by the $1/\sqrt{x}$ singular kernel on a polygonal domain $\Omega \subset \mathbb{R}^2$. We prove that this surface Riesz potential maps $\mL^{2}(\partial\Omega)$ into $\mH^{+1/2}(\partial\Omega)$. Our proof is based on a careful analysis of the Riesz potential in the neighbourhood of corners of the domain $\Omega$. The main tool we use for this corner analysis is the Mellin transform which can be seen as a counterpart of the Fourier transform that is adapted to corner geometries.
\end{abstract}
	
	\subjclass{ 45P05, 47G10, 47G30, 65R99}
	
\keywords{Riesz potentials, polygonal domains, continuity estimates, Mellin transform, Sobolev spaces} 
	
		\maketitle
		\vspace{-10mm}
	\section{Introduction}
	Although Riesz potentials and fractional integrals are classical objects of study in harmonic analysis, they have been the subject of recent attention from the perspective of numerical
	analysis because of a growing interest in the solution to fractional Laplace problems.
	Riesz potentials have been analysed for a long time in flat or smooth geometries \cite{zbMATH03329342} where, if needed, micro-local analysis offers powerful tools to study their
	properties in detail (see e.g. \cite{zbMATH05288573}). In rougher geometries, where Fourier calculus
	is no longer available, more sophisticated approaches such as Calderon-Zygmund theory
	(see, e.g., \cite{zbMATH01015091, zbMATH00447275}) are required, and in this context most of the results offered by the
	literature on singular integrals are formulated in the functional framework of either $\mL^p$ spaces
	or Besov spaces.
	
	On the other hand, Sobolev spaces appear as the functional setting of reference in numerical analysis
	because the variational theory of Galerkin discretisations classically relies on regularity properties in these spaces. This state of affairs makes the analysis of singular integrals more delicate when they
	arise in the context of PDE discretisations by, for instance, finite element schemes. Sobolev regularity results
	on general Lipschitz manifolds have been established by Costabel \cite{zbMATH04050118} for the classical single layer, double layer and hypersingular boundary integral operators but, to the best of our knowledge, comparable results are still not available for Riesz potentials. Having said this, certain natural variational mapping properties for Riesz potentials on Lipschitz surfaces have been derived in \cite{zbMATH06052864, zbMATH06281692}, and there is admittedly an active literature on the numerical solution to fractional Laplace problems (see, e.g., \cite{zbMATH06693795,zbMATH07302778} and the references therein) but these are considered on flat spaces (with a potentially Lipschitz or polyhedral/polygonal boundary).

	Recent works on domain decomposition for wave propagation problems by means of Optimized Schwarz methods
	\cite{collino2000domain,collino2019exponentially,lecouvez:tel-01444540} have also made use of Riesz potentials. In this approach, the computational domain is split according to a non-overlapping subdomain partition and local wave equations with Robin-type boundary condition are solved in each subdomain with the coupling between subdomains being enforced by exchanging Robin traces of the form $\partial_{\boldsymbol{n}}u\vert_{\Gamma} \pm iT(u\vert_{\Gamma})$ across interfaces $\Gamma$. The precise choice of the impedance factor $T$ that comes into play in these Robin traces plays a crucial role in the convergence properties of these domain decomposition algorithms. In the strategy proposed in \cite{collino2000domain,collino2019exponentially,lecouvez:tel-01444540}, the impedance factor takes the form $T = \Lambda^*\Lambda$ where $\Lambda$ is a Riesz potential supported on polygonal interfaces $\Gamma$. The mapping properties of $\Lambda$ then appear as a cornerstone of the convergence analysis of this algorithm.
	
	The above domain decomposition context is the primary motivation for the present work where we study the
	surface Riesz potential associated with the singularity $1/\sqrt{x}$. More precisely, we study, in the case of a polygonal domain $\Omega \subset \mathbb{R}^2$, the surface Riesz potential given by
	\begin{equation*}
		\mathscr{A}(u)(\bx) = \int_{\partial\Omega}\frac{u(\by)d\sigma(\by)}{\sqrt{\vert \bx-\by\vert}}\quad \bx\in\partial\Omega.
	\end{equation*}
	It is already known from \cite{zbMATH06052864, zbMATH06281692} that $\mathscr{A}$ maps $\mH^{-1/4}(\partial\Omega)$ into $\mH^{+1/4}(\partial\Omega)$. In the present article, we prove that $\mathscr{A}$ maps $\mL^{2}(\partial\Omega)$ into $\mH^{+1/2}(\partial\Omega)$ (see Theorem \ref{thm:1} below). 

	The remainder of this article is organised as follows. In Section \ref{Chap:8:sec:2} we establish some notation and
	state precisely the definitions of trace Sobolev spaces that we require for our analysis. Next, in Section
	\ref{Chap:8:sec:3}, we properly define the Riesz potential on polygonal boundaries and show that the analysis of the mapping properties of this operator reduces to a thorough study in the neighbourhood of corners. Subsequently, in Section \ref{Chap8:sec:4} we provide a brief recap of the Mellin transform which can be seen as a counterpart of the Fourier transform that is adapted to corner geometries, and we recall how to characterize Sobolev trace spaces by means of the Mellin transform. In Section \ref{Chap:8:sec:5} we perform a detailed study of the Mellin symbols of the Riesz potential following which we deduce, in Section \ref{Chap:8:sec:6}, the required continuity estimates for the Riesz potential on boundaries containing corners.

	\section{Trace spaces}\label{Chap:8:sec:2}
	We start with classical considerations related to the functional analysis
	of trace spaces. For any open, connected set $\Omega\subset \RR^2$ with Lipschitz boundary $\partial \Omega$, and any closed subset $\Gamma\subset\partial\Omega$, we shall consider
	the space $\mH^{1/2}(\Gamma)$ as the completion of the space
	$\mathscr{C}^{\infty}(\Gamma):=\{ \varphi\vert_{\Gamma},\;
	\varphi\in \mathscr{C}^{\infty}(\RR^2)\}$ for the norm given by $\Vert \varphi
	\Vert_{\mH^{1/2}(\Gamma)}^2:=\Vert \varphi\Vert_{\mL^2(\Gamma)}^2+
	\vert \varphi\vert_{\mH^{1/2}(\Gamma)}^2$ where we use the so-called
	Sobolev-Slobodeckii semi-norm \cite[Chap.3]{zbMATH01446717} given by the formula  
	\begin{equation*}
		\vert \varphi\vert_{\mH^{1/2}(\Gamma)}^{2}=\int_{\Gamma\times\Gamma}
		\frac{\vert \varphi(\bx) - \varphi(\by)\vert^2}{\vert \bx-\by\vert^2}
		\;d\bx d\by.
	\end{equation*}
	We shall also consider the space
	$\tilde{\mH}^{1/2}(\Gamma):=\{ v\in \mH^{1/2}(\partial\Omega),\; v = 0\;\text{on}\;
	\partial\Omega\setminus\Gamma\}$. We emphasize that $\tilde{\mH}^{1/2}(\Gamma)$ is a closed subspace of
	$\mH^{1/2}(\partial\Omega)$ under the norm $\Vert \cdot\Vert_{\mH^{1/2}(\partial\Omega)}$.
	On the other hand the set $\{\varphi\vert_{\Gamma}, \varphi\in \tilde{\mH}^{1/2}(\Gamma)\}$ is
	a subspace of $\mH^{1/2}(\Gamma)$ which is \textit{not} closed with respect to the norm
	$\Vert \cdot\Vert_{\mH^{1/2}(\Gamma)}$. Of particular interest to us in the sequel will
	be the case where $\Gamma = \RR$ or $\Gamma = \RR_+ := \lbr0,+\infty)$. For any $u\in
	\tilde{\mH}^{1/2}(\RR_+)$, denoting $v:=u\vert_{\RR_+}$,  a straightforward calculus yields
	\begin{equation}\label{eq:norms_2}
		\begin{aligned}
			& \vert u\vert_{\tilde{\mH}^{1/2}(\RR_+)}^2:=  \vert u \vert_{\mH^{1/2}(\RR)}^2 =
			\vert v \vert_{\mH^{1/2}(\RR_+)}^2+2\int_0^{\infty}\vert v(x)\vert^2 \;\frac{dx}{x},\\
			& \Vert u\Vert_{\tilde{\mH}^{1/2}(\RR_+)}^2 := \vert u\vert_{\tilde{\mH}^{1/2}(\RR_+)}^2 +
			\Vert u\Vert_{\mL^2(\RR_+)}^2.
		\end{aligned}
	\end{equation}
	Clearly, we have $\Vert u\Vert_{\mH^{1/2}(\RR_+)}\leq \Vert u\Vert_{\tilde{\mH}^{1/2}(\RR_+)}$ for any
	$u\in \tilde{\mH}^{1/2}(\RR_+)$ by construction.  Let us remark in addition that we will take $\mH^0(\RR_+) = \mL^2(\RR_+)$ as a convention (and similarly
	for $\mH^0(\Gamma)$ and $\tilde{\mH}^0(\RR_+)$). 
	
	\noindent Next, given any open set $\Gamma \subset \partial \Omega$,
	we will frequently use the notation $\langle\cdot, \cdot \rangle_{\Gamma}$ to denote the usual $L^2$
	inner product of square-integrable functions defined on $\Gamma$, which extends as duality pairing
	between $\mH^{1/2}(\Gamma)$ and its dual space $\mH^{1/2}(\Gamma)^*$.  With $\Omega$ and $\Gamma$
	as above, we define $\tilde{\mH}^{-1/2}(\Gamma):=\mH^{1/2}(\Gamma)^*$ and
	$\mH^{-1/2}(\Gamma):=\tilde{\mH}^{1/2}(\Gamma)^*$ and consider the naturally associated dual norms
	\begin{equation}
		\begin{aligned}
			& \Vert p\Vert_{\tilde{\mH}^{-1/2}(\Gamma)}:=\sup_{v\in \mH^{1/2}(\Gamma)\setminus\{0\}}
			\vert \langle p,v\rangle_{\Gamma}\vert/\Vert v\Vert_{\mH^{1/2}(\Gamma)},\\
			& \Vert q\Vert_{\mH^{-1/2}(\Gamma)}:=\sup_{v\in \tilde{\mH}^{1/2}(\Gamma)\setminus\{0\}}
			\vert \langle q,v\rangle_{\Gamma}\vert/\Vert v\Vert_{\tilde{\mH}^{1/2}(\Gamma)}.
		\end{aligned}
	\end{equation}
	It is easy to see that any $p\in \tilde{\mH}^{-1/2}(\RR_+)$ induces an element of $\mH^{-1/2}(\RR_+)$
	with $\Vert p\Vert_{\mH^{-1/2}(\RR_+)}\leq \Vert p\Vert_{\tilde{\mH}^{-1/2}(\RR_+)}$. 
	
	Finally, we will make regular use of the space
	\begin{equation*}
		\mathscr{C}^\infty_0(\RR_+):=\{\varphi\in\mathscr{C}^\infty(\RR_+)\vert\;
		\text{with bounded}\;\mrm{supp}(\varphi) \subset(0,+\infty)\;\},
	\end{equation*}
	which is dense in each of the spaces $\mH^{\pm 1/2}(\RR_+),
	\tilde{\mH}^{\pm 1/2}(\RR_+)$ equipped with its respective norm (see, e.g., \cite{zbMATH01446717}). We will rely on this density result to make calculus more explicit. Occasionally, for any open set $\Omega\subset \RR^d, d=1,2$, we shall also refer to the space $\mathscr{C}^\infty_c(\Omega):=\{\,\varphi\vert_{\Omega}, \;\varphi\in\mathscr{C}^\infty(\RR^d), \;\mrm{supp}(\varphi)\;\text{bounded}\,\}$.

	\section{Riesz potentials on polygonal boundaries}\label{Chap:8:sec:3}
	
	We will now reduce the scope of the subsequent analysis by assuming that
	$\Omega \subset \RR^{2}$ is a bounded polygonal domain. We define the 
	Riesz potential on $\partial \Omega$ as a map $\mathscr{A} \colon {\mathscr{C}}^{\infty}
	(\partial\Omega) \rightarrow \mathscr{C}^{\infty}(\partial\Omega)^*$ that satisfies
	\begin{equation}\label{RieszPot}
		\langle \mathscr{A}(u), v \rangle_{\partial\Omega}:=\int_{\partial\Omega}\int_{\partial\Omega}
		\frac{u(\by) v(\bx)}{\sqrt{\vert\bx-\by\vert}}\, d \sigma(\bx) d \sigma(\by).
	\end{equation}
	The main topic of the present contribution is a fine analysis of this operator. 
	As mentioned in the introduction, such operators have been studied in detail on
	flat spaces in \cite[Chapter V]{zbMATH03329342} where an explicit expression of
	the Fourier symbol is provided. Analyses on the continuity properties of such operators
	can also be found in \cite{zbMATH06052864, zbMATH06281692} for the case of smooth surfaces. Here, we are specifically interested in the case of polygonal, a priori non-smooth
	surfaces. The main result of this article is to establish the following theorem.\vspace{5mm}
	
	\begin{theorem}\label{thm:1}~
		If $\Omega\subset \RR^2$ is a bounded polygonal domain, then the map $\mathscr{A}$ defined
		by \eqref{RieszPot} extends as a bounded linear operator from $ \mL^{2}(\partial\Omega)$
		into $ \mH^{1/2}(\partial\Omega)$, i.e.
		\begin{equation}\label{MappingProp}
			\sup_{u,v\in \mathscr{C}^{\infty}(\partial\Omega)\setminus\{0\}}
			\frac{\vert\langle \mathscr{A}(u), v \rangle_{\partial\Omega}\vert}{\Vert u \Vert_{ \mL^2(\partial \Omega)}
				\Vert v \Vert_{ \mH^{-1/2}(\partial\Omega)}}<+\infty.
		\end{equation}
	\end{theorem}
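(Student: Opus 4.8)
The plan is to localise the estimate to the corners of $\partial\Omega$ by a partition of unity and, there, to exploit the exact $(-1/2)$-homogeneity of the kernel $|\bx-\by|^{-1/2}$ through the Mellin transform. First I would cover $\partial\Omega$ by finitely many open sets, each either a \emph{smooth patch} (on which $\partial\Omega$ is a straight segment) or a \emph{corner patch} (a neighbourhood of a single vertex), take a subordinate smooth partition of unity $1=\sum_j\chi_j$, and expand $\langle\mathscr A(u),v\rangle_{\partial\Omega}=\sum_{j,k}\langle\mathscr A(\chi_j u),\chi_k v\rangle_{\partial\Omega}$; refining the cover, one may assume that for every pair $(j,k)$ either $\mrm{dist}(\supp\chi_j,\supp\chi_k)>0$ or $\supp\chi_j\cup\supp\chi_k$ lies in a single patch. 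For well-separated pairs the restricted kernel is smooth and bounded, so that term is trivially controlled; for pairs inside a smooth patch one recognises, in arc-length coordinates, a cut-off of the convolution $f\mapsto|x|^{-1/2}*f$ on $\RR$, whose Fourier symbol is a multiple of $|\xi|^{-1/2}$, hence bounded $\mL^2\to\mH^{1/2}$, the cut-offs and the restriction to the patch being harmless. All the substance therefore lies in the corner patches.

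At a vertex with interior angle $\omega\in(0,2\pi)$ I would parametrise the two incident edges by arc length, identifying each with $\RR_+$ and the vertex with $0$; if $\bx$ lies at distance $x$ on one edge and $\by$ at distance $y$ on the other, then $|\bx-\by|^2=x^2+y^2-2xy\cos\omega$. Writing the traces of $u$ and $v$ on the two edges as $(u_1,u_2)$ and $(v_1,v_2)$, the localised bilinear form becomes a $2\times2$ system whose integral kernels are $|x-y|^{-1/2}$ on the diagonal and $(x^2+y^2-2xy\cos\omega)^{-1/4}$ off the diagonal; both are homogeneous of degree $-1/2$ on $\RR_+\times\RR_+$. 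A kernel $k$ homogeneous of degree $-1/2$ acts, under the Mellin transform in the radial variable, as multiplication by the symbol $m(\lambda)=\int_0^\infty k(1,r)\,r^{\lambda-1/2}\,dr$ composed with a half-unit shift: $\mathcal M(Kf)(\lambda)=m(\lambda)\,\mathcal M f(\lambda-\tfrac12)$ (up to the normalisation of Section~\ref{Chap8:sec:4}). This shift is precisely the device by which the operator gains half a derivative: it carries the vertical line supporting the Mellin--Plancherel identity for $\mL^2(\RR_+)$ onto the line which, with the appropriate weight, supports the $\mH^{1/2}$-norm near the vertex, and the two incident edges then reassemble into a function in $\mH^{1/2}$ of the arc through the vertex.

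The core of the argument, and the step I expect to cost the most, is the study (carried out in Section~\ref{Chap:8:sec:5}) of the scalar symbols
\[
\nu(\lambda)=\int_0^\infty|1-r|^{-1/2}\,r^{\lambda-1/2}\,dr,\qquad
\mu_\omega(\lambda)=\int_0^\infty(1-2r\cos\omega+r^2)^{-1/4}\,r^{\lambda-1/2}\,dr.
\]
One has to show that each is holomorphic in the strip $-\tfrac12<\mrm{Re}\,\lambda<0$, continues meromorphically with simple poles at $\lambda=0$ and $\lambda=-\tfrac12$ (for $\omega=\pi$ the closed form $\mu_\pi(\lambda)=\Gamma(\lambda+\tfrac12)\Gamma(-\lambda)/\Gamma(\tfrac12)$, and the analogous Gamma-function expression for $\nu$, make the pole structure transparent), and --- crucially --- decays at least like $|\mrm{Im}\,\lambda|^{-1/2}$ on every interior vertical line as $|\mrm{Im}\,\lambda|\to\infty$. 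For $\mu_\omega$ this decay is in fact of arbitrary polynomial order, because when $\omega\notin\{0,2\pi\}$ the amplitude $(1-2r\cos\omega+r^2)^{-1/4}$ is smooth on $(0,\infty)$ and one may integrate by parts indefinitely; for $\nu$ the rate $|\mrm{Im}\,\lambda|^{-1/2}$ is sharp and originates in the $|1-r|^{-1/2}$ singularity of the integrand at $r=1$ --- it is the Mellin face of the Fourier symbol $|\xi|^{-1/2}$. Uniformity over the finitely many corner angles of the given polygon is automatic, since they lie in a compact subset of $(0,2\pi)$; the constants degenerate only as $\omega\to0$ or $\omega\to2\pi$.

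To conclude, one combines $\mathcal M(Kf)(\lambda)=m(\lambda)\,\mathcal M f(\lambda-\tfrac12)$ with the boundedness and $|\mrm{Im}\,\lambda|^{-1/2}$-decay of $m$ and with the Mellin characterisations of $\mL^2(\RR_+)$ and of $\mH^{1/2}$ near a vertex (Section~\ref{Chap8:sec:4}) to obtain the $\mL^2\to\mH^{1/2}$ bound for the localised corner operators. Here some care is needed: the compact support furnished by the cut-offs is used both to control $\mathcal M f$ on the shifted line and to discard the pole of $m$ at $\lambda=-\tfrac12$, which merely reflects the $x^{-1/2}$ decay of $Kf$ at infinity; the remaining pole at $\lambda=0$ is admissible for $\mH^{1/2}$ (though not for $\tilde{\mH}^{1/2}$), and by the symmetry of the $2\times2$ symbol its residue produces the same constant value of $\mathscr A(u)$ along both edges at the vertex, so the two half-line contributions glue into a genuine element of $\mH^{1/2}$ of the arc --- consistent with the global target $\mH^{1/2}(\partial\Omega)$, the boundary being a closed curve. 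Summing the finitely many smooth-patch and corner-patch contributions then yields \eqref{MappingProp}.
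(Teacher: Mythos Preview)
Your strategy is correct and parallels the paper's: localise, dispatch smooth and separated pairs trivially, and handle corners via the Mellin transform and the exact $(-\tfrac12)$-homogeneity. The organisation of the corner analysis, however, differs in one substantive respect. The paper never analyses your diagonal symbol $\nu$ (the Mellin transform of the same-edge kernel $\mathfrak K_0$, which carries the $|1-r|^{-1/2}$ singularity and only the borderline $|\mrm{Im}\,\lambda|^{-1/2}$ decay). Instead, after passing to the symmetric/antisymmetric decomposition via $\Theta_\Gamma$ (your ``gluing'' is exactly this, in disguise), it writes the corner operator as the \emph{flat} operator $\mathscr{A}_\RR$ plus a remainder whose symbol is $\widehat{\mathfrak K}_{\pi/2}-\widehat{\mathfrak K}_\theta$. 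The flat piece is then handled by the classical $\RR$-result as a black box, while the remainder involves only the kernels $\mathfrak K_\alpha$ with $\alpha\in(0,\pi)$, which are \emph{smooth} on $\RR_+$; their Mellin symbols therefore decay super-polynomially on vertical lines (Corollary~\ref{cor:1}), and their simple poles at $\pm\tfrac14$ cancel in the difference because the residues are $\alpha$-independent (Propositions~\ref{prop:1}--\ref{prop:2}). This buys the stronger conclusion that the remainder maps $\mL^2$ into $\tilde{\mH}^{1/2}(\RR_+)$, with no output cut-off needed (Lemma~\ref{lem:final}).

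Your direct route also goes through, but with two refinements you already anticipate. First, the output cut-off is genuinely needed to absorb the $x^{-1/2}$ tail of $\mathscr{A}_\theta^+(\chi u)$ (the pole you flag ``at $-\tfrac12$''), since without it the image is not even in $\mL^2(\RR_+)$. Second, the ``same constant value at the vertex'' heuristic should be made precise as the statement that the \emph{antisymmetric} symbol $\nu-\mu_\omega$ is pole-free on the critical line (the residues of $\nu$ and $\mu_\omega$ at the strip boundary coincide, for the same reason as in Proposition~\ref{prop:2}); this is what delivers the $\tilde{\mH}^{1/2}$ bound for the odd part required by the characterisation $\mH^{1/2}(\Gamma)\simeq\mH^{1/2}(\RR_+)\times\tilde{\mH}^{1/2}(\RR_+)$ of Lemma~\ref{EquivalentNorms}. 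In summary: your approach is more self-contained (it does not import the flat-space estimate) but relies on the sharp decay of $\nu$; the paper trades that sharper symbol analysis for a softer one by subtracting the flat case first.
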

	
	\quad\\
	Of course the main difficulty of the proof lies in the analysis of
	the mapping properties of $\mathscr{A}$ at corners. We shall thus
	decompose the proof into two steps.
	In the first step, we consider a decomposition of the boundary
	$\partial \Omega$ and define an appropriate partition of unity.
	This will allow us to study localisations of the operator $\mathscr{A}$, which are simple to analyse in cases
	where they do not contain a corner point of $\partial \Omega$.
	In the second step, we analyse localisations of~$\mathscr{A}$ in the
	presence of corner points of $\partial \Omega$. We shall see that the Mellin
	transform appears as an appropriate tool for this analysis.

	\subsection{Localisation of the problem} \label{Chap:8:sec:3a}
	
	\begin{figure}[h]
\centering
\includegraphics{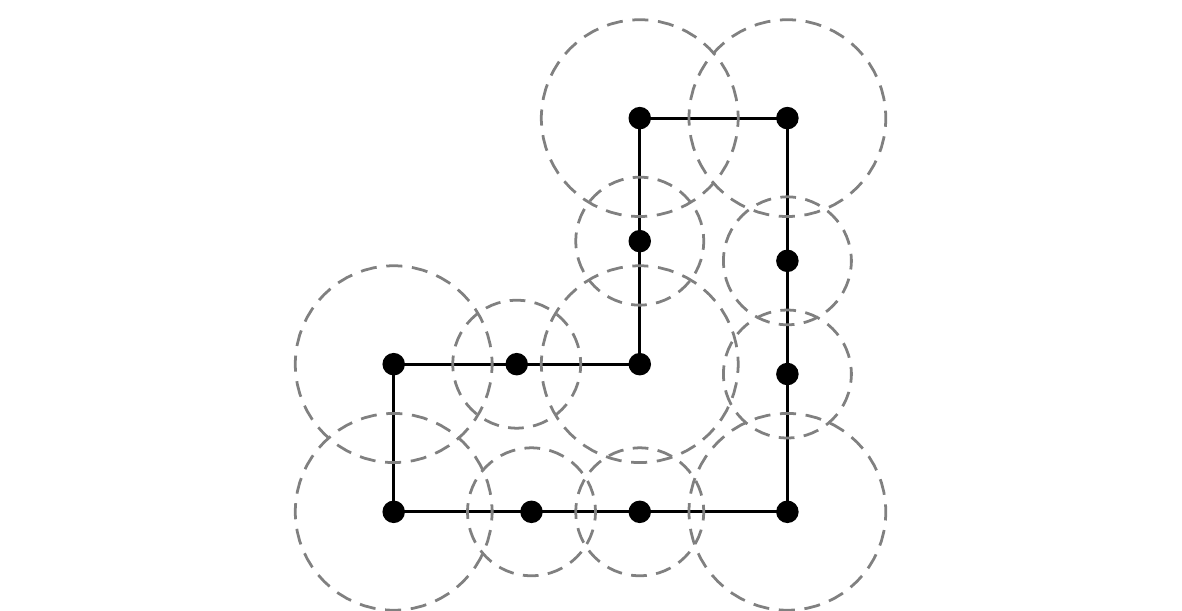}
		\caption{An example of the polygonal domain $\Omega$ with a decomposition of the boundary $\partial \Omega$ using a finite cover $\mathscr{P}$ of open disks.}
	\end{figure}

	\noindent 
	We now propose a simple decomposition of the polygonal boundary $\partial \Omega$ and a partition of unity associated with this decomposition.\vspace{5mm}
	
	\begin{Notation}[Partition of Unity]\label{def:decomp}~
		We denote by $\mathscr{P}$ a finite collection of open disks $\mD\subset \RR^{2}$ with centre $\bc_{\mD}\in \partial\Omega$
		such that 
		\begin{itemize}
			\item $\mathscr{P}$ is an open cover of the boundary $\partial \Omega$, i.e., $\partial\Omega\subset \cup_{\mD\in \mathscr{P}}\mD$;
			
			\item Each corner $\bc$ of $\partial\Omega$ is the centre of a disk, i.e., $\bc = \bc_{\mD}$ for some $\mD\in \mathscr{P}$;
			
			\item Each such corner belongs to the closure of exactly one disk, i.e., $\bc= \bc_{\mD}\notin \overline{\mD}'$ for $\mD'\in\mathscr{P}\setminus\{\mD\}$.
		\end{itemize}
		
		Moreover, given  $\mD \in \mathscr{P}$, we define $\Gamma_{\mD}:= \mD\cap \partial\Omega$, and we denote by $\chi_{\mD}\in \mathscr{C}^{\infty}(\RR^{2})$ a \textit{smooth} function that satisfies $\mrm{supp}(\chi_{\mD})\subset \mD$ and
		$\sum_{\mD\in\mathscr{P}}\chi_{\mD}(\bx) = 1\;\forall\bx \in \partial \Omega$. Finally, in case $\mD \in \mathscr{P}$ is a `corner' disk, i.e., if $\bc_{\mD}= \bc$  for some corner $\bc$, then we assume without loss of generality that $\chi_{\mD}$ is radially symmetric with respect to $\bc_{\mD}$, i.e., $\chi_{\mD}(\bx) =\chi^{\mathscr{R}}_{\mD}(\vert\bc_{\mD} - \bx\vert) $ for some radial function $\chi^{\mathscr{R}}_{\mD} \in C_{\comp}^{\infty}(\RR^2)$ and all $\bx \in \RR^2$.  This last assumption has an important use in Section~\ref{Chap:8:sec:3b}.
	\end{Notation}

	\noindent 
	The finite cover $\mathscr{P}$ introduced above is not uniquely determined. We shall
	assume that it is fixed once and for all for the remainder of the present article.
	Equipped with this convention, using the linearity of the Riesz potential we may write
	for all $u,v\in \mathscr{C}^{\infty}(\partial \Omega)$
	\begin{equation}\label{eq:loc}
		\begin{array}{l}
			\langle\mathscr{A}(u), v\rangle_{\partial\Omega} = \sum_{\mD,\mD'\in\mathscr{P}}
			\langle\mathscr{A}^{\chi}_{\mD,\mD'}(u), v\rangle_{\partial\Omega}\quad \text{where}\\[5pt]
			\langle\mathscr{A}^{\chi}_{\mD,\mD'}(u), v\rangle_{\partial\Omega}:=
			\langle\mathscr{A}(\chi_{\mD}u), \chi_{\mD'}v \rangle_{\partial\Omega}.
		\end{array}
	\end{equation}
	In Equation \eqref{eq:loc}, each operator $\mathscr{A}^{\chi}_{\mD,\mD'}$ is associated with the kernel
	$\mathfrak{A}_{\mD,\mD'}^{\chi}(\bx,\by):=\chi_{\mD}(\bx)\vert\bx-\by\vert^{-1/2}\chi_{\mD'}(\by)$,
	i.e., $\langle\mathscr{A}^{\chi}_{\mD,\mD'}(u), v\rangle_{\partial \Omega} = \int_{\partial\Omega\times\partial\Omega}
	\mathfrak{A}_{\mD,\mD'}^{\chi}(\bx,\by) u(\bx)v(\by)\; d\bx d\by$.
	
	\noindent Clearly, in order to prove that the mapping property \eqref{MappingProp} holds for the Riesz potential $\mathscr{A}$, it suffices to prove that the mapping property \eqref{MappingProp} holds
	for each localised operator $\mathscr{A}^{\chi}_{\mD,\mD'}$. More precisely, it suffices to prove that
	\begin{equation}\label{MappingProp2}
		\sup_{u,v\in \mathscr{C}^{\infty}(\partial \Omega)\setminus\{0\}}
		\frac{\vert\langle \mathscr{A}_{\mD,\mD'}^{\chi}(u), v
			\rangle_{\partial\Omega}\vert}{\Vert u \Vert_{ \mL^2(\partial \Omega)}
			\Vert v \Vert_{ \mH^{-1/2}(\partial\Omega)}}<+\infty.
	\end{equation}
	for each $\mD, \mD' \in \mathscr{P}$. As the following proposition shows, such an estimate
	only presents difficulties whenever $\mD = \mD'$ and $\mD$ is centred at a corner of the domain.\\
	
	\begin{proposition}\label{prop:first}
		Estimate \eqref{MappingProp2} holds if $\mD \neq \mD'$ or if $\mD = \mD'$ but $\mD$ is not centred at a corner of $\partial \Omega$.
	\end{proposition}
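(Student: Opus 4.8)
The plan is to split the kernel $\mathfrak{A}^{\chi}_{\mD,\mD'}$ into a singular part, supported near the diagonal, and a regular part, and to flatten the geometry near the singular part. The point that makes this work is the following geometric observation, valid under either hypothesis of the proposition: the compact set $\Sigma:=\supp(\chi_{\mD})\cap\supp(\chi_{\mD'})\cap\partial\Omega$ contains no corner of $\partial\Omega$, and moreover, for every corner $\bc$ of $\partial\Omega$, at least one of $\supp(\chi_{\mD})$, $\supp(\chi_{\mD'})$ lies at a strictly positive distance from $\bc$. Indeed, by Notation~\ref{def:decomp} each corner of $\partial\Omega$ lies in the closure of exactly one disk of $\mathscr{P}$, while the present hypotheses exclude the only configuration (namely $\mD=\mD'$ a corner disk) in which $\overline{\mD}$ and $\overline{\mD'}$ could simultaneously meet a common corner. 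Since $\Sigma$ contains no corner, each of its points has a relatively open neighbourhood in $\partial\Omega$ that is a \emph{straight} segment.

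With this in hand I would first dispatch the regular part. Fix $\theta\in\mathscr{C}^{\infty}_c(\RR^2)$ equal to $1$ on a neighbourhood of $\{(\bx,\bx):\bx\in\Sigma\}$ and supported so close to the diagonal that $\theta(\bx,\by)=0$ whenever $\bx$ and $\by$ lie on distinct edges of $\partial\Omega$, and write $\mathfrak{A}^{\chi}_{\mD,\mD'}=\theta\,\mathfrak{A}^{\chi}_{\mD,\mD'}+(1-\theta)\mathfrak{A}^{\chi}_{\mD,\mD'}$. On the support of $(1-\theta)\mathfrak{A}^{\chi}_{\mD,\mD'}$ one has $|\bx-\by|\geq c>0$: for the contributions with $\bx,\by$ on a common edge this is clear, while for those on distinct edges it follows from the distance statement above together with an elementary lower bound for $|\bx-\by|$ when $\bx,\by$ lie on two segments meeting at a corner. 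Hence this part of the kernel is smooth and compactly supported, the associated operator maps $\mL^2(\partial\Omega)$ into $\mathscr{C}^{\infty}(\partial\Omega)$-functions supported in $\Gamma_{\mD'}$ with norm $\lesssim\|u\|_{\mL^1(\partial\Omega)}\lesssim\|u\|_{\mL^2(\partial\Omega)}$, and \eqref{MappingProp2} holds trivially for it by the Cauchy--Schwarz inequality.

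Next I would treat the singular part $\theta\,\mathfrak{A}^{\chi}_{\mD,\mD'}$. Using a smooth partition of unity on $\partial\Omega$ subordinate to the finitely many edges met by $\Sigma$, and the fact that each such edge is straight, I would reduce — one edge at a time — to the situation where $\partial\Omega$ is replaced by a single line identified isometrically with $\RR$. After this identification and extension by zero, the operator becomes $u\mapsto\varphi_1\,\mathscr{A}_{\RR}(\varphi_2 u)$ with $\varphi_1,\varphi_2\in\mathscr{C}^{\infty}_c(\RR)$ and $\mathscr{A}_{\RR}g(x):=\int_{\RR}|x-y|^{-1/2}\,g(y)\,dy$ the flat Riesz potential of order $1/2$ on $\RR$, and it suffices to prove the estimate $\|\varphi_1\,\mathscr{A}_{\RR}g\|_{\mH^{1/2}(\RR)}\leq C\|g\|_{\mL^2(\RR)}$ for all $g$ supported in a fixed compact set. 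Indeed, \eqref{MappingProp2} then follows by duality, using that multiplication by a smooth compactly supported function is bounded on $\mH^{\pm1/2}$ and that the localisation and zero-extension maps involved are bounded between the relevant Sobolev spaces; the asymmetry between $u$ and $v$ is harmless since $\mathscr{A}^{\chi}_{\mD,\mD'}$ and $\mathscr{A}^{\chi}_{\mD',\mD}$ are formal transposes of one another.

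The crux is therefore the flat estimate, and its only delicate feature is the low-frequency behaviour. Recall from \cite[Chapter V]{zbMATH03329342} that $\mathscr{A}_{\RR}$ is the Fourier multiplier with symbol $c_0|\xi|^{-1/2}$, $c_0>0$; since this symbol is singular at $\xi=0$, $\mathscr{A}_{\RR}$ does \emph{not} map $\mL^2(\RR)$ into $\mH^{1/2}(\RR)$, and the output cut-off $\varphi_1$ is essential. To exploit it, fix $\psi\in\mathscr{C}^{\infty}_c(\RR)$ with $\psi\equiv1$ near the origin and split $c_0|\xi|^{-1/2}=m_1(\xi)+m_0(\xi)$ with $m_1:=(1-\psi)c_0|\xi|^{-1/2}$ and $m_0:=\psi\,c_0|\xi|^{-1/2}$. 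As $(1+|\xi|^2)^{1/4}\,m_1(\xi)$ is bounded on $\RR$, the operator $m_1(D)$ maps $\mL^2(\RR)$ boundedly into $\mH^{1/2}(\RR)$. On the other hand $m_0\in\mL^1(\RR)$ and $\xi^k m_0\in\mL^1(\RR)$ for every $k\in\mathbb{N}$, so the inverse Fourier transform $K_0$ of $m_0$ belongs to $\mathscr{C}^{\infty}(\RR)$ with all derivatives bounded; hence, for $g$ supported in a fixed compact set, $K_0*g\in\mathscr{C}^{\infty}(\RR)$ with $\|K_0*g\|_{\mathscr{C}^1(\supp\varphi_1)}\lesssim\|g\|_{\mL^1(\RR)}\lesssim\|g\|_{\mL^2(\RR)}$, so that $\|\varphi_1(K_0*g)\|_{\mH^1(\RR)}\lesssim\|g\|_{\mL^2(\RR)}$, and a fortiori the same bound in $\mH^{1/2}(\RR)$. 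Adding the two contributions yields the desired estimate. The main obstacle in this argument is precisely this low-frequency bookkeeping — which forces one to use the output localisation rather than bounding $\mathscr{A}_{\RR}$ globally — together with the routine but slightly technical geometric flattening near the diagonal; everything else is soft.
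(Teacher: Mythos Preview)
Your proof is correct and follows essentially the same strategy as the paper: isolate the diagonal singularity, show the off-diagonal remainder has a smooth kernel, and reduce the near-diagonal part to the flat Riesz potential on a line. The paper's execution is slightly more streamlined --- it uses a single-variable cutoff $\psi$ (equal to $1$ near $\Gamma_{\mD}\cap\Gamma_{\mD'}$ and vanishing on $\partial\Omega\setminus\Sigma$, where $\Sigma$ is the straight line containing this intersection) rather than your two-variable diagonal cutoff $\theta$, and then simply cites \cite[Chap.~V]{zbMATH03329342} for the flat estimate without further comment. Your explicit Fourier-multiplier splitting $c_0|\xi|^{-1/2}=m_0+m_1$ is actually more honest on this last point: the symbol $|\xi|^{-1/2}$ is \emph{not} an $\mL^2\to\mH^{1/2}$ multiplier globally, and the output cutoff $\varphi_1$ is genuinely needed to tame the low-frequency piece, something the paper's bare citation glosses over. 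One minor slip: your cutoff $\theta$ should be declared in $\mathscr{C}^\infty_c(\RR^2\times\RR^2)$ rather than $\mathscr{C}^\infty_c(\RR^2)$.
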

	
	\begin{proof}
		Take two disks $\mD, \mD' \in \mathscr{P}$. Estimate \eqref{MappingProp2} is
		clearly satisfied if $\overline{\mD}\cap\overline{\mD}'=\emptyset$ since, in this case,
		the associated kernel satisfies $\mathfrak{A}_{\mD,\mD'}^{\chi}\in \mathscr{C}^{\infty}
		(\RR^{2}\times\RR^{2})$. Therefore, it suffices to consider the case 
		\begin{equation}\label{NeighbouringCase}
			\begin{aligned}
				\overline{\mD}\cap\overline{\mD}'\neq \emptyset \text{ and }
				\text{$\overline{\mD}\cap\overline{\mD}'$ does not contain any
					corner of $\partial\Omega$.}
			\end{aligned}
		\end{equation}  
		
		\noindent  Condition \eqref{NeighbouringCase} corresponds either to the case $\mD=\mD'$ when $\mD$ is not centred at a corner of~$\partial\Omega$, or to the case of two neighbouring disks
		which may or may not contain a corner. Regardless, we now show that in both cases the estimate \eqref{MappingProp2}
		follows from the continuity properties of Riesz potentials in flat spaces,
		as discussed in \cite[Chapter V]{zbMATH03329342}. 
		
		To this end, notice that under Condition \eqref{NeighbouringCase}, there exists a straight infinite line $\Sigma\subset \RR^{2}$
		such that $\Gamma_{\mD}\cap\Gamma_{\mD'}\subset \Sigma $. Let $\psi\in
		\mathscr{C}^{\infty}(\RR^{2})$ be a cut-off function with the property that $\psi = 1$
		on a neighbourhood of $\Gamma_{\mD}\cap\Gamma_{\mD'}$ and $\psi=0$ on $\partial \Omega
		\setminus \Sigma$. We define the functions $\psi_{\mD} := \psi\chi_{\mD}$ and
		$\psi_{\mD'} := \psi\chi_{\mD'}$, and we define the integral kernel
		$\mathfrak{A}_{\mD,\mD'}^{\psi}(\bx,\by):=\psi_{\mD}(\bx)\vert\bx-\by\vert^{-1/2}\psi_{\mD'}(\by)$.
		It follows that for all $u, v \in \mathscr{C}^{\infty}(\partial \Omega)$ we have
		\begin{equation} \label{DecompositionKernel}
			\begin{array}{l}
				\langle \mathscr{A}^{\chi}_{\mD,\mD'}(u), v\rangle_{\partial\Omega} =
				\langle (\mathscr{A}^{\chi}_{\mD,\mD'}-\mathscr{A}^{\psi}_{\mD,\mD'})u, v\rangle_{\partial\Omega}
				+ \langle \mathscr{A}^{\psi}_{\mD,\mD'}(u), v\rangle_{\partial\Omega},\\[5pt]
				\text{where}\quad \langle\mathscr{A}^{\psi}_{\mD,\mD'}(u), v\rangle_{\partial\Omega}:=
				\int_{\partial\Omega\times \partial\Omega}\mathfrak{A}_{\mD,\mD'}^{\psi}(\bx,\by) u(\bx)v(\by) \;d\bx d\by.
			\end{array}
		\end{equation}
		
		\noindent    We now estimate each term of the right hand side above. In order to estimate the
		first term, we recall that the partition of unity functions $\chi_{\mD}$ and
		$\chi_{\mD'}$ are supported in the disks $\mD$ and $\mD'$ respectively so the
		integral kernel $\mathfrak{A}_{\mD,\mD'}^{\chi} - \mathfrak{A}_{\mD,\mD'}^{\psi}$ can only
		be singular on the set ${\mD}\cap{\mD'} \times {\mD}\cap{\mD'}$. On the other hand,
		the definition of the cutoff function $\psi \in \mathscr{C}^{\infty}(\RR^{2})$ implies
		that $\psi_{\mD}(\by)\psi_{\mD'}(\bx)$ coincides with $\chi_{\mD}(\by)\chi_{\mD'}(\bx)$
		on a neighbourhood of $\Gamma_{\mD}\cap\Gamma_{\mD'}$. We can therefore conclude that the integral kernel
		$\mathfrak{A}_{\mD,\mD'}^{\chi} - \mathfrak{A}_{\mD,\mD'}^{\psi}$ is infinitely smooth on
		$\RR^2 \times \RR^2$, and as a consequence there exists a
		constant $C>0$ such that for all $u,v\in\mathscr{C}^{\infty}(\partial \Omega)$ it holds that
		\begin{equation}\label{Estimate01}
			\vert\langle (\mathscr{A}^{\chi}_{\mD,\mD'}-\mathscr{A}^{\psi}_{\mD,\mD'})u, v \rangle_{\partial\Omega}
			\vert\leq C\Vert u\Vert_{ \mL^{2}(\partial\Omega)}\Vert v\Vert_{ \mH^{-1/2}(\partial\Omega)}.
		\end{equation}  
		Next, observe that $(\partial\Omega\times \partial\Omega)\cap \mrm{supp}
		(\mathfrak{A}_{\mD,\mD'}^{\psi}) =(\Sigma\times\Sigma)\cap \mrm{supp}(\mathfrak{A}_{\mD,\mD'}^{\psi})$.
		Considering therefore $u,v\in \mathscr{C}^{\infty}(\partial \Omega)$, the second term in (\ref{DecompositionKernel})
		can be written as
		\begin{align*}
			\langle\mathscr{A}^{\psi}_{\mD,\mD'}(u), v \rangle_{\partial\Omega}=
			\int_{\Sigma\times\Sigma}\mathfrak{A}_{\mD,\mD'}^{\psi}(\bx,\by) v(\bx)u(\by)\; d\bx d\by.
		\end{align*}  
		The expression on the right hand side above only depends on the traces $u,v$ restricted
		to the infinite line $\Sigma$. Hence, according to the continuity
		properties of Riesz potentials stated in \cite[Chapter V]{zbMATH03329342}, we conclude that there exist
		constants $C,C'>0$ such that 
		\begin{equation}\label{Estimate02}
			\begin{aligned}
				\vert\langle\mathscr{A}^{\psi}_{\mD,\mD'}(u), v \rangle_{\partial\Omega}\vert
				& \leq C\Vert \psi_{\mD}u\Vert_{ \mL^{2}(\Sigma)}\Vert \psi_{\mD'}v\Vert_{ \mH^{-1/2}(\Sigma)}\\
				& = C\Vert \psi_{\mD}u\Vert_{ \mL^{2}(\partial\Omega)}\Vert \psi_{\mD'}v\Vert_{ \mH^{-1/2}(\partial\Omega)}\\
				& \leq C C'\Vert u\Vert_{ \mL^{2}(\partial\Omega)}\Vert v\Vert_{ \mH^{-1/2}(\partial\Omega)}.
			\end{aligned}
		\end{equation}
		In the estimate above we have used the fact that $\supp(\psi_{\mD})\cap \Sigma = \supp(\psi_{\mD})\cap \partial\Omega$ (and similarly for $\psi_{\mD'}$) so that $\Vert \psi_{\mD}u\Vert_{ \mL^{2}(\Sigma)} = \Vert \psi_{\mD}u\Vert_{ \mL^{2}(\partial\Omega)}$ and $\Vert \psi_{\mD'}v\Vert_{ \mH^{-1/2}(\Sigma)} = \Vert \psi_{\mD'}v\Vert_{ \mH^{-1/2}(\partial\Omega)}$. Combining the bounds  (\ref{Estimate01})-(\ref{Estimate02}) with Equation (\ref{DecompositionKernel}), we obtain that the estimate \eqref{MappingProp2} indeed holds under Condition~\eqref{NeighbouringCase}.\end{proof}
	
	It therefore remains to prove Estimate \eqref{MappingProp2} in the case where $\mD=\mD'$ and the disk $\mD$ contains
	a corner of $\partial \Omega$. As mentioned previously, this proof is non-trivial and requires
	a careful study of the Riesz kernel at corners.

	\subsection{Description of corner operators} \label{Chap:8:sec:3b}
	
	Throughout this subsection, we assume that $\mD \in \mathscr{P}$ is a disk that
	is centred at a corner $\bc \in \partial \Omega$. We will now introduce a parameterisation of this corner. To this end, we denote by $\be_{\pm}\in\RR^{2}$ the two
	unit vectors tangent to $\partial\Omega$
	at $\bc$ with the convention that both unit vectors $\be_{\pm}$ point outwards
	from the corner $\bc$. Moreover, we define the rays $\Gamma_{\pm} \subset \RR^2$
	and the conic surface $\Gamma \subset \RR^2$ as 
	\begin{align*}
		\Gamma_{\pm}:=\{ \bc+t\,\be_{\pm},~ t> 0\} \qquad \text{and} \qquad \Gamma :=
		\overline{\Gamma}_{-}\cup \overline{\Gamma}_{+}.  
	\end{align*}

	\begin{figure}[t]
\includegraphics{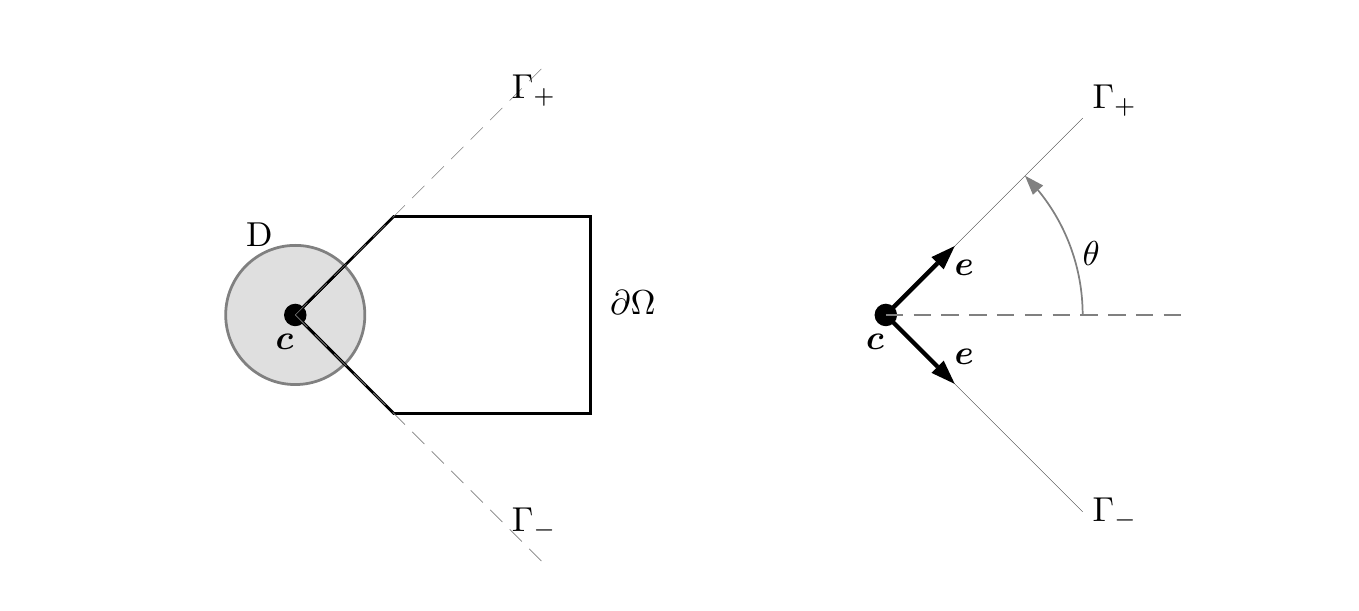}
		\caption{An example of a corner $\bc \in \partial \Omega$ with unit vectors
			$\be_{\pm}\in\RR^{2}$ describing the neighbouring edges.}
		\label{fig:2}
	\end{figure}
	
	An example of this geometric construction is displayed in Figure \ref{fig:2}. Notice that the case where $\be_{-}=-\be_{+}$ corresponds to a dummy corner, i.e., in this situation
	$\Gamma$ is obviously flat and $\theta = \pi/2$. 
	
	Our goal is now to study in more detail the operator $\mathscr{A}_{\mD, \mD}^{\chi}$
	defined through Equation \eqref{eq:loc}. Observe that $\mD\cap \partial\Omega = \mD\cap \Gamma$ by construction, so that
	\begin{equation}
		\begin{aligned}\label{CornerBilinearForm0}
			& \langle \mathscr{A}_{\mD,\mD}^{\chi}(u),v\rangle_{\partial\Omega} =
			\langle \mathscr{A}_{\Gamma}(\chi_{\mD}u),\chi_{\mD}v\rangle_{\Gamma},\\
			& \text{where}\;\; \langle \mathscr{A}_{\Gamma}(u),v\rangle_{\Gamma}:=\int_{\Gamma\times\Gamma}
			\frac{u(\by)v(\bx)}{\sqrt{\vert \bx-\by\vert}}\, d\bx d\by,
		\end{aligned}
	\end{equation}
	where we have introduced the operator $\mathscr{A}_{\Gamma}: \mathscr{C}^{\infty}_{\comp}(\Gamma) \to \mathscr{C}_{\comp}^{\infty}(\Gamma)^*$. 
	
	To obtain the mapping property at corners we are looking for, we need to prove the finiteness of the following continuity modulus\footnote{We emphasise here that due to the presence of the cutoff function $\chi_{\mD}$, the continuity modulus~\eqref{OperatorBound} is \emph{not} the operator norm of $\mathscr{A}_{\Gamma}$ when viewed as an operator from $\mL^2(\Gamma)\to\mH^{1/2}(\Gamma)$. It is rather the operator norm of $\chi_{\mD} \mathscr{A}_{\Gamma}\chi_{\mD}\colon \mL^2(\Gamma)\to\mH^{1/2}(\Gamma)$ with $\chi_{\mD}$ considered a  multiplicative operator. A similar remark applies to \eqref{eq:Flat}.} associated with the operator $\mathscr{A}_{\Gamma}$: 
	\begin{equation}\label{OperatorBound}
		\begin{aligned}
			& \Vert \mathscr{A}^{\chi}_{\Gamma}: \mL^2(\Gamma)\to\mH^{1/2}(\Gamma)\Vert:=\\
			&\quad \sup_{u,v\in \mathscr{C}^{\infty}_{\comp}(\Gamma)\setminus\{0\}}
			\frac{\vert \langle \mathscr{A}_{\Gamma}(\chi_{\mD} u),\chi_{\mD} v\rangle_{\Gamma}\vert}{
				\Vert u\Vert_{\mL^{2}(\Gamma)}\Vert v\Vert_{\mH^{-1/2}(\Gamma)}}.
		\end{aligned}
	\end{equation}

	As a first remark, we claim that there is a special situation where this continuity modulus is easy to bound.
	Consider indeed the case where $\theta = \pi/2$ so that $\Gamma = \{0\}\times \RR$.
	This is the previously mentioned situation of a dummy corner. To avoid tedious notation, we denote $  \mathscr{A}_\RR:=\mathscr{A}_{\{0\}\times \RR}$.
	Similarly we also write $\mH^s(\RR)$ (resp. $\tilde{\mH}^s(\RR)$, $\mL^2(\RR)$)
	instead of $\mH^s(\{0\}\times \RR)$ (resp. $\tilde{\mH}^s(\{0\}\times\RR)$, $\mL^2(\{0\}\times\RR)$), which
	should not raise any confusion. Since $\RR$ is obviously flat, the operator $\mathscr{A}_\RR$ is nothing but a
	classical Riesz potential operator. It follows that
	\begin{equation}\label{eq:Flat}
		\begin{aligned}
			& \Vert \mathscr{A}^{\chi}_\RR:\mL^2(\RR)\to \mH^{1/2}(\RR)\Vert:=\\
			& \quad \sup_{u,v\in \mathscr{C}^{\infty}_{\comp}(\RR)\setminus\{0\}}
			\frac{\vert \langle \mathscr{A}_{\RR}(\chi_{\mD} u), \chi_{\mD} v\rangle_{\RR}\vert}{
				\Vert u\Vert_{\mL^{2}(\RR)}\Vert v\Vert_{\mH^{-1/2}(\RR)}}<+\infty.
		\end{aligned}
	\end{equation}
	The boundedness property above is a clear consequence of the mapping properties of the Riesz potential
	operator in flat spaces as can be found in, e.g., \cite[Chap.V]{zbMATH03329342}, together with the fact that $\chi_{\mD}$ is smooth with bounded support. In the sequel, we shall exploit the mapping property \eqref{eq:Flat}, comparing the "flat potential" $\mathscr{A}_\RR$ with the operator $\mathscr{A}_\Gamma$ under investigation. 
	
	In order to tackle the case of a `true' corner, i.e., when $\Gamma \neq \{0\} \times \mathbb{R}$, we proceed as follows: For a pair of functions $p,\tilde{p}\in\mathscr{C}^\infty_0(\RR_+)$, we define a function
	$\Theta_\Gamma(p,\tilde{p}) \in \mathscr{C}^\infty(\Gamma)$ by the formula 
	\begin{equation}\label{eq:notation}
		\Theta_\Gamma(p,\tilde{p}) (\bc+t\be_{\pm}):= \frac{p(t)\pm \tilde{p}(t)}{\sqrt{2}}\quad \forall t>0,
	\end{equation}
	with the convention that we denote $\Theta_\RR:=\Theta_{\{0\}\times \RR}$.
	
	Let us now examine how the operator $\mathscr{A}_{\Gamma}$ is transformed under the action of this map. In fact using the map $\Theta_\Gamma$,  the operator $\mathscr{A}_{\Gamma}$ on the conic surface ${\Gamma}$ can be written as a combination of multiplicative convolution operators on $\RR_+$, associated with appropriate kernels.\vspace{5mm}
	
	\begin{lemma}\label{lem:basic}
		Let $2\theta \in (0, 2\pi)$ denote the aperture angle of the conic surface $\Gamma$.
		For any $p,\tilde{p},q,\tilde{q}\in \mathscr{C}^{\infty}_{0}(\RR_+)$ we have
		\[\langle \mathscr{A}_{\Gamma}\Theta_\Gamma(p,\tilde{p}),\Theta_\Gamma(q,\tilde{q}) \rangle_{\Gamma} = 
		\langle \mathscr{A}_{\theta}^+(p),q \rangle_{\RR_+} +
		\langle \mathscr{A}_{\theta}^-(\tilde{p}),\tilde{q} \rangle_{\RR_+},\]
		where the operators $\mathscr{A}_{\theta}^\pm:\mathscr{C}^\infty_{0}(\RR_+)\to
		\mathscr{C}^\infty_{0}(\RR_+)^*$ are defined as
		\begin{equation}\label{eq:lem:basic2}
			\begin{aligned}      
				&\langle \mathscr{A}_{\theta}^{\pm}(p),q\rangle_{\RR_+}:=
				\int_{\RR_+\times \RR_+}
				\big(\;\mathfrak{K}_{0}(t/s)\pm \mathfrak{K}_{\theta}(t/s)\;\big) p(s)q(t)
				(st)^{-1/4} \;ds dt \\
				& \text{with}\quad \mathfrak{K}_{\alpha}(\tau):= (4\sin^2(\alpha)
				+(\sqrt{\tau}-1/\sqrt{\tau})^{2}\;)^{-1/4}, \qquad \alpha \in [0, 2\pi).
			\end{aligned}
		\end{equation}
	\end{lemma}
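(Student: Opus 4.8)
The plan is to establish the identity by a direct computation: one splits the double integral defining $\langle\mathscr{A}_\Gamma\Theta_\Gamma(p,\tilde p),\Theta_\Gamma(q,\tilde q)\rangle_\Gamma$ over $\Gamma\times\Gamma$ according to which of the two rays $\Gamma_{\pm}$ each of the two integration variables lies on, evaluates the kernel $|\bx-\by|^{-1/2}$ explicitly on each of the four resulting pieces, and recognises the outcome as a pair of multiplicative-convolution bilinear forms on $\RR_+$. Since $p,\tilde p,q,\tilde q\in\mathscr{C}^\infty_0(\RR_+)$ have compact support inside $(0,+\infty)$, and since the only singularity that arises — namely $|t-s|^{-1/2}$ on the diagonal — is locally integrable in two variables, every interchange of summation and integration below is justified by Fubini's theorem, so the argument is purely computational.

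First I would introduce the parameterisation $\bx=\bc+t\be_{\sigma_1}$, $\by=\bc+s\be_{\sigma_2}$ with $s,t>0$ and $\sigma_1,\sigma_2\in\{+,-\}$, under which $d\sigma(\bx)=dt$ and $d\sigma(\by)=ds$ along each ray. Writing $u_\pm(t):=\Theta_\Gamma(p,\tilde p)(\bc+t\be_\pm)=(p(t)\pm\tilde p(t))/\sqrt2$ and, analogously, $v_\pm$ for $\Theta_\Gamma(q,\tilde q)$, the bilinear form becomes the sum of the four double integrals over $\RR_+\times\RR_+$, indexed by $(\sigma_1,\sigma_2)$, with integrand $u_{\sigma_2}(s)\,v_{\sigma_1}(t)\,|\bx-\by|^{-1/2}$.

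Next I would compute $|\bx-\by|^{-1/2}$ on each piece. Since $\be_\pm$ are unit vectors, the two diagonal pieces $\sigma_1=\sigma_2$ give $|\bx-\by|=|t-s|$. For the off-diagonal pieces one uses that the aperture angle of $\Gamma$ equals $2\theta$, so $\be_+\cdot\be_-=\cos(2\theta)$ — consistent with the dummy-corner case $\be_-=-\be_+$, $\theta=\pi/2$ — together with $1-\cos 2\theta=2\sin^2\theta$, which yields $|\bx-\by|^2=(t-s)^2+4ts\sin^2\theta$. The elementary identity $(\sqrt{t/s}-\sqrt{s/t})^2=(t-s)^2/(ts)$ then gives, for every $\alpha\in[0,2\pi)$, the relation $\big((t-s)^2+4ts\sin^2\alpha\big)^{-1/4}=(st)^{-1/4}\,\mathfrak{K}_\alpha(t/s)$; in particular $|\bx-\by|^{-1/2}=(st)^{-1/4}\mathfrak{K}_0(t/s)$ on the diagonal pieces and $(st)^{-1/4}\mathfrak{K}_\theta(t/s)$ on the off-diagonal pieces.

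Finally I would collect the four contributions: the $(++)$ and $(--)$ terms carry the factor $\mathfrak{K}_0(t/s)$ and pair $u_+v_+ + u_-v_-$, while the $(+-)$ and $(-+)$ terms carry $\mathfrak{K}_\theta(t/s)$ and pair $u_-v_+ + u_+v_-$ (one must be careful here that $v$ is evaluated at $\bx$ and $u$ at $\by$). Substituting $u_\pm=(p\pm\tilde p)/\sqrt2$ and $v_\pm=(q\pm\tilde q)/\sqrt2$ and invoking the polarisation identities $u_+(s)v_+(t)+u_-(s)v_-(t)=p(s)q(t)+\tilde p(s)\tilde q(t)$ and $u_+(s)v_-(t)+u_-(s)v_+(t)=p(s)q(t)-\tilde p(s)\tilde q(t)$ — this is precisely where the normalisation $1/\sqrt2$ and the $\pm$ signs in the definition of $\Theta_\Gamma$ are used — the whole expression regroups as $\int_{\RR_+\times\RR_+}(\mathfrak{K}_0+\mathfrak{K}_\theta)(t/s)\,p(s)q(t)(st)^{-1/4}\,ds\,dt+\int_{\RR_+\times\RR_+}(\mathfrak{K}_0-\mathfrak{K}_\theta)(t/s)\,\tilde p(s)\tilde q(t)(st)^{-1/4}\,ds\,dt$, which is exactly $\langle\mathscr{A}_\theta^+(p),q\rangle_{\RR_+}+\langle\mathscr{A}_\theta^-(\tilde p),\tilde q\rangle_{\RR_+}$ by \eqref{eq:lem:basic2}. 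The only genuine obstacle is the geometric bookkeeping — fixing $\be_+\cdot\be_-=\cos 2\theta$, getting the off-diagonal distance $|\bx-\by|$ right, and tracking which variable lies on which ray and which of $u,v$ is evaluated where — rather than any analytic subtlety.
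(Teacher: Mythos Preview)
Your proposal is correct and follows essentially the same approach as the paper's proof: split $\Gamma\times\Gamma$ into the four ray pairs, evaluate $|\bx-\by|^{-1/2}$ via $\be_+\cdot\be_-=\cos(2\theta)$ to obtain $(st)^{-1/4}\mathfrak{K}_\alpha(t/s)$ with $\alpha\in\{0,\theta\}$, and then regroup using $u_\pm=(p\pm\tilde p)/\sqrt2$, $v_\pm=(q\pm\tilde q)/\sqrt2$. Your version is in fact slightly more explicit than the paper's, which leaves the final polarisation step (``plugging the expression \dots\ and rearranging the terms'') to the reader, whereas you spell out the identities $u_+v_++u_-v_-=pq+\tilde p\tilde q$ and $u_+v_-+u_-v_+=pq-\tilde p\tilde q$.
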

	\begin{proof}
		The proof follows from a direct calculation in two steps. First, we simplify
		the integral kernel associated with the Riesz potential. Pick two points
		$\bx \in \Gamma_{+}$ and $\by \in \Gamma_{-}$. There
		exist $s, t >0$ such that $\bx= \bc + t \be_{+}$ and
		$\by = \bc + s \be_{-}$. It follows that $\vert \bx - \by \vert^{-1/2} =
		\vert t \be_{+} - s \be_{-} \vert^{-1/2}$. Obviously, we have
		$\be_{-}\cdot\be_{+} = \cos(2\theta)$ and thus $\vert s\be_{+}- t\be_{-}\vert^{2}
		= (s-t)^{2}+4st\sin^{2}(\theta)$. We therefore obtain
		\begin{equation} \label{Chap:8:eq:Hassan2}
			\frac{1}{\sqrt{\vert \bx - \by \vert}}=
			\mathfrak{K}_{\theta}(s/t)(s t)^{-1/4}.
		\end{equation}
		A similar result can be deduced for pairs of points $\bx', \by' \in
		\Gamma_{+}$ or $\bx', \by' \in \Gamma_{-}$ by setting $\theta=0$.
		
		Next let us set $u=\Theta_\Gamma(p,\tilde{p})$,
		$v = \Theta_\Gamma(q,\tilde{q})$, and $u_\pm = (p\pm\tilde{p})/\sqrt{2}$ and
		$v_\pm = (q\pm\tilde{q})/\sqrt{2}$.
		Using Equation \eqref{CornerBilinearForm0} we therefore have
		\begin{equation*}
			\begin{aligned}
				& \langle \mathscr{A}_{\Gamma}\Theta_\Gamma(p,\tilde{p}),\Theta_\Gamma(q,\tilde{q}) \rangle_{\Gamma}
				= \langle \mathscr{A}_{\Gamma}(u),v \rangle\\
				& \hspace{1cm} = \int_{\Gamma_+\times\Gamma_+}
				\frac{u(\by)v(\bx)}{\sqrt{\vert \bx-\by\vert}}\, d\bx d\by + \int_{\Gamma_-\times\Gamma_-}
				\frac{u(\by)v(\bx)}{\sqrt{\vert \bx-\by\vert}}\, d\bx d\by\\
				& \hspace{1cm} + \int_{\Gamma_+\times\Gamma_-}
				\frac{u(\by)v(\bx)}{\sqrt{\vert \bx-\by\vert}}\, d\bx d\by +\int_{\Gamma_-\times\Gamma_+}
				\frac{u(\by)v(\bx)}{\sqrt{\vert \bx-\by\vert}}\, d\bx d\by.
			\end{aligned}
		\end{equation*}	
		The parameterisations of the rays $\Gamma_{\pm}$ and \eqref{Chap:8:eq:Hassan2} then yield
		\begin{equation}\label{CornerBilinearForm}
			\begin{aligned}
				\langle \mathscr{A}_{\Gamma}(u),v \rangle
				& = \textcolor{white}{+}\int_{\RR_{+}\times \RR_{+}}
				\mathfrak{K}_{0}(t/s)\big(\;u_{+}(s)v_{+}(t) + u_{-}(s)v_{-}(t)\;\big) \frac{ds dt}{(st)^{1/4}}\\
				& \textcolor{white}{=} + \int_{\RR_{+}\times \RR_{+}}
				\mathfrak{K}_{\theta}(t/s)\big(\;u_{+}(s)v_{-}(t) + u_{-}(s)v_{+}(t)\;\big) \frac{ds dt}{(st)^{1/4}}.
			\end{aligned}
		\end{equation}  
		The result now follows by plugging  the expression of $u_\pm,v_\pm$ with respect
		to $p,\tilde{p},q,\tilde{q}$ in the equation above, and rearranging the terms.
	\end{proof}
	
	Lemma \ref{lem:basic} describes precisely how the operator $\mathscr{A}_{\Gamma}$ is transformed under the action of the map~$\Theta_\Gamma$. We remind the reader however, that the mapping property we seek, namely Estimate~\eqref{OperatorBound}, also involves the partition of unity function $\chi_{\mD}$. In order to account for this presence of $\chi_{\mD}$, we will use the following result, which is a straightforward consequence of Lemma \ref{lem:basic}.
	\vspace{4mm}
	
	\begin{corollary}\label{cor:basic}
		Consider the setting of Lemma \ref{lem:basic}. For any $p,\tilde{p},q,\tilde{q}\in \mathscr{C}^{\infty}_{0}(\RR_+)$ we have
		\[\langle \mathscr{A}_{\Gamma}\chi_{\mD}\Theta_\Gamma(p,\tilde{p}),\chi_{\mD}\Theta_\Gamma(q,\tilde{q}) \rangle_{\Gamma} = 
		\langle \mathscr{A}_{\theta}^+(\chi p),\chi q \rangle_{\RR_+} +
		\langle \mathscr{A}_{\theta}^-(\chi \tilde{p}),\chi \tilde{q} \rangle_{\RR_+},\]
		where $\chi \in \mathscr{C}_{\comp}^{\infty}(\RR_{+})$ is a cutoff function on $\RR_{+}$ that depends only on the partition of unity function $\chi_{\mD}$, and the operators $\mathscr{A}_{\theta}^\pm$ are defined by Equation \eqref{eq:lem:basic2}, exactly as in~Lemma \ref{lem:basic}.
	\end{corollary}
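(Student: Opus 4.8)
The plan is to reduce the statement directly to Lemma~\ref{lem:basic} by exploiting the radial symmetry of $\chi_{\mD}$ that was built into Notation~\ref{def:decomp}. The one substantive observation is that, since $\mD$ is a corner disk centred at $\bc$, the cutoff has the form $\chi_{\mD}(\bx) = \chi^{\mathscr{R}}_{\mD}(\vert\bc - \bx\vert)$, so along either ray $\Gamma_{\pm}$ we have $\chi_{\mD}(\bc + t\be_{\pm}) = \chi^{\mathscr{R}}_{\mD}(t)$, a quantity depending only on $t > 0$ and \emph{not} on the choice of sign. Setting $\chi(t) := \chi^{\mathscr{R}}_{\mD}(t)$ for $t \in \RR_+$, this $\chi$ is smooth with bounded support (because $\supp\chi_{\mD}\subset\mD$ and $\mD$ has finite radius), hence $\chi \in \mathscr{C}^{\infty}_{\comp}(\RR_+)$, and it manifestly depends only on $\chi_{\mD}$.

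Next I would record the key identity $\chi_{\mD}\,\Theta_\Gamma(p,\tilde p) = \Theta_\Gamma(\chi p, \chi \tilde p)$ as functions on $\Gamma$: using the defining formula~\eqref{eq:notation}, for every $t > 0$ one has
\[
\big(\chi_{\mD}\,\Theta_\Gamma(p,\tilde p)\big)(\bc + t\be_{\pm}) = \chi(t)\,\frac{p(t)\pm\tilde p(t)}{\sqrt 2} = \frac{(\chi p)(t)\pm(\chi\tilde p)(t)}{\sqrt 2} = \Theta_\Gamma(\chi p,\chi\tilde p)(\bc+t\be_{\pm}),
\]
and similarly $\chi_{\mD}\,\Theta_\Gamma(q,\tilde q) = \Theta_\Gamma(\chi q, \chi\tilde q)$. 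Since $p,\tilde p,q,\tilde q$ each have support a bounded subset of $(0,\infty)$ and $\chi$ is smooth, the products $\chi p, \chi\tilde p, \chi q, \chi\tilde q$ again lie in $\mathscr{C}^{\infty}_{0}(\RR_+)$, so they are admissible arguments for $\Theta_\Gamma$ and for the operators $\mathscr{A}_\theta^\pm$.

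It then only remains to substitute: applying Lemma~\ref{lem:basic} with the quadruple $(p,\tilde p,q,\tilde q)$ replaced by $(\chi p,\chi\tilde p,\chi q,\chi\tilde q)$ gives
\[
\langle \mathscr{A}_\Gamma \Theta_\Gamma(\chi p,\chi\tilde p), \Theta_\Gamma(\chi q,\chi\tilde q)\rangle_\Gamma = \langle \mathscr{A}_\theta^+(\chi p),\chi q\rangle_{\RR_+} + \langle\mathscr{A}_\theta^-(\chi\tilde p),\chi\tilde q\rangle_{\RR_+},
\]
and combining this with the identity of the previous paragraph produces exactly the asserted formula. I do not expect any genuine difficulty here; the only step with any content is the radial-symmetry reduction of the first paragraph, which is precisely why that assumption was imposed on corner disks in Notation~\ref{def:decomp}, and everything else is bookkeeping.
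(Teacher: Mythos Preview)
Your proof is correct and follows essentially the same approach as the paper's: both define $\chi$ as the radial profile of $\chi_{\mD}$ along the rays (you via $\chi^{\mathscr{R}}_{\mD}$ directly, the paper via $\chi(t):=\chi_{\mD}(\bc+t\be_+)$ and then invoking radial symmetry to match the other ray), verify the identity $\chi_{\mD}\Theta_\Gamma(p,\tilde p)=\Theta_\Gamma(\chi p,\chi\tilde p)$, and then apply Lemma~\ref{lem:basic} to the quadruple $(\chi p,\chi\tilde p,\chi q,\chi\tilde q)$. The only difference is the order of presentation.
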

	\begin{proof}
		Let $\chi \in \mathscr{C}_{\comp}^{\infty}(\RR_{+})$ be defined as $\chi(t):=\chi_{\mD}(\bc + t\be_+)~~ \forall t \geq 0$. Since $p  ~(\text{resp. } \tilde{p}, q, \tilde{q})\in \mathscr{C}_0^{\infty}(\RR_{+})$, it follows that $\chi p~  (\text{resp. } \chi\tilde{p}, \chi q, \chi \tilde{q})\in \mathscr{C}_0^{\infty}(\RR_{+})$. We may therefore apply Lemma~\ref{lem:basic} to obtain that
		\begin{align*}
			\langle \mathscr{A}_{\Gamma}\Theta_\Gamma(\chi p,\chi \tilde{p}),\Theta_\Gamma(\chi q,\tilde{\chi q}) \rangle_{\Gamma} = 
			\langle \mathscr{A}_{\theta}^+(\chi p),\chi q \rangle_{\RR_+} +
			\langle \mathscr{A}_{\theta}^-(\tilde{\chi p}),\tilde{\chi q} \rangle_{\RR_+}.
		\end{align*}
		
		Next, we recall from Notation \ref{def:decomp} that the partition of unity function $\chi_{\mD}$ is radially symmetric with respect to the corner $\bc$ by assumption. As a consequence, we have $\chi(t):=\chi_{\mD}(\bc + t\be_+)= \chi_{\mD}(\bc + t\be_-) ~~ \forall t>0$. It follows that
		\begin{align*}
			\Theta_\Gamma(\chi p,\chi \tilde{p})= \chi_{\mD}\Theta_\Gamma(p,\tilde{p})\qquad \text{and}\qquad  \Theta_\Gamma(\chi q,\tilde{\chi q})=\chi_{\mD}\Theta_\Gamma(q,\tilde{q}),
		\end{align*}
		which completes the proof.
	\end{proof}
	
	\noindent 
	Our motivation for introducing the mapping $\Theta_\Gamma$ is that this map can be used to characterize trace norms in a very explicit manner. Indeed, the following result was established in \cite[Lemma 1.12]{zbMATH04070248}.\vspace{3mm}
	
	\begin{lemma}\label{EquivalentNorms}
		For all $s \in [0, 3/2)$, the map $\Theta_\Gamma$ defined through Equation \eqref{eq:notation} extends to a continuous isomorphism
		$\Theta_\Gamma: \calH^{s}(\RR_+)\to \mH^{s}(\Gamma)$ where $\calH^{s}(\RR_+):=\mH^{s}(\RR_+)\times
		\tilde{\mH}^{s}(\RR_+)$ is equipped with the cartesian product norm 
		\begin{equation*}
			\Vert (p,\tilde{p})\Vert_{\mathcal{H}^s(\RR_+)}^2:= \Vert p\Vert_{\mH^s(\RR_+)}^2+
			\Vert \tilde{p}\Vert_{\tilde{\mH}^s(\RR_+)}^2.
		\end{equation*}
		In a dual manner, the map $\Theta_\Gamma$ extends to a continuous isomorphism
		$\Theta_\Gamma: \calH^{-s}(\RR_+)\to \mH^{-s}(\Gamma)$ where $\calH^{-s}(\RR_+) =
		\calH^{+s}(\RR_+)^*:=\tilde{\mH}^{-s}(\RR_+)\times\mH^{-s}(\RR_+)$ is equipped with
		the cartesian product norm
		\begin{equation*}
			\Vert (\tilde{q},q)\Vert_{\mathcal{H}^{-s}(\RR_+)}^2:= \Vert \tilde{q}\Vert_{\tilde{\mH}^{-s}(\RR_+)}^2+
			\Vert q\Vert_{\mH^{-s}(\RR_+)}^2.
		\end{equation*}
	\end{lemma}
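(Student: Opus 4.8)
\emph{Overall strategy.} The statement is classical (it is \cite[Lemma 1.12]{zbMATH04070248}), and the plan is to transport everything to the real line, split functions into even and odd parts, and invoke the sharp trace/extension theory on the half-line; the dual statement will then follow by a duality argument. The first step is to parameterise $\Gamma$ by signed arc length, identifying $\bc+t\be_+$ with $t>0$ and $\bc+t\be_-$ with $-t<0$, so that $\Gamma$ becomes isometric to $\RR$ along each ray. I would then show that $\mH^s(\Gamma)\cong\mH^s(\RR)$ with equivalent norms for every $s\in[0,3/2)$. For $s\in[0,1)$ this rests on the comparison between the chordal and geodesic distances on $\Gamma$: for $\bx=\bc+t\be_+$ and $\by=\bc+\tau\be_-$ one has $\be_+\cdot\be_-=\cos 2\theta$, hence $|\bx-\by|^2=(t-\tau)^2+4t\tau\sin^2\theta$, whereas the geodesic distance is $t+\tau$ with $(t+\tau)^2=(t-\tau)^2+4t\tau$; since $\sin\theta>0$ (because $2\theta\in(0,2\pi)$) this yields $\sin\theta\,(t+\tau)\le|\bx-\by|\le t+\tau$, and trivially $|\bx-\by|$ equals the geodesic distance for points on a common ray, so the Sobolev--Slobodeckii seminorm of $\mH^s(\Gamma)$ is equivalent to that of $\mH^s(\RR)$. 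For $s\in[1,3/2)$ one instead uses that, under the same parameterisation, $\mH^s(\Gamma)$ is the space of functions that are $\mH^s$ on each ray and continuous at $\bc$ (continuity being automatic for $s>1/2$), which is exactly $\mH^s(\RR)$; here it is essential that $s<3/2$, since the derivative of such a function may jump across $\bc$.

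\emph{Even/odd splitting and the half-line isomorphisms.} Under this identification, $\Theta_\Gamma(p,\tilde p)$ becomes the function $U$ on $\RR$ with $\sqrt{2}\,U=\mathscr{E}p+\mathscr{O}\tilde p$, where $\mathscr{E}$ and $\mathscr{O}$ denote the even and odd extensions from $\RR_+$ to $\RR$. Conversely, any $U\in\mH^s(\RR)$ splits uniquely into its even and odd parts $U=U_{\mrm e}+U_{\mrm o}$, and this splitting is orthogonal for the Fourier norm $\Vert U\Vert_{\mH^s(\RR)}^2=\int_{\RR}(1+\xi^2)^s|\hat U(\xi)|^2\,d\xi$, so $\Vert U_{\mrm e}\Vert_{\mH^s(\RR)}^2+\Vert U_{\mrm o}\Vert_{\mH^s(\RR)}^2=\Vert U\Vert_{\mH^s(\RR)}^2$. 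It then remains to prove, for $s\in[0,3/2)$, that $\mathscr{E}$ is an isomorphism from $\mH^s(\RR_+)$ onto the even subspace of $\mH^s(\RR)$, and that $\mathscr{O}$ is an isomorphism from $\tilde{\mH}^s(\RR_+)$ onto the odd subspace. For the even case, restriction $\mH^s(\RR)\to\mH^s(\RR_+)$ is always bounded, its restriction to even functions is inverted by $\mathscr{E}$, and the boundedness of $\mathscr{E}\colon\mH^s(\RR_+)\to\mH^s(\RR)$ holds precisely for $s<3/2$ (the obstruction at $s=3/2$ being the derivative jump that $\mathscr{E}p$ acquires at the origin). For the odd case: for $s\in[0,1/2)$ one has $\tilde{\mH}^s(\RR_+)=\mH^s(\RR_+)$ and $\mathscr{O}$ is plainly bounded; for $s=1/2$ the weighted term $\int_0^\infty|v(x)|^2\,x^{-1}dx$ appearing in \eqref{eq:norms_2} is exactly the Hardy-type quantity that controls $\Vert\mathscr{O}v\Vert_{\mH^{1/2}(\RR)}$ for $v$ vanishing at $0$; for $s\in(1/2,3/2)$ one has $\tilde{\mH}^s(\RR_+)=\{v\in\mH^s(\RR_+):v(0)=0\}$ and, for such $v$, $\mathscr{O}v$ is continuous with $(\mathscr{O}v)'=\mathscr{E}(v')\in\mH^{s-1}(\RR)$ (even extension at order $s-1<1/2$), while conversely the restriction to $\RR_+$ of an odd $\mH^s(\RR)$ function vanishes at $0$. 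In each case both inequalities hold, and combining this with the first step shows that $\Theta_\Gamma\colon\calH^s(\RR_+)\to\mH^s(\Gamma)$ is a continuous isomorphism for $s\in[0,3/2)$.

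\emph{The dual statement.} A direct computation with the $\mL^2$ inner product shows that $\Theta_\Gamma\colon\mL^2(\RR_+)\times\mL^2(\RR_+)\to\mL^2(\Gamma)$ is in fact a unitary isomorphism (the factor $\sqrt2$ in \eqref{eq:notation} being calibrated for this), and more generally $\langle\Theta_\Gamma(\tilde q,q),\Theta_\Gamma(p,\tilde p)\rangle_\Gamma=\langle\tilde q,p\rangle_{\RR_+}+\langle q,\tilde p\rangle_{\RR_+}$. Thus $\Theta_\Gamma$ intertwines the canonical duality pairing of $\calH^{-s}(\RR_+)=\tilde{\mH}^{-s}(\RR_+)\times\mH^{-s}(\RR_+)$ against $\calH^{s}(\RR_+)$ with the $\mH^{-s}(\Gamma)$--$\mH^{s}(\Gamma)$ pairing; since $\Theta_\Gamma\colon\calH^s(\RR_+)\to\mH^s(\Gamma)$ is an isomorphism, passing to adjoints shows that $\Theta_\Gamma\colon\calH^{-s}(\RR_+)\to\mH^{-s}(\Gamma)$ is one as well.

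\emph{Main obstacle.} The only genuinely delicate ingredient is the second step: the sharp boundedness of the even and odd extension operators up to, but not including, the critical Sobolev exponent $3/2$, together with the $s=1/2$ Hardy inequality linking the Lions--Magenes norm in \eqref{eq:norms_2} to the odd extension. Everything else is bookkeeping, and this delicate part is precisely the content imported from \cite[Lemma 1.12]{zbMATH04070248}.
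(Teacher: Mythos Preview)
The paper does not prove this lemma at all: it is stated immediately after the sentence ``the following result was established in \cite[Lemma 1.12]{zbMATH04070248}'' and is used as a black box. Your proposal correctly identifies the source and then supplies a sketch of the underlying argument; that sketch---bi-Lipschitz equivalence of chordal and geodesic distance on $\Gamma$ to identify $\mH^s(\Gamma)$ with $\mH^s(\RR)$, orthogonal even/odd splitting on $\RR$, the sharp mapping properties of the even and odd extension operators for $s<3/2$ (with the Hardy term in \eqref{eq:norms_2} handling the critical $s=1/2$ odd case), and a duality argument for negative orders---is the standard route and is essentially how the Costabel--Stephan proof proceeds. One minor caveat: for $s\in[1,3/2)$ the very definition of $\mH^s(\Gamma)$ on a genuinely non-smooth curve deserves a word (the paper only writes down the Slobodeckii norm for $s=1/2$), but since the applications in this article only require $s\in\{0,1/2\}$ and their duals, this does not affect anything downstream.
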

	
	\noindent 
	Lemma \ref{EquivalentNorms} will be important in the subsequent analysis because it reduces the study of the mapping properties of an operator on $\Gamma$ (in this case $\mathscr{A}_{\Gamma}$) to a fine analysis of the mapping properties of an appropriately transformed operator acting on functions defined over $\RR_+$ (in this case $\mathscr{A}_{\theta}^{\pm}$). The following proposition, which follows by combining both Lemma \ref{EquivalentNorms} and Corollary~\ref{cor:basic} states these ideas more precisely.  \vspace{5mm}
	
	\begin{proposition}\label{prop:new}
		We have $\mathscr{A}_{\pi/2}^+ - \mathscr{A}_{\theta}^+ = \mathscr{A}_{\pi/2}^- - \mathscr{A}_{\theta}^-$ and
		there exists  a finite constant $C>0$  such that
		\begin{equation}
			\begin{aligned}
				C\;\Vert \mathscr{A}^{\chi}_\Gamma:\mL^2(\Gamma)\to \mH^{1/2}(\Gamma)\Vert
				& \leq \Vert \mathscr{A}^{\chi}_\RR:\mL^2(\RR)\to \mH^{1/2}(\RR)\Vert \\
				& + \sup_{p,q\in \mathscr{C}^{\infty}_{0}(\RR_+)\setminus\{0\}}
				\frac{\vert \langle (\mathscr{A}_{\pi/2}^+ - \mathscr{A}_{\theta}^+)\chi p, \chi q\rangle_{\RR}\vert}{
					\Vert p\Vert_{\mL^{2}(\RR_+)}\Vert q\Vert_{\mH^{-1/2}(\RR_+)}},
			\end{aligned}
		\end{equation}
		where $\chi \in \mathscr{C}_{\comp}^{\infty}(\RR_{+})$ is a cutoff function on $\RR_{+}$ that depends only on $\chi_{\mD}$.
	\end{proposition}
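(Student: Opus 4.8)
The plan is to prove the two assertions in turn: the first is a one-line comparison of integral kernels, while the second reduces the corner continuity modulus to the flat modulus $\Vert\mathscr{A}^{\chi}_\RR:\mL^2(\RR)\to\mH^{1/2}(\RR)\Vert$ plus a purely ``corner'' remainder, by pushing the computation through the isomorphism $\Theta_\Gamma$ of Lemma~\ref{EquivalentNorms} together with the factorisation supplied by Corollary~\ref{cor:basic}. The first identity I would obtain simply by reading off the integral kernels furnished by Lemma~\ref{lem:basic}: the operator $\mathscr{A}_{\alpha}^{\pm}$ is associated with the kernel $\big(\mathfrak{K}_{0}(t/s)\pm\mathfrak{K}_{\alpha}(t/s)\big)(st)^{-1/4}$, so that the summand $\mathfrak{K}_{0}(t/s)(st)^{-1/4}$ — common to the $+$ and $-$ operators — cancels in $\mathscr{A}_{\pi/2}^{\pm}-\mathscr{A}_{\theta}^{\pm}$; this is the only feature of the difference that is used below.

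For the estimate I would fix arbitrary $u,v\in\mathscr{C}^{\infty}_{\comp}(\Gamma)$ and set $p(t):=\big(u(\bc+t\be_{+})+u(\bc+t\be_{-})\big)/\sqrt{2}$, $\tilde p(t):=\big(u(\bc+t\be_{+})-u(\bc+t\be_{-})\big)/\sqrt{2}$, so that $u=\Theta_\Gamma(p,\tilde p)$, and define $q,\tilde q$ from $v$ the same way; the functions $p,\tilde p,q,\tilde q$ are smooth on $\RR_+$ with bounded support but need not vanish near the origin. Since every kernel occurring here is dominated by $|t-s|^{-1/2}$, hence locally integrable on $\RR_+\times\RR_+$, and $\mathscr{C}^{\infty}_{0}(\RR_+)$ is dense in $\mL^{2}(\RR_+)$ and in $\mH^{\pm1/2}(\RR_+)$, a routine density/dominated-convergence argument extends the identity of Corollary~\ref{cor:basic} — as well as its specialisation to the flat surface $\{0\}\times\RR$, where $\theta=\pi/2$ and the same radial cutoff $\chi$ appears because $\chi_{\mD}$ is radial about $\bc$ — to such $p,\tilde p,q,\tilde q$. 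Applying Corollary~\ref{cor:basic} on $\Gamma$, writing $\mathscr{A}_{\theta}^{\pm}=\mathscr{A}_{\pi/2}^{\pm}-(\mathscr{A}_{\pi/2}^{\pm}-\mathscr{A}_{\theta}^{\pm})$, and recognising via Corollary~\ref{cor:basic} on $\{0\}\times\RR$ that $\langle\mathscr{A}_{\pi/2}^{+}(\chi p),\chi q\rangle_{\RR_+}+\langle\mathscr{A}_{\pi/2}^{-}(\chi\tilde p),\chi\tilde q\rangle_{\RR_+}=\langle\mathscr{A}_\RR\chi_{\mD}\Theta_\RR(p,\tilde p),\chi_{\mD}\Theta_\RR(q,\tilde q)\rangle_{\RR}$, I arrive at
$$\langle\mathscr{A}_{\Gamma}\chi_{\mD}u,\chi_{\mD}v\rangle_{\Gamma}=\langle\mathscr{A}_\RR\chi_{\mD}\Theta_\RR(p,\tilde p),\chi_{\mD}\Theta_\RR(q,\tilde q)\rangle_{\RR}-\langle(\mathscr{A}_{\pi/2}^{+}-\mathscr{A}_{\theta}^{+})(\chi p),\chi q\rangle_{\RR_+}-\langle(\mathscr{A}_{\pi/2}^{-}-\mathscr{A}_{\theta}^{-})(\chi\tilde p),\chi\tilde q\rangle_{\RR_+}.$$

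Next I would estimate the three terms. The first is bounded by $\Vert\mathscr{A}^{\chi}_\RR:\mL^{2}(\RR)\to\mH^{1/2}(\RR)\Vert\,\Vert\Theta_\RR(p,\tilde p)\Vert_{\mL^{2}(\RR)}\Vert\Theta_\RR(q,\tilde q)\Vert_{\mH^{-1/2}(\RR)}$ by \eqref{eq:Flat} (again extended by density). For the second I invoke the definition of the supremum $M$ appearing in the statement, obtaining $M\Vert p\Vert_{\mL^{2}(\RR_+)}\Vert q\Vert_{\mH^{-1/2}(\RR_+)}$; for the third I first use the first identity $\mathscr{A}_{\pi/2}^{-}-\mathscr{A}_{\theta}^{-}=\mathscr{A}_{\pi/2}^{+}-\mathscr{A}_{\theta}^{+}$ and then the same supremum, obtaining $M\Vert\tilde p\Vert_{\mL^{2}(\RR_+)}\Vert\tilde q\Vert_{\mH^{-1/2}(\RR_+)}$. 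It then remains to convert the norms on $\RR_+$ and on $\RR$ into norms on $\Gamma$: since $\Theta_\Gamma\colon\calH^{0}(\RR_+)\to\mL^{2}(\Gamma)$ and $\Theta_\Gamma\colon\calH^{-1/2}(\RR_+)\to\mH^{-1/2}(\Gamma)$ are isomorphisms (Lemma~\ref{EquivalentNorms}), likewise with $\RR$ in place of $\Gamma$, with $\calH^{0}(\RR_+)=\mL^{2}(\RR_+)\times\mL^{2}(\RR_+)$ and $\calH^{-1/2}(\RR_+)=\tilde{\mH}^{-1/2}(\RR_+)\times\mH^{-1/2}(\RR_+)$, and since $\tilde{\mH}^{-1/2}(\RR_+)$ embeds continuously in $\mH^{-1/2}(\RR_+)$, one gets $\Vert p\Vert_{\mL^{2}(\RR_+)}+\Vert\tilde p\Vert_{\mL^{2}(\RR_+)}+\Vert\Theta_\RR(p,\tilde p)\Vert_{\mL^{2}(\RR)}\lesssim\Vert u\Vert_{\mL^{2}(\Gamma)}$ and $\Vert q\Vert_{\mH^{-1/2}(\RR_+)}+\Vert\tilde q\Vert_{\mH^{-1/2}(\RR_+)}+\Vert\Theta_\RR(q,\tilde q)\Vert_{\mH^{-1/2}(\RR)}\lesssim\Vert v\Vert_{\mH^{-1/2}(\Gamma)}$. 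Combining the three bounds, dividing by $\Vert u\Vert_{\mL^{2}(\Gamma)}\Vert v\Vert_{\mH^{-1/2}(\Gamma)}$, and taking the supremum over $u,v\in\mathscr{C}^{\infty}_{\comp}(\Gamma)\setminus\{0\}$ would then yield the claimed inequality, with $C$ the reciprocal of the implied constant.

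The analytic content of this proposition is deliberately light: no hard estimate is invoked, since the genuine analysis of the corner remainder $\mathscr{A}_{\pi/2}^{+}-\mathscr{A}_{\theta}^{+}$ is postponed to the Mellin-transform study of the following sections, and the proposition merely organises the reduction. Accordingly, the main obstacle I anticipate is bookkeeping rather than analysis, and it has two facets. First, the identities and bounds of Corollary~\ref{cor:basic}, of its flat specialisation, of \eqref{eq:Flat}, and the defining bound of $M$, are all formulated for functions vanishing near the vertex (resp.\ the origin), whereas a generic $u\in\mathscr{C}^{\infty}_{\comp}(\Gamma)$ is nonzero at $\bc$; extending all of them to smooth functions of bounded support is the purpose of the density/dominated-convergence step, which is legitimate precisely because every kernel involved is controlled by $|t-s|^{-1/2}$. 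Second, one must keep track, tensor factor by tensor factor, of which member of the dual pair $\big(\tilde{\mH}^{-1/2}(\RR_+),\mH^{-1/2}(\RR_+)\big)$ — and of the pair $(\mH^{0},\mL^{2})$ — is the relevant one; this is exactly what the cartesian-product description of $\calH^{\pm s}(\RR_+)$ in Lemma~\ref{EquivalentNorms}, together with the inclusion $\tilde{\mH}^{-1/2}(\RR_+)\hookrightarrow\mH^{-1/2}(\RR_+)$, is designed to manage.
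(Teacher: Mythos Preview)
Your proposal is correct and follows essentially the same route as the paper: both arguments decompose $\mathscr{A}_{\theta}^{\pm}=\mathscr{A}_{\pi/2}^{\pm}-(\mathscr{A}_{\pi/2}^{\pm}-\mathscr{A}_{\theta}^{\pm})$, invoke Corollary~\ref{cor:basic} on $\Gamma$ and on $\{0\}\times\RR$, use the kernel identity from Lemma~\ref{lem:basic} for $\mathscr{A}_{\pi/2}^{+}-\mathscr{A}_{\theta}^{+}=\mathscr{A}_{\pi/2}^{-}-\mathscr{A}_{\theta}^{-}$, the embedding $\tilde{\mH}^{-1/2}(\RR_+)\hookrightarrow\mH^{-1/2}(\RR_+)$, and the isomorphism of Lemma~\ref{EquivalentNorms} to translate norms. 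The only organisational difference is that the paper takes the supremum directly over $(p,\tilde p),(q,\tilde q)\in\mathscr{C}^{\infty}_{0}(\RR_+)^2$ and uses the isomorphy of $\Theta_\Gamma$ together with density of $\mathscr{C}^{\infty}_{0}(\RR_+)^2$ in $\calH^{s}(\RR_+)$ to recover the $\Gamma$-operator norm, thereby sidestepping the density/dominated-convergence extension of Corollary~\ref{cor:basic} to functions not vanishing at the origin that you carry out explicitly.
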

	\begin{proof}
		Let us consider the operator $\Theta_\Gamma^*\mathscr{A}_\Gamma\Theta_\Gamma \colon \mathscr{C}^\infty_{0}(\RR_+)^2\rightarrow\mathscr{C}^\infty_{0}(\RR_+)^2$. The main idea of the proof is to exploit the isomorphy of $\Theta_\Gamma$ provided by Lemma \ref{EquivalentNorms}. To this end, we first write $\Theta_\Gamma^*\mathscr{A}_\Gamma\Theta_\Gamma
		=   \Theta_\RR^*
		\mathscr{A}_\RR\Theta_\RR + (\Theta_\Gamma^*\mathscr{A}_\Gamma\Theta_\Gamma -\Theta_\RR^*\mathscr{A}_\RR\Theta_\RR)$ and take into account the expression of $\Theta_\Gamma^*\mathscr{A}_\Gamma\Theta_\Gamma$
		offered by Corollary \ref{cor:basic}. We thus obtain the estimate 
		\begin{equation}\label{EquivalenceOperatorNorm2}
			\begin{aligned}
				\sup_{u,v\in \mathscr{C}^\infty_{0}(\RR_+)^2\setminus\{0\}}
				\frac{\vert\langle \mathscr{A}_\Gamma\chi_{\mD}\Theta_\Gamma(  u),\chi_{\mD}\Theta_\Gamma( v)\rangle_\Gamma\vert}{
					\Vert u\Vert_{\calH^0(\RR_+)}\Vert v\Vert_{\calH^{-1/2}(\RR_+)}} & \leq\\
				\sup_{u,v\in \mathscr{C}^\infty_{0}(\RR_+)^2\setminus\{0\}}
				\frac{\vert\langle \mathscr{A}_\RR\chi_{\mD}\Theta_\RR(  u),\chi_{\mD}\Theta_\RR(  v)\rangle_\RR\vert}{
					\Vert u\Vert_{\calH^0(\RR_+)}\Vert v\Vert_{\calH^{-1/2}(\RR_+)}} & +
				\sup_{p,q\in\mathscr{C}^\infty_0(\RR_+)\setminus\{0\}}
				\frac{\vert \langle(\mathscr{A}_{\pi/2}^+ - \mathscr{A}_{\theta}^+)\chi p, \chi q\rangle_{\RR_+}\vert}{
					\Vert p\Vert_{\mL^2(\RR_+)}\Vert q\Vert_{\tilde{\mH}^{-1/2}(\RR_+)}}\\
				& + \sup_{p,q\in\mathscr{C}^\infty_0(\RR_+)\setminus\{0\}}
				\frac{\vert \langle(\mathscr{A}_{\pi/2}^- - \mathscr{A}_{\theta}^-)\chi p, \chi q\rangle_{\RR_+}\vert}{
					\Vert p\Vert_{\mL^2(\RR_+)}\Vert q\Vert_{\mH^{-1/2}(\RR_+)}},
			\end{aligned}
		\end{equation}
		where $\chi \in \mathscr{C}_{\comp}^{\infty}(\RR_{+})$ is defined exactly as in Corollary \ref{cor:basic}, i.e., $\chi(t):=\chi_{\mD}(\bc + t\be_+)~~ \forall t \geq 0$.
		
		Equation \eqref{eq:lem:basic2} clearly implies $\mathscr{A}_{\pi/2}^+ - \mathscr{A}_{\theta}^+=\mathscr{A}_{\pi/2}^- - \mathscr{A}_{\theta}^-$.
		Additionally, $\Vert q\Vert_{\mH^{-1/2}(\RR_+)}\leq \Vert q\Vert_{\tilde{\mH}^{-1/2}(\RR_+)}$ so the second term in the right hand side above is bounded above by the last term. To conclude, it only remains to bound from below the left hand side of \eqref{EquivalenceOperatorNorm2} and to bound from above the first term of the right hand side.
		
		To this end, we observe that due to the bijectivity of $\Theta_\Gamma$ given by Lemma \ref{EquivalentNorms} and the density of $\mathscr{C}^\infty_{0}(\RR_+)\times \mathscr{C}^\infty_{0}(\RR_+)$ in $\calH^{s}(\RR_+)$ for $s=0,1/2$, there exists constants $C_\pm^\Gamma>0$ such that
		\begin{equation}\label{EquivalenceOperatorNorm1}
			\begin{aligned}
				& C_-^\Gamma \Vert \mathscr{A}^{\chi}_\Gamma:\mL^2(\Gamma)\to \mH^{1/2}(\Gamma)\Vert\leq
				\sup_{u,v\in \mathscr{C}^\infty_{0}(\RR_+)^2\setminus\{0\}}
				\frac{\vert\langle \mathscr{A}_\Gamma\chi_{\mD}\Theta_\Gamma(u),\chi_{\mD}\Theta_\Gamma(v)\rangle_\Gamma\vert}{
					\Vert u\Vert_{\calH^0(\RR_+)}\Vert v\Vert_{\calH^{-1/2}(\RR_+)}},\\
				& C_+^\Gamma \Vert \mathscr{A}^{\chi}_\Gamma:\mL^2(\Gamma)\to \mH^{1/2}(\Gamma)\Vert\geq
				\sup_{u,v\in \mathscr{C}^\infty_{0}(\RR_+)^2\setminus\{0\}}
				\frac{\vert\langle \mathscr{A}_\Gamma\chi_{\mD}\Theta_\Gamma(u),\chi_{\mD}\Theta_\Gamma(v)\rangle_\Gamma\vert}{
					\Vert u\Vert_{\calH^0(\RR_+)}\Vert v\Vert_{\calH^{-1/2}(\RR_+)}}.
			\end{aligned}
		\end{equation}
		Similar estimates obviously hold with $\Gamma$ replaced by $\{0\}\times \RR$ (and a priori different constants $C_\pm>0$).
		Plugging \eqref{EquivalenceOperatorNorm1} into \eqref{EquivalenceOperatorNorm2} therefore leads to the desired estimate.
	\end{proof}

	To summarise the developments of this section, we have first reduced the study of the mapping properties of $\mathscr{A}_{\mD,\mD}^{\chi} \colon \mathscr{C}^{\infty}(\partial \Omega) \rightarrow \mathscr{C}^{\infty}(\partial \Omega)^*$ to that of the corner operator $\mathscr{A}_\Gamma \colon \mathscr{C}_{\comp}^{\infty}(\Gamma)\rightarrow \mathscr{C}_{\comp}^{\infty}(\Gamma)^*$. In view of Proposition \ref{prop:new} and Estimate \eqref{eq:Flat}, the study of the corner operator $\mathscr{A}_\Gamma $ can in turn be reduced to the study of the operator $\mathscr{A}_{\theta}^+ \colon \mathscr{C}_0^{\infty}(\RR_+) \rightarrow \mathscr{C}_0^{\infty}(\RR_+)^*$. The later operator is a particular combination of
	multiplicative convolutions naturally diagonalized by the Mellin transform.

	\section{Recap on the Mellin Transform}\label{Chap8:sec:4}
	
	There are only a few references in the literature that provide an overview of the Mellin transform and its use
	to characterise weighted Sobolev spaces on the positive real line. Additionally, the precise conventions on the
	definition of the Mellin transform often vary across different references. The goal of the current section is
	to fix notations, summarise the main properties of the Mellin transform and provide a brief, self-contained and
	consistent exposition on its connection with weighted Sobolev spaces. Most of the subsequent results are standard
	and can, for instance, be found in \cite{zbMATH03341573, marcati:tel-02072774, MR924157}; we follow the convention
	of \cite{jeanquartier1979transformation}. Let us recall that we denote $\mathscr{C}^\infty_0(\RR_+):=\{\varphi\in
	\mathscr{C}^\infty(\RR_+)\vert\; \text{with bounded}\;\mrm{supp}(\varphi) \subset(0,+\infty)\,\}$. Moreover, for any subset $\Lambda \subseteq \CC$, we will frequently denote $\mathcal{H}(\Lambda):=\{ v\colon \Lambda \rightarrow \CC \text{ such that } v \text{ is analytic over } \Lambda\}$.

	\subsection{Definition of the Mellin transform}\label{Chap8:sec:4a}
	\begin{definition}[Mellin Transform]\label{def:Mellin}
		The Mellin transform of $u\in\mathscr{C}^{\infty}_{0}(\RR_{+})$, denoted $\hat{u} = \mathscr{M}(u)$,
		is defined by the formula 
		\begin{equation}
			\begin{aligned}
				\mathscr{M}u(\lambda)= \hat{u}(\lambda):=\int_{0}^{+\infty}u(r)r^{-\lambda}dr/r
				\quad\quad   \forall \lambda \in \CC.
			\end{aligned}
		\end{equation}
	\end{definition}
	
	\noindent
	Morera's theorem \cite[Chapter 10]{MR924157} implies that for any function $u\in
	\mathscr{C}^{\infty}_{0}(\RR_{+})$, the Mellin transform $\hat{u}$ is an entire function, i.e., $\hat{u}\in\mathcal{H}(\CC)$.
	
	There is a close relationship between Mellin and Fourier transforms. Indeed, denoting by $\mathscr{S}(\RR)^*$ the space
	of tempered distributions and by $\mathscr{F} \colon \mathscr{S}(\RR)^* \rightarrow \mathscr{S}(\RR)^*$ the Fourier transform, it is a
	simple exercise to show that for all $u\in\mathscr{C}^{\infty}_{0}(\RR_{+})$ and all $\lambda \in \CC$,
	\begin{equation}\label{Chap:8:lem:equivalence}
		(\mathscr{M}u)(\imath\lambda) = \mathscr{F}(u\circ \exp)(\lambda).
	\end{equation}
	
	\noindent
	Equation \eqref{Chap:8:lem:equivalence} transports all results from classical Fourier theory to the framework
	of the Mellin transform using the so-called Euler change of variables, i.e., using the map $t \mapsto \exp(t)$. In particular, we have a counterpart to the well-known Parseval theorem. \vspace{5mm}
	
	\begin{lemma}[Parseval's Theorem for the Mellin Transform]\label{lem:Parseval}
		For all $u \in \mathscr{C}_0^{\infty}(\mathbb{R}_+)$ and all $\beta \in \mathbb{R}$ it holds that
		\begin{align*}
			\int_{0}^{\infty}\vert u(r)\vert^{2}r^{-2\beta}dr/r = \frac{1}{2\imath\pi}
			\int_{\beta-\imath\infty}^{\beta+\imath\infty}\vert \hat{u}(\lambda)\vert^{2} d\lambda.
		\end{align*}
	\end{lemma}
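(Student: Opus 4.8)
The plan is to transport the classical Plancherel (Parseval) theorem for the Fourier transform to the Mellin setting via the Euler change of variables $r=\exp(t)$, exploiting the identity \eqref{Chap:8:lem:equivalence}; the only mild complication, namely the weight $r^{-\beta}$ appearing on both sides, I would dispose of first by a reduction to the case $\beta=0$. Indeed, given $u\in\mathscr{C}^\infty_0(\RR_+)$ and $\beta\in\RR$, set $u_\beta(r):=r^{-\beta}u(r)$; since $\supp(u)$ is a compact subset of $(0,+\infty)$, we still have $u_\beta\in\mathscr{C}^\infty_0(\RR_+)$. A direct computation from Definition \ref{def:Mellin} yields $\hat{u}(\beta+\imath\xi)=\widehat{u_\beta}(\imath\xi)$ for every $\xi\in\RR$, together with $\int_0^\infty|u(r)|^2r^{-2\beta}\,dr/r=\int_0^\infty|u_\beta(r)|^2\,dr/r$. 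Parametrising the vertical line by $\lambda=\beta+\imath\xi$ (so that $d\lambda=\imath\,d\xi$) then shows that the asserted identity for the pair $(u,\beta)$ is equivalent to the same identity for the pair $(u_\beta,0)$. Hence it suffices to treat the case $\beta=0$ for an arbitrary $u\in\mathscr{C}^\infty_0(\RR_+)$.

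For $\beta=0$, put $v:=u\circ\exp$. Because $u$ is smooth with support compact in $(0,+\infty)$, the function $v$ lies in $\mathscr{C}^\infty_c(\RR)\subset\mathscr{S}(\RR)$. The substitution $r=\exp(t)$, under which $dr/r=dt$, turns the left-hand side into $\int_{-\infty}^{+\infty}|v(t)|^2\,dt=\Vert v\Vert_{\mL^2(\RR)}^2$, while parametrising $\lambda=\imath\xi$ and invoking \eqref{Chap:8:lem:equivalence} (which identifies $\hat u(\imath\xi)$ with $\mathscr{F}v(\xi)$) gives
\[ \frac{1}{2\imath\pi}\int_{-\imath\infty}^{+\imath\infty}|\hat u(\lambda)|^2\,d\lambda=\frac{1}{2\pi}\int_{-\infty}^{+\infty}|\mathscr{F}v(\xi)|^2\,d\xi=\frac{1}{2\pi}\Vert\mathscr{F}v\Vert_{\mL^2(\RR)}^2. \]
The classical Plancherel identity $\Vert\mathscr{F}v\Vert_{\mL^2(\RR)}^2=2\pi\Vert v\Vert_{\mL^2(\RR)}^2$ — valid in the unnormalised convention for $\mathscr{F}$ underlying \eqref{Chap:8:lem:equivalence}, and applicable since $v\in\mathscr{S}(\RR)$ — then equates the two sides, and combined with the reduction of the previous paragraph this proves the lemma.

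I do not expect a genuine obstacle here: every integral in sight converges absolutely, since $u\in\mathscr{C}^\infty_0(\RR_+)$ forces $v=u\circ\exp$ to be smooth and compactly supported, so no density or truncation argument is needed and the change of variables is entirely unrestricted. The single point deserving a modicum of care is the bookkeeping of constants — reconciling the factor $\tfrac{1}{2\imath\pi}$ in the statement, combined with $d\lambda=\imath\,d\xi$, with the $2\pi$ that appears in Plancherel's theorem for the unnormalised Fourier transform used in \eqref{Chap:8:lem:equivalence}.
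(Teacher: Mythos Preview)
Your proof is correct and follows precisely the route the paper indicates: the paper does not give a formal proof of this lemma but states just before it that Equation \eqref{Chap:8:lem:equivalence} ``transports all results from classical Fourier theory to the framework of the Mellin transform using the so-called Euler change of variables,'' and then presents Parseval's identity as an immediate instance of this principle. Your argument makes this transport explicit --- the reduction to $\beta=0$ via $u_\beta(r)=r^{-\beta}u(r)$, the substitution $r=\exp(t)$, and the invocation of Plancherel for the unnormalised Fourier transform --- and the bookkeeping of the $\tfrac{1}{2\imath\pi}$ factor against $d\lambda=\imath\,d\xi$ and the $2\pi$ in Plancherel is handled correctly.
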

	
	\noindent 
	Lemma \ref{lem:Parseval} in particular extends the domain of definition of the Mellin transform.
	Indeed, we have the following result which follows from a classical density argument.\vspace{5mm}
	
	\begin{lemma}\label{lem:Mellin2}
		For every $\beta \in \RR$, set $\mathbb{R}_{\beta}:=\{\lambda\in\CC,\;\Re e\{\lambda\}=\beta\}$.
		Then the Mellin transform extends as an isometric isomorphism from $\calL^{2}_{\beta}(\RR_{+})$
		onto $ \mL^{2}(\RR_{\beta})$, where $\calL^{2}_{\beta}(\RR_{+})$ is defined as the completion of
		$\mathscr{C}_0^{\infty}(\RR_{+})$ with respect to the norm
		\begin{equation}\label{DefWeightedNorm}
			\Vert u\Vert_{\calL^{2}_{\beta}(\RR_{+})}^{2}:=
			\int_{0}^{\infty}\vert u(r)\vert^{2}r^{-2\beta} dr/r.
		\end{equation}
	\end{lemma}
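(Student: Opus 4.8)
The plan is to deduce the statement directly from Parseval's theorem (Lemma \ref{lem:Parseval}) by a standard completion/density argument. First I would observe that Lemma \ref{lem:Parseval}, read on the vertical line $\RR_{\beta}$ parameterised as $\lambda = \beta + \imath\xi$ with $\xi \in \RR$, says precisely that for every $u \in \mathscr{C}_0^{\infty}(\RR_+)$ one has $\Vert u\Vert_{\calL^{2}_{\beta}(\RR_+)}^2 = \frac{1}{2\pi}\int_{\RR}\vert \hat{u}(\beta+\imath\xi)\vert^2\,d\xi = \Vert \hat{u}\Vert_{\mL^2(\RR_\beta)}^2$, once $\mL^2(\RR_\beta)$ is equipped with the measure $\frac{1}{2\pi}\,d\xi$ (equivalently, $\frac{1}{2\imath\pi}\,d\lambda$ along the oriented line). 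Hence $\mathscr{M}$ is an isometry from the dense subspace $\mathscr{C}_0^{\infty}(\RR_+)$ of $\calL^{2}_{\beta}(\RR_+)$ into $\mL^2(\RR_\beta)$.

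Next I would invoke the elementary fact that an isometry defined on a dense subspace of a normed space extends uniquely to an isometry of the completions; since $\calL^{2}_{\beta}(\RR_+)$ is \emph{defined} as the completion of $\mathscr{C}_0^{\infty}(\RR_+)$ under $\Vert\cdot\Vert_{\calL^{2}_{\beta}(\RR_+)}$, the map $\mathscr{M}$ extends uniquely to a linear isometry $\mathscr{M}\colon \calL^{2}_{\beta}(\RR_+)\to \mL^2(\RR_\beta)$. It then remains to check surjectivity. For this I would use the relation \eqref{Chap:8:lem:equivalence}, namely $(\mathscr{M}u)(\imath\lambda) = \mathscr{F}(u\circ\exp)(\lambda)$: after the substitution $r = e^t$ the weighted norm becomes $\Vert u\Vert_{\calL^{2}_{\beta}(\RR_+)}^2 = \int_{\RR}\vert u(e^t)\vert^2 e^{-2\beta t}\,dt$, so that $u\mapsto (t\mapsto e^{-\beta t}u(e^t))$ is a unitary identification of $\calL^{2}_{\beta}(\RR_+)$ with $\mL^2(\RR)$, and under it $\mathscr{M}$ becomes (a multiplicative modulation of) the classical Fourier transform, which is onto $\mL^2(\RR)\cong \mL^2(\RR_\beta)$ by Plancherel's theorem. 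Surjectivity of $\mathscr{M}$ follows.

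I do not anticipate a genuine obstacle here; the only point requiring a little care is bookkeeping of constants and measure normalisation — one must fix the convention that $\mL^2(\RR_\beta)$ carries the normalised measure $\frac{1}{2\imath\pi}\,d\lambda$ along the vertical line so that the isometry is exact rather than merely a topological isomorphism. With that convention in place the argument is a two-line consequence of Lemma \ref{lem:Parseval} together with the Fourier picture supplied by \eqref{Chap:8:lem:equivalence}.
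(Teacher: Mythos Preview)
Your proposal is correct and matches the paper's approach exactly: the paper does not spell out a proof but simply remarks that the lemma ``follows from a classical density argument,'' which is precisely the Parseval-plus-completion argument you describe, with surjectivity read off from the Fourier correspondence \eqref{Chap:8:lem:equivalence}. The only bookkeeping caveat you flag (the $\tfrac{1}{2\imath\pi}\,d\lambda$ normalisation on $\mL^2(\RR_\beta)$) is indeed the implicit convention used throughout the paper.
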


	\subsection{Inversion formula}\label{PaleyWiener}
	
	Next, we introduce the so-called Hardy spaces which are intimately connected to the Mellin transform.
	The following result is a direct consequence of the Paley-Wiener theorem (see e.g. \cite[Chap.19]{MR924157}) 
	combined with Equation \eqref{Chap:8:lem:equivalence}.\vspace{5mm}
	
	\begin{lemma}[Hardy Spaces]\label{def:Hardy}
		For $\beta\in\RR$, define $\CC^{+}_{\beta}:=\{\lambda\in \CC\vert\,\Re e\{\lambda\}>\beta\}$.
		The Mellin transform isomorphically maps the subspace
		$\calL^{2}_{\beta}(1,\infty) := \{v\in \calL^{2}_{\beta}(\RR_+): v(x) = 0\;\text{for}\;x<1\}$
		onto the right Hardy space
		\begin{equation*}
			\Hardy^{+}(\RR_{\beta}):=\{u\in\mathcal{H}(\CC^{+}_{\beta}),\;\;\sup_{\alpha>\beta}
			\Vert u\Vert_{ \mL^{2}(\RR_{\alpha})}^{2}<\infty \}. 
		\end{equation*}
		Similarly define $\CC^{-}_{\beta}:=\{\lambda\in \CC\vert\,\Re e\{\lambda\}<\beta\}$. The Mellin transform
		isomorphically maps the subspace $\calL^{2}_{\beta}(0,1) := \{v\in \calL^{2}_{\beta}(\RR_+): v(x) = 0\;\text{for}\;x>1\}$
		onto the left Hardy space
		\begin{equation*}
			\Hardy^{-}(\RR_{\beta}):=\{u\in\mathcal{H}(\CC^{-}_{\beta}),\;\;\sup_{\alpha<\beta}
			\Vert u\Vert_{ \mL^{2}(\RR_{\alpha})}^{2}<\infty \}.
		\end{equation*}
	\end{lemma}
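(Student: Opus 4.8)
The plan is to transport the classical $\mL^2$ Paley--Wiener theorem through the Euler change of variables $r = e^{t}$, in exactly the same spirit in which Equation~\eqref{Chap:8:lem:equivalence} transports Parseval's theorem.

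First I would record the effect of this substitution \emph{twisted by the weight}: for $u \in \mathscr{C}^\infty_0(\RR_+)$ and $\beta\in\RR$, set $v_\beta(t) := e^{-\beta t}\, u(e^{t})$. A one-line computation shows that $u \mapsto v_\beta$ is an isometric isomorphism from $\calL^{2}_{\beta}(\RR_+)$ onto $\mL^{2}(\RR)$ and that
\[
  \mathscr{M}u(\beta + \imath\lambda) = \mathscr{F}(v_\beta)(\lambda) \qquad \forall\,\lambda\in\RR ,
\]
which is just \eqref{Chap:8:lem:equivalence} shifted off the imaginary axis. Under this identification, $u$ vanishing on $(0,1)$ corresponds precisely to $v_\beta$ supported in $[0,\infty)$, so that $\calL^{2}_\beta(1,\infty)$ is carried onto $\mL^2(0,\infty)$; symmetrically (using $t\mapsto -t$, i.e.\ $r\mapsto 1/r$), $\calL^2_\beta(0,1)$ is carried onto $\mL^2(-\infty,0)$.

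Next I would invoke the $\mL^2$ Paley--Wiener theorem \cite[Chap.~19]{MR924157}: the Fourier transform maps $\mL^2(0,\infty)$ bijectively onto the space of functions $G$ holomorphic on a half-plane $\{\,\Im\lambda<0\,\}$ (its orientation fixed by the Fourier convention implicit in \eqref{Chap:8:lem:equivalence}) with $\sup_{y<0}\Vert G(\cdot+\imath y)\Vert_{\mL^2(\RR)}<\infty$, each such $G$ being recovered from its $\mL^2$ boundary trace on $\RR$. It then only remains to push this half-plane statement through the affine bijection $\lambda\mapsto \mu = \beta+\imath\lambda$. One checks that this map sends $\{\Im\lambda<0\}$ onto $\CC^{+}_{\beta}=\{\Re\mu>\beta\}$, sends the horizontal line $\Im\lambda = y$ onto the vertical line $\RR_{\alpha}$ with $\alpha = \beta - y$, and hence turns the Paley--Wiener growth condition into exactly $\sup_{\alpha>\beta}\Vert \mathscr{M}u\Vert_{\mL^2(\RR_\alpha)}<\infty$. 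Composing the isometry $\calL^2_\beta(1,\infty)\to\mL^2(0,\infty)$ with the Paley--Wiener bijection yields that $\mathscr{M}$ carries $\calL^2_\beta(1,\infty)$ bijectively onto $\Hardy^{+}(\RR_\beta)$, a Hardy-space element being identified throughout with its $\mL^2(\RR_\beta)$ trace; in particular surjectivity produces, from $F\in\Hardy^+(\RR_\beta)$, the explicit preimage $u(r) := r^{\beta} g(\ln r)$ for $r>1$ and $u(r):=0$ for $r\le 1$, where $g := \mathscr{F}^{-1}\!\big(F(\beta+\imath\,\cdot\,)\big)\in\mL^2(0,\infty)$. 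The left Hardy space case is identical with $\mL^2(0,\infty)$ replaced by $\mL^2(-\infty,0)$, which moves the analyticity half-plane to $\CC^{-}_{\beta}$.

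The step that will require the most care is the bookkeeping of signs and orientations: matching the Fourier convention behind \eqref{Chap:8:lem:equivalence} with the half-plane appearing in Paley--Wiener, and then checking that $\mu=\beta+\imath\lambda$ lands in $\CC^{+}_{\beta}$ (and not $\CC^{-}_{\beta}$) while correctly reversing the monotonicity $y\mapsto\alpha=\beta-y$. Everything else --- the isometry, and the identification of a holomorphic Hardy function with its boundary values --- is immediate from Lemma~\ref{lem:Mellin2}, Lemma~\ref{lem:Parseval} and standard Hardy space theory, so no genuinely new argument is needed.
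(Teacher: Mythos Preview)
Your proposal is correct and follows exactly the route the paper indicates: the paper does not give a detailed proof but simply states that the lemma ``is a direct consequence of the Paley--Wiener theorem (see e.g.\ \cite[Chap.~19]{MR924157}) combined with Equation~\eqref{Chap:8:lem:equivalence}'', which is precisely the Euler change of variables plus Paley--Wiener argument you have written out. Your careful tracking of the weight, the support, and the sign/orientation bookkeeping under $\mu=\beta+\imath\lambda$ is exactly what is needed to make that one-line justification rigorous.
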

	
	\begin{remark}
		Using the inverse Fourier transform together with Equation \eqref{Chap:8:lem:equivalence}, we can also
		deduce an inversion formula for the Mellin transform. Indeed, let $u\in \calL^{2}_{\beta}(1,\infty)$ and
		let $\hat{u} = \mathscr{M}(u)$. Then for all $\alpha \geq \beta$ it holds that 
		\begin{equation}\label{InversionFormula}
			u(r) = \frac{1}{2\imath\pi}\int_{\alpha-\imath\infty}^{\alpha+\imath\infty}\hat{u}(\lambda)r^{\lambda} d\lambda
		\end{equation}
		where the integral should be understood in the sense of Fourier (see \cite[Theorem 9.13]{MR924157} or
		\cite[Proposition 22.1.6]{zbMATH01249106}). Similarly, if $u\in \calL^{2}_{\beta}(0,1)$ and
		$\hat{u} = \mathscr{M}(u)$, then the inversion formula~\eqref{InversionFormula} holds for all $\alpha \leq \beta$.
	\end{remark}\vspace{5mm}
	
	\begin{remark}\label{rem:3}
		Let $\alpha,\beta\in \RR$ with $\alpha<\beta$. A direct calculation shows that
		$\calL^{2}_{\beta}(0,1)\subset \calL^{2}_{\alpha}(0,1)$ and  $\calL^{2}_{\alpha}(1,\infty)\subset \calL^{2}_{\beta}(1,\infty)$.
		We can therefore deduce that $\calL^{2}_{\alpha}(\RR_{+})\cap \calL^{2}_{\beta}(\RR_{+}) = \calL^{2}_{\alpha}(1,\infty)\oplus
		\calL^{2}_{\beta}(0,1)$ and hence, due to Lemma \ref{def:Hardy}, the Mellin transform isomorphically maps $\calL^{2}_{\alpha}(\RR_{+})\cap
		\calL^{2}_{\beta}(\RR_{+})$ onto $\Hardy^{+}(\RR_{\alpha})\oplus\Hardy^{-}(\RR_{\beta})$ which should be understood as a space of functions that are analytic on the strip $\alpha<\Re e\{\lambda\}<\beta$. 
	\end{remark}
	
	\subsection{Norm characterisation using the Mellin transform}\label{Sec4:3}
	
	It is well known that the Fourier Transform can be used to derive an alternative characterisation of the
	classical Sobolev norms in Euclidean spaces $\mathbb{R}^n,~ n \in \mathbb{N}$ (see, e.g., \cite{di2012hitch}).
	A similar characterisation of both the classical and weighted Sobolev norms on $\mathbb{R}_+$ can be accomplished
	using the Mellin transform. Indeed, we recall from the Parseval identity for Mellin transforms (Lemma
	\ref{lem:Parseval}) that for all $\phi \in \mathscr{C}_0^{\infty}(\mathbb{R}_+)$ it holds that
	\begin{align}\label{eq:norm_L2}
		\Vert \phi\Vert^2_{\calL^2_{\beta -1/2}(\RR_{+})}=  \Vert x^{-\beta}\phi \Vert^2_{ \mL^2(\mathbb{R}_+)}= \frac{1}{2\imath \pi}
		\int_{\beta-1/2- \imath\infty}^{\beta-1/2+ \imath\infty}|\widehat{\phi}(\lambda)|^2\, d\lambda.
	\end{align}
	Of particular interest are the cases $\beta = 0$ and $\beta = 1/2$ (recall the weighted semi-norm introduced in Equation \eqref{eq:norms_2}). In addition, we have the following result due to Costabel and Stephan~\cite{zbMATH04070248}. 
	
	\vspace{5mm}
	\begin{lemma}[{\cite[Lemma 2.3]{zbMATH04070248}}]\label{lem:4}~
		There exists constants $C,C'> 1$ such that for all $\phi \in \mathscr{C}_0^{\infty}(\mathbb{R}_+)$ it holds that
		\begin{align*}
			\frac{C}{2\imath\pi}\int_{-\imath\infty}^{+\imath\infty} \frac{\vert \lambda \vert^2}{ 1 + \vert \lambda \vert}
			\vert \widehat{\phi}(\lambda)\vert^2\, d\lambda\leq | \phi |^2_{ \mH^{1/2}(\RR_{+}) }\leq \frac{C'}{2\imath \pi}
			\int_{-\imath\infty}^{+\imath\infty} \frac{\vert \lambda \vert^2}{ 1 + \vert \lambda \vert}\vert \widehat{\phi}(\lambda)\vert^2\, d\lambda.
		\end{align*}
	\end{lemma}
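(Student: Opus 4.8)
The plan is to transform the Sobolev--Slobodeckii seminorm on $\RR_+$ into a Fourier multiplier statement on $\RR$ by the Euler change of variables $r = e^s$, and then to estimate the resulting multiplier by elementary means. Set $\psi := \phi \circ \exp$; since $\phi \in \mathscr{C}^\infty_0(\RR_+)$ has bounded support contained in $(0,+\infty)$, the function $\psi$ belongs to $\mathscr{C}^\infty_c(\RR)$, and \eqref{Chap:8:lem:equivalence} yields $\widehat\phi(\imath\xi) = \mathscr{F}\psi(\xi)$ for every $\xi \in \RR$. Substituting $x = e^s$, $y = e^t$ in the double integral defining $|\phi|^2_{\mH^{1/2}(\RR_+)}$ and using $(e^s - e^t)^2 = 4\,e^{s+t}\sinh^2((s-t)/2)$, the factor $e^{s+t}$ coming from the Jacobian cancels the corresponding factor in the denominator, and one obtains
\[
  |\phi|^2_{\mH^{1/2}(\RR_+)} \;=\; \frac14 \int_{\RR}\int_{\RR} \frac{|\psi(s)-\psi(t)|^2}{\sinh^2((s-t)/2)}\, ds\, dt .
\]

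Next I would freeze $h = s-t$ and recognise $\int_{\RR} |\psi(t+h)-\psi(t)|^2\, dt = \|\tau_h\psi - \psi\|^2_{\mL^2(\RR)}$, where $\tau_h$ is the translation by $h$. Plancherel's theorem, in the normalisation consistent with Lemma \ref{lem:Parseval}, gives $\|\tau_h\psi-\psi\|^2_{\mL^2(\RR)} = \frac{1}{2\pi}\int_{\RR} 4\sin^2(h\xi/2)\,|\mathscr{F}\psi(\xi)|^2\, d\xi$; since all integrands are non-negative, Tonelli's theorem allows the order of integration to be exchanged. Parametrising the contour in the statement by $\lambda = \imath\xi$, so that $|\widehat\phi(\lambda)| = |\mathscr{F}\psi(\xi)|$, $|\lambda| = |\xi|$ and $\tfrac{1}{2\imath\pi}\, d\lambda = \tfrac{1}{2\pi}\, d\xi$, the lemma becomes equivalent to the uniform pointwise bound $m(\xi) \asymp |\xi|^2/(1+|\xi|)$ for $\xi \in \RR$, where the multiplier is $m(\xi) := \int_{\RR} \sin^2(h\xi/2)/\sinh^2(h/2)\, dh$.

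To establish this bound I would split $1/\sinh^2(h/2) = 4/h^2 + \rho(h)$ with $\rho(h) := 1/\sinh^2(h/2) - 4/h^2$. Since $1/\sinh^2 v = 1/v^2 - 1/3 + O(v^2)$ near $0$, the function $\rho$ is bounded near $h = 0$, while $\rho(h) \sim -4/h^2$ as $|h| \to \infty$; hence $\rho \in \mL^1(\RR)$. The contribution of $4/h^2$ is evaluated exactly: the substitution $u = h\xi/2$ and the standard identity $\int_{\RR} \sin^2(u)/u^2\, du = \pi$ give $\int_{\RR} 4\sin^2(h\xi/2)/h^2\, dh = 2\pi|\xi|$. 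The contribution of $\rho$ equals $\tfrac12\int_{\RR}\rho - \tfrac12\mathscr{F}\rho(\xi)$, which is bounded in modulus by $\|\rho\|_{\mL^1(\RR)}$ uniformly in $\xi$; hence $m(\xi) = 2\pi|\xi| + O(1)$, and together with the continuity and strict positivity of $m$ on $\RR\setminus\{0\}$ this yields $m(\xi) \asymp |\xi|$ for $|\xi| \ge 1$. For $|\xi| \le 1$ one bounds $m$ directly: on a fixed neighbourhood of $h = 0$ one has $\sin^2(h\xi/2) \asymp (h\xi/2)^2$ and $\sinh^2(h/2) \asymp h^2$, while $1/\sinh^2(h/2)$ decays exponentially away from $0$, so $m(\xi) \asymp \xi^2$. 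Since $|\xi|^2/(1+|\xi|) \asymp |\xi|$ for $|\xi| \ge 1$ and $\asymp \xi^2$ for $|\xi| \le 1$, the two regimes combine to the required equivalence; one checks moreover that the extremal values of $m(\xi)(1+|\xi|)/|\xi|^2$ exceed $1$, so the constants may be taken as in the statement.

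The step I expect to be the main obstacle is the uniform control of the multiplier $m$ across the transition region $|\xi| \sim 1$, where neither the small-$\xi$ nor the large-$\xi$ asymptotics dominate; the decomposition $1/\sinh^2(h/2) = 4/h^2 + \rho(h)$ is precisely what isolates the exact linear growth $2\pi|\xi|$ and reduces the remainder to an application of the Riemann--Lebesgue lemma. Alternatively, one may dispense with the splitting and use the closed form $m(\xi) = 2\pi|\xi|\coth(\pi|\xi|) - 2$ together with the elementary bound $x\coth x - 1 \asymp x^2/(1+x)$ for $x \ge 0$.
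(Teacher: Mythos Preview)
The paper does not supply a proof of this lemma: it is quoted verbatim from Costabel--Stephan \cite[Lemma~2.3]{zbMATH04070248} and used as a black box, so there is no argument in the present paper to compare yours against.

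Your proof is correct and self-contained. The Euler substitution $r=e^s$ turns the Slobodeckii double integral on $\RR_+$ into a translation-difference integral on $\RR$, Plancherel converts this into the multiplier $m(\xi)=\int_\RR \sin^2(h\xi/2)/\sinh^2(h/2)\,dh$, and the parametrisation $\lambda=\imath\xi$ matches the contour integral in the statement via \eqref{Chap:8:lem:equivalence} and Lemma~\ref{lem:Parseval}. Your estimate of $m$---either through the splitting $1/\sinh^2(h/2)=4/h^2+\rho(h)$ with $\rho\in\mL^1(\RR)$, which isolates the exact contribution $2\pi|\xi|$, or through the closed form $m(\xi)=2\pi|\xi|\coth(\pi|\xi|)-2$---yields the required two-sided bound $m(\xi)\asymp|\xi|^2/(1+|\xi|)$. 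One small point to make explicit in the regime $|\xi|\le 1$: for the upper bound on the tail $|h|>1$ you should invoke $\sin^2(h\xi/2)\le(h\xi/2)^2$ together with $\int_{|h|>1} h^2/\sinh^2(h/2)\,dh<\infty$, rather than merely $\sin^2\le 1$, in order to recover the factor $\xi^2$; this is consistent with what you wrote but was left implicit. The remark about the constants exceeding $1$ is a cosmetic feature of how the lemma is stated and follows from the limiting values $2\pi^2/3$ and $2\pi$ of $m(\xi)(1+|\xi|)/|\xi|^2$ at $\xi\to 0$ and $|\xi|\to\infty$.
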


	\noindent
	Lemma \ref{lem:4} therefore allows us to obtain characterisations of the $\Vert \cdot \Vert_{\mH^{1/2}(\mathbb{R}_+)}$ and $ \Vert \cdot \Vert_{\tilde{\mH}^{1/2}(\RR_+)}$ norms introduced in Section \ref{Chap:8:sec:2} in terms of the Mellin transform which will be of use in the sequel.

	\section{Mellin analysis of Riesz potentials}\label{Chap:8:sec:5}
	
	We now return to our analysis of the Riesz potential on the polygonal boundary~$\partial \Omega$.
	We have shown in detail in Section \ref{Chap:8:sec:3} that establishing the mapping properties of the Riesz potential
	reduces to the study of localised `corner' operators. More precisely, we need to investigate (see Proposition \ref{prop:new} and Equation~\eqref{CornerBilinearForm0}) the multiplicative convolution operator $\mathscr{K}_{\theta}\colon \mathscr{C}_0^{\infty}(\RR_{+})\rightarrow \mathscr{C}_0^{\infty}(\RR_{+})^*$
	defined by 
	\begin{equation}\label{DefOperatorDiff}
		\begin{aligned}
			& \mathscr{K}_{\theta}(u):= \int_{0}^\infty
			(st)^{-1/4}\mathfrak{K}_{\theta}(t/s)u(s) ds\\
			&\text{with}\quad \mathfrak{K}_{\theta}(\tau):= (4\sin^2(\theta)
			+(\sqrt{\tau}-1/\sqrt{\tau})^{2}\;)^{-1/4}, \quad \theta \in (0, \pi).
		\end{aligned}
	\end{equation}
	As claimed in Section \ref{Chap:8:sec:3}, the appropriate tool to study the operator $\mathscr{K}_{\theta}$
	is the Mellin transform. As a first step, let us check that $\mathscr{K}_{\theta}(u)$ belongs to some weighted Lebesgue space so that it lends itself to Mellin calculus. \vspace{5mm}
	
	\begin{lemma}
		For $\theta\in (0,\pi)$ and $u\in \mathscr{C}^\infty_0(\RR_+)$ we have $\mathscr{K}_{\theta}(u)\in
		\calL^2_{-\beta}(\RR_+)~~\forall\beta\in (0,1/2)$.
	\end{lemma}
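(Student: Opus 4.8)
The plan is to bound $\mathscr{K}_\theta(u)$ pointwise with the correct decay at $0$ and at $+\infty$, and then to integrate against the weight $r^{2\beta-1}$ coming from the norm \eqref{DefWeightedNorm}.

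First I would record a pointwise bound on the kernel. Using $(\sqrt{\tau}-1/\sqrt{\tau})^2=(\tau-1)^2/\tau$ and the identity $4\sin^2\theta-2=-2\cos 2\theta$, one checks the exact decomposition
\begin{equation*}
\mathfrak{K}_\theta(\tau)^{-4}=\tau+\frac1\tau-2\cos 2\theta=\sin^2\theta\,\frac{(1+\tau)^2}{\tau}+\cos^2\theta\,\frac{(1-\tau)^2}{\tau}\ \ge\ \sin^2\theta\,\frac{(1+\tau)^2}{\tau},
\end{equation*}
both summands being nonnegative. Since $\theta\in(0,\pi)$ forces $\sin^2\theta>0$, this yields
\begin{equation*}
\mathfrak{K}_\theta(\tau)\le(\sin\theta)^{-1/2}\,\tau^{1/4}(1+\tau)^{-1/2},\qquad \tau>0,
\end{equation*}
which is the only property of $\mathfrak{K}_\theta$ I will use; positivity of $\sin\theta$ is essential here, whereas for a flat configuration $\mathfrak{K}_0$ is genuinely singular at $\tau=1$.

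Next, since $u\in\mathscr{C}^\infty_0(\RR_+)$ its support lies in some interval $[a,b]$ with $0<a<b<\infty$, so the integral \eqref{DefOperatorDiff} defining $\mathscr{K}_\theta(u)(t)$ converges for every $t>0$ (the kernel being continuous in $s$ because $\theta\in(0,\pi)$). Inserting the bound above with $\tau=t/s$ and simplifying via $(st)^{-1/4}(t/s)^{1/4}=s^{-1/2}$ and $s^{-1/2}(1+t/s)^{-1/2}=(s+t)^{-1/2}$, together with $s+t\ge a+t$ for $s\ge a$, I obtain
\begin{equation*}
\vert\mathscr{K}_\theta(u)(t)\vert\le(\sin\theta)^{-1/2}\int_a^b(s+t)^{-1/2}\,\vert u(s)\vert\,ds\le(\sin\theta)^{-1/2}\,\Vert u\Vert_{\mL^1(\RR_+)}\,(a+t)^{-1/2}.
\end{equation*}
Thus $\mathscr{K}_\theta(u)$ stays bounded near the origin and decays like $t^{-1/2}$ at infinity. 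Plugging this into the weighted norm, with $C:=(\sin\theta)^{-1}\Vert u\Vert_{\mL^1(\RR_+)}^2$,
\begin{equation*}
\Vert\mathscr{K}_\theta(u)\Vert_{\calL^2_{-\beta}(\RR_+)}^2=\int_0^\infty\vert\mathscr{K}_\theta(u)(t)\vert^2\,t^{2\beta-1}\,dt\le C\int_0^\infty\frac{t^{2\beta-1}}{a+t}\,dt,
\end{equation*}
and splitting the last integral at $t=1$ shows it is finite exactly when $\beta>0$ (integrability at the origin, where the integrand is $\le a^{-1}t^{2\beta-1}$) and $\beta<1/2$ (integrability at infinity, where it is $\le t^{2\beta-2}$). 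This gives the claim for every $\beta\in(0,1/2)$. The one point requiring care is the sharp identification of the decay rates $\tau^{\pm1/4}$ of $\mathfrak{K}_\theta$ at $0$ and $\infty$ — precisely what the first-step bound encodes; once that is in hand, the rest is a routine integrability check.
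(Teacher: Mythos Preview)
Your argument is correct. The algebraic identity for $\mathfrak{K}_\theta(\tau)^{-4}$ is right, the resulting kernel bound $\mathfrak{K}_\theta(\tau)\le(\sin\theta)^{-1/2}\tau^{1/4}(1+\tau)^{-1/2}$ is valid for all $\theta\in(0,\pi)$, the simplification to $(s+t)^{-1/2}$ is clean, and the final integrability check against the weight $t^{2\beta-1}$ gives exactly the range $\beta\in(0,1/2)$.

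Your route is, however, genuinely different from the paper's. The paper does not exploit the compact support of $u$ through a pointwise bound. Instead it rewrites $t^{\beta}\mathscr{K}_\theta(u)(t)$ as a multiplicative convolution $\int_0^\infty\widetilde{\mathfrak{K}}(t/s)\widetilde{u}(s)\,ds/s$ with $\widetilde{\mathfrak{K}}(\tau)=\tau^{\beta-1/4}\mathfrak{K}_\theta(\tau)$ and $\widetilde{u}(s)=s^{3/4}u(s)$, and then applies a Cauchy--Schwarz/Young-type inequality to get
\[
\Vert\mathscr{K}_\theta(u)\Vert_{\calL^2_{-\beta}(\RR_+)}\le\Big(\int_0^\infty\widetilde{\mathfrak{K}}(\xi)\,\frac{d\xi}{\xi}\Big)\,\Big(\int_0^\infty\vert\widetilde{u}(s)\vert^2\,\frac{ds}{s}\Big)^{1/2}.
\]
Both approaches ultimately rest on the same asymptotics $\mathfrak{K}_\theta(\tau)\sim\tau^{\pm1/4}$ at $0$ and $+\infty$; you encode them via the explicit majorant $\tau^{1/4}(1+\tau)^{-1/2}$, while the paper encodes them via the integrability of $\widetilde{\mathfrak{K}}(\tau)/\tau$. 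Your version is more elementary and yields a fully explicit pointwise estimate, at the cost of a constant depending on the left endpoint $a$ of $\mrm{supp}(u)$ and on $\Vert u\Vert_{\mL^1}$. The paper's convolution argument is slightly more structural: it produces an operator-type bound in terms of a weighted $\mL^2$ norm of $u$, independent of the location of the support, which is closer in spirit to the Mellin calculus used immediately afterwards.
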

	\begin{proof}
		Pick $\beta\in (0,1/2)$ and set $\widetilde{u}(s) =  s^{3/4}u(s)$ and $\widetilde{\mathfrak{K}}(\tau)=\tau^{\beta-1/4}\mathfrak{K}_{\theta}(\tau)$.
		It follows that $\widetilde{u}\in \mathscr{C}^\infty_0(\RR_+)$ and $\widetilde{\mathfrak{K}}\in\mathscr{C}^\infty(\RR_+)$ with
		$\widetilde{\mathfrak{K}}(\tau)\sim \tau^\beta$ for $\tau\to 0_{+}$ and $\widetilde{\mathfrak{K}}(\tau)\sim \tau^{\beta-1/2}$ for $\tau\to+\infty$. We can therefore conclude in particular that $\int_{0}^{\infty}\widetilde{\mathfrak{K}}(\tau) d\tau/\tau<+\infty$.
		
		Applying now the definition  of the weighted norm \eqref{DefWeightedNorm} to $\mathscr{K}_{\theta}(u)$ yields 
		\begin{equation}\label{ConvolutionEstimate}
			\begin{aligned}
				\Vert \mathscr{K}_{\theta}(u)\Vert_{\calL^2_{-\beta}(\RR_+)}^2
				& = \int_{0}^\infty\Big\vert \int_{0}^\infty(st)^{-1/4}\mathfrak{K}_{\theta}(t/s)u(s) ds\Big\vert^{2}t^{2\beta}dt/t\\
				& = \int_{0}^\infty\Big\vert\int_{0}^\infty\widetilde{\mathfrak{K}}(t/s)\widetilde{u}(s) ds/s\Big\vert^{2}dt/t.
			\end{aligned}
		\end{equation}
		From here we simply adapt a classical proof on convolution calculus (see e.g \cite[Thm.4.15]{zbMATH05633610}).
		Applying the Cauchy-Schwarz inequality and using the change of variable $\xi=t/s$, we obtain the estimate 
		\begin{equation}\label{ConvolutionEstimate2}
			\Big\vert\int_{0}^\infty\widetilde{\mathfrak{K}}(t/s)\widetilde{u}(s) ds/s\Big\vert^{2}\leq
			\Big(\int_{0}^{\infty}\widetilde{\mathfrak{K}}(\xi) d\xi/\xi\Big)\Big(
			\int_{0}^{\infty}\widetilde{\mathfrak{K}}(t/s)\vert \widetilde{u}(s)\vert^{2} ds/s\Big).
		\end{equation}
		
		As $\widetilde{u}\in\mathscr{C}^\infty_0(\RR_{+})$, we obviously have $\int_{0}^{\infty}\vert \widetilde{u}(s)\vert^{2} ds/s<+\infty$. Consequently, plugging \eqref{ConvolutionEstimate2} into~\eqref{ConvolutionEstimate} and applying Fubini's theorem leads to
		\begin{equation*}
			\begin{aligned}
				\Vert \mathscr{K}_{\theta}(u)\Vert_{\calL^2_{-\beta}(\RR_+)}^2
				& \leq \Big(\int_{0}^{\infty}\widetilde{\mathfrak{K}}(\xi) d\xi/\xi\Big)
				\int_{0}^{\infty}\Big(\int_{0}^{\infty} \widetilde{\mathfrak{K}}(t/s) dt/t\Big) \vert \widetilde{u}(s)\vert^{2} ds/s\\
				& \leq \Big(\int_{0}^{\infty}\widetilde{\mathfrak{K}}(\xi) d\xi/\xi\Big)^{2}
				\int_{0}^{\infty}\vert \widetilde{u}(s)\vert^{2} ds/s<+\infty.
			\end{aligned}
		\end{equation*}
		\vspace{-2mm}
	\end{proof}
	
	Lemma \ref{ConvolutionEstimate} implies that for any $u\in \mathscr{C}^{\infty}_0(\RR_{+})$,
	we have $\mathscr{K}_{\theta}(u)\in\calL^{2}_{\alpha}(\RR_{+})\cap \calL^{2}_{\beta}(\RR_{+})$
	for $-1/2<\alpha < \beta < 0$. In particular, according to Remark \ref{rem:3}, the Mellin
	transform of $\mathscr{K}_{\theta}(u)$ is properly defined and analytic in the strip $-1/2 <\Re e\{\lambda\}<0$.
	The next lemma provides an expression for the Mellin transform of $\mathscr{K}_{\theta}(u)$ in this strip. \vspace{5mm}
	
	
	\begin{lemma}\label{lem:op2}
		For $\theta \in (0, \pi)$, let $\mathscr{K}_{\theta}:\mathscr{C}_0^{\infty}(\RR_{+})
		\rightarrow \mathscr{C}_0^{\infty}(\RR_{+})^*$ be defined through Equation \eqref{DefOperatorDiff}.
		Then for each $\lambda \in \mathbb{C}$ such that $ -1/2<\Re e\{\lambda\}<0$ and all $u \in \mathscr{C}_0^{\infty}(\RR_+)$ we have
		\begin{equation*}
			\begin{aligned}
				&\widehat{\mathscr{K}_{\theta}(u)}(\lambda)= 
				\widehat{\mathfrak{K}}_{\theta}(\lambda+1/4) \widehat{u}(\lambda-1/2)\\
				&\text{where}\;\;\widehat{\mathfrak{K}}_{\theta}(\lambda) := \int_{0}^{+\infty}
				\mathfrak{K}_{\theta}(r)r^{-\lambda} dr/r.
			\end{aligned}
		\end{equation*}
	\end{lemma}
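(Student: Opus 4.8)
The plan is to recognise $\mathscr{K}_{\theta}$ as essentially a multiplicative (Mellin) convolution and to use the fact, inherited from the Fourier transform via the Euler change of variables \eqref{Chap:8:lem:equivalence}, that the Mellin transform turns multiplicative convolution into pointwise multiplication. First I would redistribute the weight inside the kernel: for $u\in\mathscr{C}_0^{\infty}(\RR_+)$,
\[
	\mathscr{K}_{\theta}(u)(t)= t^{-1/4}\int_{0}^{\infty}\mathfrak{K}_{\theta}(t/s)\,\bigl(s^{3/4}u(s)\bigr)\,\frac{ds}{s},
\]
so that $\mathscr{K}_{\theta}(u)= t^{-1/4}\,\bigl(\mathfrak{K}_{\theta}\star v\bigr)$ with $v(s):=s^{3/4}u(s)\in\mathscr{C}_0^{\infty}(\RR_+)$ and $\star$ the multiplicative convolution on $\RR_+$. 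Before transforming I would record that the integral defining $\widehat{\mathfrak{K}}_{\theta}(\mu)$ converges, and defines an analytic function, precisely on the strip $-1/4<\Re e\{\mu\}<1/4$: the asymptotics $\mathfrak{K}_{\theta}(\tau)\sim\tau^{1/4}$ as $\tau\to0^{+}$ and $\mathfrak{K}_{\theta}(\tau)\sim\tau^{-1/4}$ as $\tau\to+\infty$ (already used in the proof of the preceding lemma) make $\mathfrak{K}_{\theta}(r)\,r^{-\Re e\{\mu\}}\,dr/r$ integrable at both endpoints exactly on that strip. For $\lambda$ in the strip $-1/2<\Re e\{\lambda\}<0$ of interest, the shift places $\lambda+1/4$ inside $(-1/4,1/4)$, so $\widehat{\mathfrak{K}}_{\theta}(\lambda+1/4)$ is indeed well-defined.

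Next I would evaluate $\widehat{\mathscr{K}_{\theta}(u)}(\lambda)$ directly. Since $\mathscr{K}_{\theta}(u)\in\calL^{2}_{\alpha}(\RR_+)\cap\calL^{2}_{\beta}(\RR_+)$ for every $-1/2<\alpha<\beta<0$ (by the previous lemma together with Remark~\ref{rem:3}), its Mellin transform on the open strip $-1/2<\Re e\{\lambda\}<0$ agrees, where the latter converges absolutely, with the Lebesgue integral $\int_{0}^{\infty}\mathscr{K}_{\theta}(u)(t)\,t^{-\lambda}\,dt/t$, and one checks from the asymptotics of $\mathscr{K}_{\theta}(u)$ at $0$ and $\infty$ that this integral is absolutely convergent on that strip; hence it suffices to compute it. Inserting the definition and interchanging the two integrations — legitimate by Tonelli, using $\mathfrak{K}_{\theta}\ge0$, the compactness of $\supp u$ in $(0,\infty)$, and the finiteness of $\int_{0}^{\infty}\mathfrak{K}_{\theta}(r)\,r^{-\Re e\{\lambda\}-1/4}\,dr/r$ just established — gives
\[
	\widehat{\mathscr{K}_{\theta}(u)}(\lambda)=\int_{0}^{\infty}s^{-1/4}u(s)\Bigl(\int_{0}^{\infty}t^{-1/4}\mathfrak{K}_{\theta}(t/s)\,t^{-\lambda}\,\frac{dt}{t}\Bigr)ds .
\]
The substitution $r=t/s$ in the inner integral turns it into $s^{-\lambda-1/4}\,\widehat{\mathfrak{K}}_{\theta}(\lambda+1/4)$, and the remaining outer integral equals $\widehat{\mathfrak{K}}_{\theta}(\lambda+1/4)\int_{0}^{\infty}u(s)\,s^{-(\lambda-1/2)}\,ds/s=\widehat{\mathfrak{K}}_{\theta}(\lambda+1/4)\,\widehat{u}(\lambda-1/2)$, which is the asserted identity. (Equivalently, one may apply the Mellin convolution theorem to $\mathfrak{K}_{\theta}\star v$ and then compose the two elementary shift rules $\widehat{t^{-1/4}w}(\lambda)=\widehat{w}(\lambda+1/4)$ and $\widehat{v}(\mu)=\widehat{u}(\mu-3/4)$.)

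I do not anticipate a serious obstacle: the computation is largely bookkeeping. The two points that deserve care are (i) the Fubini/Tonelli interchange, which rests entirely on the decay of $\mathfrak{K}_{\theta}$ at $0$ and $\infty$ and on $\supp u$ being a compact subset of $(0,\infty)$, and (ii) the fact that the identity must be read for the $\calL^{2}$-extended Mellin transform of $\mathscr{K}_{\theta}(u)$ — which is the object denoted $\widehat{\mathscr{K}_{\theta}(u)}$, since $\mathscr{K}_{\theta}(u)$ need not lie in $\mathscr{C}_0^{\infty}(\RR_+)$ — and not merely for a formal integral; this is settled by the consistency of the $\calL^{2}$ Mellin transform with the convergent Lebesgue integral on the common strip of definition and by the analyticity supplied by Remark~\ref{rem:3}. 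Finally, keeping the three half-integer shifts straight — the $+1/4$ coming from the outer weight $t^{-1/4}$, the $+1/4$ produced inside the change of variables, and the resulting net $-1/2$ in the argument of $\widehat{u}$ — is the only real opportunity for an arithmetic slip.
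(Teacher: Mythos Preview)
Your proposal is correct and follows essentially the same route as the paper: insert the definition of $\mathscr{K}_{\theta}(u)$ into the Mellin integral, interchange the order of integration, and perform the change of variables $r=t/s$ in the inner integral to factor out $\widehat{\mathfrak{K}}_{\theta}(\lambda+1/4)$, leaving $\widehat{u}(\lambda-1/2)$. The paper's proof is just this direct computation without further comment; your additional remarks on the Tonelli justification and on interpreting $\widehat{\mathscr{K}_{\theta}(u)}$ via the $\calL^2$ extension are sound and fill in details the paper leaves implicit.
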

	\begin{proof}
		The proof follows by a direct calculation. Indeed, using the definition of the Mellin transform
		for $\lambda \in \mathbb{C}$ and applying the change of variables $\xi:=t/s$  yields
		\begin{equation*}
			\begin{aligned}
				\widehat{\mathscr{K}_{\theta}(u)}(\lambda)
				&= \int_0^{\infty}\!\!\!\int_{0}^{\infty}\mathfrak{K}_{\theta}({t}/{s})u(s)
				s^{\nicefrac{3}{4}} t^{-\lambda -\nicefrac{1}{4}}\; \frac{ds dt}{st}
				= \int_0^{\infty}\!\!\left(\int_{0}^{\infty} \mathfrak{K}_{\theta}({t}/{s}) t^{-\lambda -\nicefrac{1}{4}}
				\; \frac{dt}{t}\right)u(s) s^{\nicefrac{3}{4}}\;\frac{ds}{s}\\
				& =  \int_0^{\infty}\!\!\left(\int_{0}^{\infty} \mathfrak{K}_{\theta}(\xi) \xi^{-\lambda -\nicefrac{1}{4}}\;
				\frac{d\xi}{\xi}\right)u(s) s^{-\lambda +\nicefrac{1}{2}}\;\frac{ds}{s}=\widehat{\mathfrak{K}}_{\theta}(\lambda + 1/4) \widehat{u}(\lambda-1/2).
			\end{aligned}
		\end{equation*}	
		\vspace{-2mm}
	\end{proof}

	In the remainder of this section, we will investigate the properties of $\widehat{\mathfrak{K}}_\theta(\lambda)$,
	and understand its regularity properties and asymptotic behaviour in the complex plane. Equipped with this knowledge,
	we will be able to characterise more precisely the continuity properties of $\mathscr{K}_{\theta}$
	using the Mellin characterisation of Sobolev norms described in Section \ref{Sec4:3}.
	As a first step, we establish that the Mellin symbol $\widehat{\mathfrak{K}}_{\theta}$ is analytic on a strip
	in the complex plane.
	\vspace{5mm}
	
	%
	\begin{proposition}\label{prop:1}
		For all $\alpha\in (0,\pi)$, the Mellin symbol
		$\widehat{\mathfrak{K}}_{\alpha}(\lambda)$ is well defined and
		analytic in the strip defined by $\vert \Re e\{\lambda\}\vert <1/4$. Additionally, $\widehat{\mathfrak{K}}_{\pi/2}(\lambda) -   \widehat{\mathfrak{K}}_{\alpha}(\lambda)$
		is well defined and analytic in the strip $\vert \Re e\{\lambda\}\vert <5/4$.
	\end{proposition}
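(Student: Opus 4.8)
The claim is about the Mellin symbol
$$\widehat{\mathfrak{K}}_{\alpha}(\lambda) = \int_0^{\infty} \mathfrak{K}_{\alpha}(r)\, r^{-\lambda}\, \frac{dr}{r}, \qquad \mathfrak{K}_{\alpha}(\tau) = \big(4\sin^2(\alpha) + (\sqrt{\tau} - 1/\sqrt{\tau})^2\big)^{-1/4}.$$
The strategy is the standard one for Mellin symbols: a defining integral $\int_0^\infty f(r) r^{-\lambda}\,dr/r$ converges and is analytic precisely on the vertical strip $a < \Re e\{\lambda\} < b$, where $a$ and $b$ are governed by the power-law behaviour of $f$ at $0$ and at $+\infty$ respectively. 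So first I would extract the asymptotics of $\mathfrak{K}_\alpha$. As $\tau \to 0^+$, the term $(\sqrt\tau - 1/\sqrt\tau)^2 \sim 1/\tau$ dominates (assuming $\alpha \neq 0$, so $4\sin^2\alpha$ is a harmless bounded term — but even when $\alpha$ could be near multiples of $\pi$ we have $\alpha\in(0,\pi)$ so this is fine), giving $\mathfrak{K}_\alpha(\tau) \sim \tau^{1/4}$; symmetrically, as $\tau\to+\infty$, $(\sqrt\tau - 1/\sqrt\tau)^2 \sim \tau$, giving $\mathfrak{K}_\alpha(\tau)\sim \tau^{-1/4}$. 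Since $\mathfrak{K}_\alpha$ is smooth and strictly positive on $(0,\infty)$ (the argument under the root is $\ge 4\sin^2\alpha > 0$), the integrand $\mathfrak{K}_\alpha(r) r^{-\Re e\{\lambda\}-1}$ is integrable near $0$ iff $1/4 - \Re e\{\lambda\} > 0$ and near $\infty$ iff $-1/4 - \Re e\{\lambda\} < 0$, i.e. on the strip $|\Re e\{\lambda\}| < 1/4$. Analyticity on this open strip then follows from the usual Morera/Fubini argument (dominated convergence gives continuity in $\lambda$ on compact substrips, and integrating over triangles commutes with $\int_0^\infty$), exactly as the paper already invoked for the entire-ness of the Mellin transform of $\mathscr{C}^\infty_0$ functions.

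For the second assertion I would look at the \emph{difference} $\mathfrak{D}(\tau) := \mathfrak{K}_{\pi/2}(\tau) - \mathfrak{K}_\alpha(\tau)$, whose Mellin transform is $\widehat{\mathfrak{K}}_{\pi/2} - \widehat{\mathfrak{K}}_\alpha$. The key observation is that the difference of the two kernels decays much faster at both ends than either kernel individually, because the leading power-law terms cancel. Concretely, write $g(\tau) := (\sqrt\tau - 1/\sqrt\tau)^2 = \tau + 1/\tau - 2$ and set $a := 4\sin^2(\pi/2) = 4$, $b := 4\sin^2(\alpha) \in [0,4]$ (note $a \neq b$ unless $\alpha = \pi/2$, in which case the statement is trivially $0$). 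Then
$$\mathfrak{D}(\tau) = (a + g(\tau))^{-1/4} - (b + g(\tau))^{-1/4}.$$
As $\tau\to+\infty$ (and symmetrically $\tau\to0^+$ by the symmetry $\tau\mapsto1/\tau$ under which $g$, hence $\mathfrak{D}$, is invariant), $g(\tau)\to\infty$, and a first-order Taylor expansion of $x\mapsto x^{-1/4}$ at $x = g(\tau)$ gives $\mathfrak{D}(\tau) = -\tfrac14 g(\tau)^{-5/4}(a-b) + O(g(\tau)^{-9/4})$, so $\mathfrak{D}(\tau) = O(\tau^{-5/4})$ at $\infty$ and $O(\tau^{5/4})$ at $0$. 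Since $\mathfrak{D}$ is again smooth on $(0,\infty)$, the defining integral $\int_0^\infty \mathfrak{D}(r) r^{-\lambda}\,dr/r$ now converges and is analytic on the wider strip $|\Re e\{\lambda\}| < 5/4$, by the same integrability-plus-Morera argument. One should check that on the overlap strip $|\Re e\{\lambda\}|<1/4$ this integral indeed equals $\widehat{\mathfrak{K}}_{\pi/2}(\lambda) - \widehat{\mathfrak{K}}_\alpha(\lambda)$ — this is immediate from linearity of the integral on that strip where both pieces converge absolutely — so the function defined on the big strip is the analytic continuation of the difference.

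\textbf{Main obstacle.} There is no deep obstacle here; the whole argument is bookkeeping of power-law asymptotics plus a routine analyticity argument. The one point requiring genuine care is making the asymptotic expansion of $\mathfrak{D}$ rigorous enough to extract the gain of exactly two units in the decay exponent — i.e. showing that the cancellation of the leading term upgrades $\tau^{-1/4}$ to $\tau^{-5/4}$ and not merely to something between $-1/4$ and $-5/4$. The clean way is to write, via the integral form of the remainder, $\mathfrak{D}(\tau) = -\tfrac{a-b}{4}\int_0^1 (b + g(\tau) + s(a-b))^{-5/4}\,ds$, which is manifestly $O(g(\tau)^{-5/4})$ with a constant uniform in $\tau$ once $g(\tau)$ is bounded below (which holds automatically since $g(\tau)\ge 0$, and in fact $b + g(\tau) \ge b$; if $b = 0$ one uses instead that $g(\tau)\to\infty$ at both ends and is bounded below away from $\tau = 1$, handling a neighbourhood of $\tau=1$ trivially since $\mathfrak{D}$ is continuous there). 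This gives the exponent $5/4$ at both ends and hence the strip $|\Re e\{\lambda\}| < 5/4$.
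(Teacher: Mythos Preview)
Your proposal is correct and follows the same high-level strategy as the paper: determine the power-law behaviour of $\mathfrak{K}_\alpha$ (resp.\ $\mathfrak{K}_{\pi/2}-\mathfrak{K}_\alpha$) at $0$ and $+\infty$, then read off the strip of analyticity of the Mellin integral. The only real difference is tactical, in how the asymptotics are extracted. The paper rewrites $\mathfrak{K}_\alpha(\tau)=\tau^{1/4}(1-(2\cos(2\alpha)\tau-\tau^2))^{-1/4}$ and expands via the generalised binomial series to obtain $\mathfrak{K}_\alpha(\tau)=\tau^{1/4}+\sum_{n\ge1}\kappa_{\alpha,n}\tau^{n+1/4}$ near $0$; the leading term is visibly independent of $\alpha$, so the difference starts at $\tau^{5/4}$. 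You instead use the integral remainder form $(a+g)^{-1/4}-(b+g)^{-1/4}=-\tfrac{a-b}{4}\int_0^1(b+g+s(a-b))^{-5/4}\,ds$ to get the $O(g(\tau)^{-5/4})$ bound directly. Both arguments are clean and yield exactly the same exponents; yours is arguably more self-contained for this proposition alone. One small remark: since $\alpha\in(0,\pi)$ you always have $b=4\sin^2\alpha>0$, so your separate discussion of the case $b=0$ is unnecessary. It is worth knowing, however, that the paper's series expansion \eqref{SeriesExpansion} is not throwaway: the coefficients $\kappa_{\alpha,n}$ reappear in the proof of the next proposition, where the precise pole structure of $\widehat{\mathfrak{K}}_\alpha$ on all of $\CC$ is worked out by peeling off finitely many terms of this series against a cutoff. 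Your integral-remainder trick would not directly feed into that later argument.
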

	\begin{proof}  
		We have by definition (see Equation~\eqref{eq:lem:basic2}) that $\mathfrak{K}_{\alpha}(\tau)=(4\sin^2(\alpha)+
		(\sqrt{\tau}-1/\sqrt{\tau})^{2}\;)^{-1/4}$.  Simple algebra
		yields that the kernel $\mathfrak{K}_{\alpha}$ can equivalently be written as
		$\mathfrak{K}_{\alpha}(\tau) = \tau^{1/4} (1-(2\cos(2\alpha)\tau -\tau^{2} )\;)^{-1/4} 
		\quad  \text{for}\quad  \tau \in (0,+\infty)$. Consequently, the generalised binomial
		series yields some $\delta_0 > 0$ such
		that for all $0<\tau < \delta_0$ the following series expansion converges absolutely
		\begin{equation*}
			\mathfrak{K}_{\alpha}(\tau)= \tau^{1/4}\sum_{n=0}^{+\infty}\frac{1}{n!}
			\frac{\Gamma(n+1/4)}{\Gamma(1/4)} \big(2\cos(2\alpha)\tau-\tau^{2}\big)^{n}.
		\end{equation*}  
		Expanding $(2\cos(2\alpha)\tau-\tau^{2})^{n}$ yields a power series expansion with
		coefficients $\kappa_{\alpha,n}\in\RR, ~n \in \mathbb{N}$ that, for clarity
		can be written as
		\begin{equation}\label{SeriesExpansion}
			\mathfrak{K}_{\alpha}(\tau) = \tau^{1/4}+\sum_{n=1}^{+\infty}\kappa_{\alpha,n}\tau^{n+1/4}.
		\end{equation}
		The above series also converges absolutely for all $0<\tau < \delta_0$. Moreover, since $\mathfrak{K}_{\alpha}(\tau) = \mathfrak{K}_{\alpha}(1/\tau)$, the series expansion \eqref{SeriesExpansion} also holds with $\tau$ replaced by $1/\tau$ for all $\tau>1/\delta_0$. We see in particular
		that $\mathfrak{K}_{\alpha}(\tau)\sim \tau^{+1/4}$ for $\tau\to 0 $, and
		$\mathfrak{K}_{\alpha}(\tau)\sim \tau^{-1/4}$ for $\tau\to +\infty$. From this we conclude
		that $\mathfrak{K}_{\alpha}\in {\calL}^2_{1/4-\epsilon}(0, 1)\oplus {\calL}^2_{-1/4+\epsilon}(1, \infty)$
		for all $\epsilon>0$. According to Remark  \ref{rem:3}, $\widehat{\mathfrak{K}}_{\alpha}(\lambda)$ is
		well defined and analytic in the strip $\vert \Re e\{\lambda\}\vert <1/4$.
		
		Finally we observe that the first term in the series expansion appearing in Equation \eqref{SeriesExpansion} does not depend on $\alpha$, which shows that $\mathfrak{K}_{\pi/2}(\tau) - \mathfrak{K}_{\alpha}(\tau)\sim \tau^{5/4}$
		for $\tau\to 0$ and, once again using the fact that $\mathfrak{K}_{\pi/2}(\tau) - \mathfrak{K}_{\alpha}(\tau) =
		\mathfrak{K}_{\pi/2}(1/\tau) - \mathfrak{K}_{\alpha}(1/\tau)$, we deduce that  $\mathfrak{K}_{\pi/2}(\tau)
		- \mathfrak{K}_{\alpha}(\tau)\sim \tau^{-5/4}$ for $\tau\to +\infty$. Following the same arguments as
		above, we conclude that $\mathfrak{K}_{\pi/2}-\mathfrak{K}_{\alpha}$ is well defined and analytic in the
		strip  $\vert \Re e\{\lambda\}\vert <5/4$.
	\end{proof}
	
	\noindent We now demonstrate that the Mellin transform of the integral kernel
	${\mathfrak{K}}_{\alpha}$ can, in fact, be extended analytically to the
	entire complex plane $\CC$, except at a countable number of points. In order
	to prove this result, we first require a preparatory lemma. \vspace{5mm}
	
	\begin{lemma}\label{Chap:8:Xavier}
		Let $\phi\in\mathscr{C}^{\infty}(\RR_+)$ be a cut-off function satisfying
		$\phi(\tau) = 0$ for $\tau >1/2$ and $\phi(\tau) = 1$ for $\tau <1/4$.
		Then the Mellin transform $\widehat{\phi}(\lambda)$ is analytic on the
		entire complex plane~$\CC$ except at $\lambda=0$ where it admits a simple pole.
		Furthermore for all $\mu\in\RR$ and all $p\geq 0$ it holds that
		\begin{equation}\label{FastDecay}
			\lim_{\xi\to \infty}\vert\xi\vert^{p}\widehat{\phi}(\mu\pm\imath \xi) = 0.
		\end{equation}
	\end{lemma}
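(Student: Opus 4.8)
The plan is to split the Mellin integral defining $\widehat{\phi}(\lambda)$ at $\tau = 1$ and exploit the fact that $\phi$ vanishes for $\tau > 1/2$, so that in fact $\widehat{\phi}(\lambda) = \int_0^{1/2}\phi(\tau)\tau^{-\lambda}\,d\tau/\tau$. First I would isolate the singular part: write $\phi = \phi_1 + \phi_2$ where $\phi_1 := \phi$ on $(0,1/4)$ extended by a smooth cut-off and $\phi_2$ is supported away from $0$; more efficiently, write $\widehat{\phi}(\lambda) = \int_0^{1/4}\tau^{-\lambda}\,d\tau/\tau + \int_0^{1/2}\bigl(\phi(\tau)-\mathbf{1}_{(0,1/4)}(\tau)\bigr)\tau^{-\lambda}\,d\tau/\tau$. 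The first integral equals $\int_0^{1/4}\tau^{-\lambda-1}\,d\tau = (1/4)^{-\lambda}/\lambda$ for $\Re e\{\lambda\} < 0$ (or the analogous explicit primitive), which is a meromorphic function on all of $\CC$ with a single simple pole at $\lambda = 0$ and residue $1$. The second integrand is supported in the compact set $[1/4,1/2]\subset(0,\infty)$ (the difference $\phi(\tau)-\mathbf 1_{(0,1/4)}$ is smooth there and vanishes near both endpoints since $\phi\equiv 1$ on $(0,1/4)$ and $\phi\equiv 0$ on $(1/2,\infty)$), so by Morera's theorem — exactly as in the remark following Definition \ref{def:Mellin} — its Mellin transform is entire. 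Adding the two contributions gives analyticity on $\CC\setminus\{0\}$ with a simple pole at $0$.

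Next I would establish the rapid decay \eqref{FastDecay}. The key observation is that $\psi := \phi\circ\exp$, viewed as a function on $\RR$, is supported in $(-\infty, \log(1/2)]$ and, crucially, is smooth and constant (equal to $1$) on $(-\infty,\log(1/4)]$; hence $\psi - \mathbf{1}_{(-\infty,\log(1/4))}$ is a genuine element of $\mathscr{C}^\infty_c(\RR)$. By Equation \eqref{Chap:8:lem:equivalence}, for $\lambda = \mu + \imath\xi$ we have $\widehat{\phi}(\mu+\imath\xi) = \mathscr{F}\bigl(e^{-\mu\,\cdot}\,\psi\bigr)(\xi)$, and splitting off the jump term as above, the compactly-supported smooth piece has a Fourier transform that decays faster than any polynomial by the standard integration-by-parts argument, while the explicit jump term contributes $(1/4)^{-\mu}(1/4)^{-\imath\xi}/(\mu+\imath\xi)$, which for fixed $\mu$ decays like $|\xi|^{-1}$ and hence also satisfies $|\xi|^p\cdot(\,\cdot\,)\to 0$ provided — wait, that only gives decay for $p < 1$. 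To handle all $p\geq 0$ I would instead keep the jump term exact and note that after the split the remaining term is Schwartz in $\xi$; the jump term $(1/4)^{-\lambda}/\lambda$ is $O(|\xi|^{-1})$ which is not enough for large $p$, so the cleaner route is: fix $\mu\in\RR$, and integrate by parts $p+1$ times in the \emph{original} integral $\int_{-\infty}^{\log(1/2)} e^{-(\mu+\imath\xi)t}\psi(t)\,dt$ — legitimate because $\psi$ is smooth on $\RR$ with all derivatives bounded and compactly supported once we subtract the constant $1$ on the left tail, and the constant-$1$ tail integrates explicitly to a term with an exact $1/(\mu+\imath\xi)^{k}$ behaviour after $k$ integrations by parts, which for $k = p+1$ is $O(|\xi|^{-(p+1)})$. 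Either way the conclusion $|\xi|^p\widehat{\phi}(\mu\pm\imath\xi)\to 0$ follows.

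The step I expect to require the most care is the bookkeeping that simultaneously (i) correctly identifies the pole structure — one must be sure the only singularity comes from the $\int_0^{1/4}\tau^{-\lambda-1}d\tau$ term and that the "smooth remainder" piece is genuinely supported in a compact subset of $(0,\infty)$, which relies on $\phi$ being \emph{exactly} $1$ near $0$ and \emph{exactly} $0$ near $\infty$ — and (ii) makes the decay estimate uniform in the right sense (for each fixed $\mu$, over both signs $\pm\imath\xi$, and for every polynomial order $p$). The rest — the explicit primitive of $\tau^{-\lambda-1}$, Morera's theorem, and the integration-by-parts decay of Fourier transforms of $\mathscr{C}^\infty_c$ functions — is entirely routine and can be cited from \cite{MR924157}.
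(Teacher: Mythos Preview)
Your decomposition $\widehat{\phi}(\lambda) = \int_0^{1/4}\tau^{-\lambda-1}\,d\tau + \int_{1/4}^{1/2}\phi(\tau)\tau^{-\lambda-1}\,d\tau$ does establish the pole structure correctly, but your description of the second piece is inaccurate: the function $\phi - \mathbf{1}_{(0,1/4)}$ is \emph{not} smooth and does \emph{not} vanish near the left endpoint of $[1/4,1/2]$, since $\phi(1/4)=1$ by continuity. It has a unit jump at $\tau=1/4$. This is harmless for the entire-ness claim (bounded compactly supported integrands still give entire Mellin transforms via Morera), but it breaks your first attempt at the decay estimate, as you yourself noticed: the Mellin transform of a function with a jump in $(0,\infty)$ only decays like $|\xi|^{-1}$, exactly matching the $-(1/4)^{-\lambda}/\lambda$ from the first piece. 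Your proposed repair --- that ``the constant-$1$ tail integrates explicitly to a term with an exact $1/(\mu+\imath\xi)^{k}$ behaviour after $k$ integrations by parts'' --- is not correct: the constant tail $\int_{-\infty}^{\log(1/4)} e^{-\lambda t}\,dt$ evaluates once and for all to $-(1/4)^{-\lambda}/\lambda$, and no further integration by parts extracts additional powers of $1/\lambda$ from a constant.

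The missing observation is that the two $O(|\xi|^{-1})$ contributions \emph{cancel exactly}: one integration by parts on the second piece $\int_{1/4}^{1/2}\phi(\tau)\tau^{-\lambda-1}\,d\tau$ throws off the boundary term $+(1/4)^{-\lambda}/\lambda$, annihilating the first piece and leaving $\lambda^{-1}\int_{1/4}^{1/2}\phi'(\tau)\tau^{-\lambda}\,d\tau$. This is precisely the identity the paper derives directly: integrating the defining Mellin integral by parts once (first for $\Re e\{\lambda\}<0$, then extended by analytic continuation) gives $\widehat{\phi}(\lambda)=\widehat{\Upsilon}(\lambda)/\lambda$ with $\Upsilon(r):=r\,\partial_r\phi(r)\in\mathscr{C}^\infty_0(\RR_+)$. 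Since $\Upsilon\circ\exp\in\mathscr{C}^\infty_c(\RR)$, the rapid decay of $\widehat{\Upsilon}$ on every vertical line (hence of $\widehat{\phi}=\widehat{\Upsilon}/\lambda$) follows from the Riemann--Lebesgue lemma applied to derivatives. The paper's route thus bypasses your split entirely and avoids the cancellation bookkeeping; your route converges to the same identity once you track the boundary term correctly.
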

	\begin{proof}
		The definition of $\phi$ implies that $\phi \in \calL^2_{-\epsilon}(0, 1)$ for every $\epsilon >0$ and Remark
		\ref{rem:3} therefore implies that $\widehat{\phi}(\lambda)$ is well-defined and analytic for $\Re e\{\lambda\} < 0$.
		Furthermore, for any such $\lambda \in \mathbb{C}$ we have, using integration by parts, that
		\begin{equation}\label{SimplePole}
			\begin{aligned}
				\widehat{\phi}(\lambda)
				& = \int_{0}^{\infty}\phi(r)r^{-\lambda} dr/r = \frac{1}{\lambda}\int_{0}^{\infty} \Upsilon(r) r^{-\lambda} dr /r\\
				& = \widehat{\Upsilon}(\lambda)/\lambda \quad \text{where}\quad\Upsilon(r):=r\partial_{r}\phi(r).
			\end{aligned}
		\end{equation}
		Since $\supp(\partial_{r}\phi)\subset \lbr\frac{1}{4},\frac{1}{2}\rbr$ by assumption we have that
		$\Upsilon\in\mathscr{C}^{\infty}_{0}(\RR_{+})$. This implies in particular (see Section \ref{Chap8:sec:4a}) that the
		Mellin transform $\widehat{\Upsilon}(\lambda)$ is analytic in the entire complex plane $\CC$ and therefore
		$\widehat{\phi}(\lambda)$ is analytic on the entire complex plane $\CC$ except at $\lambda=0$ where it has a simple pole.
		
		Next we demonstrate the validity of the decay condition \eqref{FastDecay}. To this end, let $g \colon \RR \rightarrow \RR$ be
		defined as $g(t):=\Upsilon(\exp(t)) ~\forall t \in \RR$, and let $\lambda = \xi + \imath \mu \in \CC$. Obviously,
		$g \in \mathscr{C}_0^{\infty}(\mathbb{R})$ and thus any order derivative of $g$ is an integrable function on $\RR$. The Riemann-Lebesgue lemma (see, e.g., \cite[Chapter 5]{zbMATH01022519}) therefore implies that for any fixed $\mu \in \RR$
		and all $p\geq 0$ it holds that
		\begin{align*}
			\lim_{\xi\to \pm\infty}\vert\xi\vert^{p}(\mathscr{F}g) (\xi + \imath \mu) =0,
		\end{align*}
		where $\mathscr{F}g$ denotes the Fourier transform of $g$. The decay condition~\eqref{FastDecay} now follows using the
		correspondence between the Fourier transform and the Mellin transform given by Equation~\eqref{Chap:8:lem:equivalence}.
	\end{proof}

	\vspace{5mm}
	
	\begin{proposition}\label{prop:2}
		For all $\alpha\in (0,\pi)$, the Mellin symbol
		$\widehat{\mathfrak{K}}_{\alpha}$ can be extended as an analytic function defined on $\CC\setminus\mathfrak{S}$ where
		$\mathfrak{S} := (+1/4+\mathbb{N})\cup(-1/4-\mathbb{N})$. Moreover $\widehat{\mathfrak{K}}_{\alpha}(\lambda)$ admits a simple pole
		at each point of $\mathfrak{S}$, and its residue at $\lambda = \pm 1/4$ does not depend on $\alpha$. 
	\end{proposition}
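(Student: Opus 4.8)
The plan is to start from Proposition~\ref{prop:1}, which already provides analyticity of $\widehat{\mathfrak{K}}_\alpha$ on the strip $\vert\Re e\{\lambda\}\vert<1/4$, and to enlarge the domain of analyticity by subtracting from $\mathfrak{K}_\alpha$, near $\tau=0$, the first few terms of the series expansion~\eqref{SeriesExpansion} (and, symmetrically, near $\tau=+\infty$), whose Mellin transforms are computable in closed form from Lemma~\ref{Chap:8:Xavier}; letting the number of subtracted terms grow then yields the full meromorphic continuation together with the location and nature of the singularities. Concretely, let $\phi$ be the cut-off of Lemma~\ref{Chap:8:Xavier} and set $\psi(\tau):=\phi(1/\tau)$, so that $1-\phi-\psi$ is a smooth function supported in a compact subset of $(0,+\infty)$ (a short check shows $1-\phi-\psi$ vanishes outside $[1/4,4]$). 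We decompose
\[
\mathfrak{K}_\alpha=\phi\,\mathfrak{K}_\alpha+(1-\phi-\psi)\,\mathfrak{K}_\alpha+\psi\,\mathfrak{K}_\alpha .
\]
Since $\mathfrak{K}_\alpha\in\mathscr{C}^\infty(\RR_+)$, the middle term belongs to $\mathscr{C}^\infty_0(\RR_+)$ and hence has an entire Mellin transform (see Section~\ref{Chap8:sec:4a}); the last term is reduced to the first by the substitution $\tau\mapsto1/\tau$ in the Mellin integral combined with the identities $\mathfrak{K}_\alpha(\tau)=\mathfrak{K}_\alpha(1/\tau)$ and $\psi(1/\tau)=\phi(\tau)$, which give $\widehat{\psi\mathfrak{K}_\alpha}(\lambda)=\widehat{\phi\mathfrak{K}_\alpha}(-\lambda)$. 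It therefore suffices to analyse $\widehat{\phi\mathfrak{K}_\alpha}$.

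Fix $N\in\mathbb{N}$. Using~\eqref{SeriesExpansion} (recall $\kappa_{\alpha,0}=1$) we write
\[
\phi\,\mathfrak{K}_\alpha=\sum_{n=0}^{N}\kappa_{\alpha,n}\,\phi(\tau)\,\tau^{n+1/4}+\phi(\tau)\,R_{N}(\tau),\qquad R_{N}(\tau):=\mathfrak{K}_\alpha(\tau)-\sum_{n=0}^{N}\kappa_{\alpha,n}\,\tau^{n+1/4}.
\]
A direct computation shows that the Mellin transform of $\tau\mapsto\phi(\tau)\tau^{n+1/4}$ is $\lambda\mapsto\widehat{\phi}(\lambda-n-1/4)$, which by Lemma~\ref{Chap:8:Xavier} is analytic on $\CC$ with at most a simple pole, located at $\lambda=n+1/4$. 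On the other hand $\phi R_{N}$ is smooth on $(0,+\infty)$, supported in $(0,1/2)$, and by the absolute convergence of~\eqref{SeriesExpansion} satisfies $R_{N}(\tau)=O(\tau^{N+5/4})$ as $\tau\to0^{+}$; hence $\phi R_{N}\in\calL^{2}_{\beta}(0,1)$ for every $\beta<N+5/4$, so by Lemma~\ref{def:Hardy} its Mellin transform is analytic on $\{\Re e\{\lambda\}<N+5/4\}$. Consequently $\widehat{\phi\mathfrak{K}_\alpha}$ extends analytically to $\{\Re e\{\lambda\}<N+5/4\}$ except for at most simple poles at $1/4,5/4,\dots,N+1/4$; since $N$ is arbitrary, it extends to $\CC\setminus(1/4+\mathbb{N})$ with at most simple poles at the points of $1/4+\mathbb{N}$.

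Combining the three pieces, and noting that $1/4+\mathbb{N}$ and $-1/4-\mathbb{N}$ are disjoint while the middle piece is entire, we conclude that $\widehat{\mathfrak{K}}_\alpha$ extends analytically to $\CC\setminus\mathfrak{S}$ with at most simple poles at the points of $\mathfrak{S}$; this extension agrees with $\widehat{\mathfrak{K}}_\alpha$ on $\vert\Re e\{\lambda\}\vert<1/4$ since there all three Mellin transforms reduce to the defining integrals. It remains to compute the residues at $\lambda=\pm1/4$. At $\lambda=1/4$ only the $n=0$ contribution above is singular, so the residue equals $\kappa_{\alpha,0}\cdot\mathrm{Res}_{\mu=0}\widehat{\phi}(\mu)$; by~\eqref{SimplePole} we have $\mathrm{Res}_{\mu=0}\widehat{\phi}(\mu)=\widehat{\Upsilon}(0)$ with $\Upsilon(r)=r\partial_{r}\phi(r)$, and since $\kappa_{\alpha,0}=1$ is independent of $\alpha$ — the first term of~\eqref{SeriesExpansion} being $\alpha$-independent, as already noted in the proof of Proposition~\ref{prop:1} — the residue at $\lambda=1/4$ does not depend on $\alpha$. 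The identity $\widehat{\psi\mathfrak{K}_\alpha}(\lambda)=\widehat{\phi\mathfrak{K}_\alpha}(-\lambda)$ then gives the corresponding statement at $\lambda=-1/4$.

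The only genuinely delicate point — more a matter of care than an obstacle — is keeping track of the leading coefficient $\kappa_{\alpha,0}=1$ of~\eqref{SeriesExpansion} throughout the residue computation, since this is precisely what forces the residues at $\pm1/4$ to be independent of $\alpha$; the rest is a routine application of the Mellin/Hardy-space correspondence recalled in Section~\ref{Chap8:sec:4}.
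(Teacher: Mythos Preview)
Your proof is correct and follows essentially the same strategy as the paper's: subtract off finitely many terms of the expansion~\eqref{SeriesExpansion} localised by the cut-off $\phi$ near $\tau=0$ (and, by the reflection $\tau\mapsto1/\tau$, near $\tau=+\infty$), compute their Mellin transforms explicitly via Lemma~\ref{Chap:8:Xavier}, and show that the remainder has Mellin transform analytic on an arbitrarily wide strip. The only cosmetic difference is that you carry an explicit compactly supported middle piece $(1-\phi-\psi)\mathfrak{K}_\alpha\in\mathscr{C}^\infty_0(\RR_+)$, whereas the paper absorbs it into its single remainder $\mathrm{R}_Q=\mathfrak{K}_\alpha-\Phi_Q-\Phi_Q(1/\cdot)$.
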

	\begin{proof}
		Let $\phi\in\mathscr{C}^{\infty}(\RR_+)$ be a cut-off function satisfying $\phi(\tau) = 0$ for $\tau >1/2$,
		and $\phi(\tau) = 1$ for $\tau <1/4$ as described in Lemma \ref{Chap:8:Xavier}, and let $Q\geq 2$ be a natural number. We define
		for all~$\tau >0$
		\begin{equation}\label{PoleDecomp}
			\begin{array}{l}
				\Phi_{Q}(\tau):= \left(\;\tau^{1/4}+\sum_{q=1}^{Q-1}\kappa_{\alpha,q}\tau^{q+1/4}\;\right)\phi(\tau),\\[5pt]
				\mathrm{R}_{Q}(\tau):=\mathfrak{K}_{\alpha}(\tau) - \Phi_{Q}(\tau) - \Phi_{Q}(1/\tau),
			\end{array}
		\end{equation}
		where the coefficients $\kappa_{\alpha,q}$ appearing in the sum are the same as those appearing
		in the series expansion of $\mathfrak{K}_{\alpha}$ given by \eqref{SeriesExpansion}.
		
		\noindent It is a simple exercise to prove that for any function $v \in \mathscr{C}_0^{\infty}(\RR_{+})$, if we define
		$v_{\sharp}(\tau):=v(1/\tau)$ then we have $\hat{v}_{\sharp}(\lambda) = \hat{v}(-\lambda)$. Consequently, from Equation
		\eqref{PoleDecomp} we obtain that
		\begin{align}\label{eq:kernel2}
			\widehat{\mathfrak{K}}_{\alpha}(\lambda) = \widehat{\mathrm{R}}_{Q}(\lambda) + \widehat{\Phi}_{Q}(\lambda) +\widehat{\Phi}_{Q}(-\lambda).
		\end{align}
		
		\noindent
		Next, we analyse the behaviour of the Mellin transforms of the two functions $\Phi_{Q}$ and $\mathrm{R}_{Q}$. To this end, we define the set $\mathbb{B}_{Q}:=
		\{\lambda \in\CC \colon \;\vert \Re e\{\lambda\}\vert <Q+1/4  \}$. A direct calculation yields
		\begin{align}\label{eq:cor1}
			\widehat{\Phi}_{Q}(\lambda) =
			\widehat{\phi}(\lambda-1/4) + \sum_{q=1}^{Q-1}\kappa_{\alpha,q}\widehat{\phi}(\lambda - q-1/4).
		\end{align}
		Using Lemma \ref{Chap:8:Xavier}, we deduce that the Mellin transform $\widehat{\Phi}_{Q}(\lambda)$ is well-defined and analytic
		for all $\lambda \in \CC \setminus \mathfrak{S}$ where $ \mathfrak{S}= (+1/4+\mathbb{N})\cup(-1/4-\mathbb{N})$. Moreover, $\widehat{\Phi}_{Q}$ admits a simple pole at each point of
		$\mathfrak{S} \cap \mathbb{B}_{Q}$ and its residue at $\lambda = 1/4$ does not depend on $\alpha$.
		
		Furthermore, using the same arguments involving series expansions in neighbourhoods of $\tau=0$ and $\tau \to \infty$ that
		we used in the proof of Proposition \ref{prop:1}, we obtain that
		\begin{equation*}
			\mathrm{R}_{Q} \in \calL^2_{Q + 1/4-\epsilon}(0, 1)\oplus\calL^2_{-Q - 1/4+\epsilon}(1, \infty)\qquad\forall \epsilon > 0.
		\end{equation*}
		As a consequence the Mellin transform $\widehat{\mathrm{R}}_{Q}(\lambda)$ is well
		defined and analytic for all~$\lambda \in \mathbb{B}_{Q}$.
		In view of Equation \eqref{eq:kernel2}, we therefore conclude that the Mellin transform $\widehat{\mathfrak{K}}_{\alpha}(\lambda)$
		is analytic for all $\lambda \in \mathbb{B}_Q\setminus \mathfrak{S}$, admits a simple pole for any
		$\lambda \in\mathfrak{S} \cap \mathbb{B}_{Q}$ and has residue at $\lambda = \pm 1/4$ that does not depend on $\alpha$.
		Since $Q$ can be chosen arbitrarily large, the proof now follows.
	\end{proof}
	
	\noindent Proposition \ref{prop:2} has a straightforward corollary concerning the decay properties of the Mellin transform of the integral kernel $\mathfrak{K}_{\alpha}, \alpha \in ( 0, \pi)$ on vertical lines in the complex plane. \vspace{5mm}
	
	\begin{corollary}\label{cor:1}
		For all $\alpha\in (0,\pi)$, the Mellin symbol
		$\widehat{\mathfrak{K}}_{\alpha}$ satisfies 
		\begin{align*}
			\lim_{\xi\to +\infty}\vert\widehat{\mathfrak{K}}_{\alpha}(\mu\pm\imath \xi)\vert^{2}\vert\xi\vert^{2p} = 0\;
			~~\forall \mu\in\RR, \;\forall p\geq 0.
		\end{align*}
	\end{corollary}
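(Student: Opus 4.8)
The plan is to reuse the decomposition of $\widehat{\mathfrak{K}}_\alpha$ obtained in the proof of Proposition \ref{prop:2} and to check that every piece of it decays faster than any power of $\xi$ on every vertical line. Since $|\xi|^{2p}|\widehat{\mathfrak{K}}_\alpha(\mu\pm\imath\xi)|^2=\big(|\xi|^{p}|\widehat{\mathfrak{K}}_\alpha(\mu\pm\imath\xi)|\big)^2$, it suffices to prove that $|\xi|^{p}\,\widehat{\mathfrak{K}}_\alpha(\mu\pm\imath\xi)\to 0$ as $\xi\to+\infty$ for each fixed $\mu\in\RR$ and each $p\geq 0$. Given such $\mu$ and $p$, I would fix a natural number $Q\geq 2$ with $Q+1/4>|\mu|$ and invoke Equation \eqref{eq:kernel2}, namely $\widehat{\mathfrak{K}}_\alpha(\lambda)=\widehat{\mathrm{R}}_Q(\lambda)+\widehat{\Phi}_Q(\lambda)+\widehat{\Phi}_Q(-\lambda)$, with $\Phi_Q$ and $\mathrm{R}_Q$ the functions defined in \eqref{PoleDecomp}.

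First I would dispose of the two $\Phi_Q$ terms. By \eqref{eq:cor1}, $\widehat{\Phi}_Q$ is a finite linear combination of shifts $\widehat{\phi}(\,\cdot - q - 1/4\,)$, $0\leq q\leq Q-1$, of the Mellin transform of the cut-off $\phi$ of Lemma \ref{Chap:8:Xavier}. Writing $\lambda=\mu\pm\imath\xi$, each summand contributing to $\widehat{\Phi}_Q(\lambda)$ has the form $\widehat{\phi}\big((\mu-q-1/4)\pm\imath\xi\big)$ and each summand contributing to $\widehat{\Phi}_Q(-\lambda)$ has the form $\widehat{\phi}\big((-\mu-q-1/4)\mp\imath\xi\big)$; the decay estimate \eqref{FastDecay}, applied on the vertical lines of real part $\mu-q-1/4$, resp. $-\mu-q-1/4$ (the unique pole of $\widehat{\phi}$ lying on the real axis and hence being harmless as $\xi\to+\infty$), then gives $\lim_{\xi\to+\infty}|\xi|^{p}\,\widehat{\Phi}_Q(\mu\pm\imath\xi)=\lim_{\xi\to+\infty}|\xi|^{p}\,\widehat{\Phi}_Q(-\mu\mp\imath\xi)=0$.

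The one step requiring genuine work is the remainder $\widehat{\mathrm{R}}_Q$. As already observed in the proof of Proposition \ref{prop:2}, $\mathrm{R}_Q\in\mathscr{C}^{\infty}(\RR_+)$ (because $\mathfrak{K}_\alpha$ is smooth on $\RR_+$, with $\sin^2\alpha>0$ guaranteeing positivity under the root, and $\Phi_Q$ is smooth), $\mathrm{R}_Q(1/\tau)=\mathrm{R}_Q(\tau)$, and near the origin $\mathrm{R}_Q(\tau)=\sum_{n\geq Q}\kappa_{\alpha,n}\tau^{n+1/4}$, so that $\mathrm{R}_Q(\tau)=O(\tau^{Q+1/4})$ as $\tau\to 0_+$ and $\mathrm{R}_Q(\tau)=O(\tau^{-Q-1/4})$ as $\tau\to+\infty$; since the Euler operator $\tau\partial_\tau$ multiplies $\tau^{\pm(n+1/4)}$ by $\pm(n+1/4)$, the functions $(\tau\partial_\tau)^k\mathrm{R}_Q$ obey the same two bounds for every $k\geq 0$, being smooth and bounded in between. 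I would then set $g(t):=\mathrm{R}_Q(e^{t})e^{-\mu t}$ for $t\in\RR$: each derivative $g^{(k)}$ is a finite combination of terms $\big((\tau\partial_\tau)^{j}\mathrm{R}_Q\big)(e^{t})\,e^{-\mu t}$ with $j\leq k$, and the bounds above make each such term $O(e^{(Q+1/4-\mu)t})$ as $t\to-\infty$ and $O(e^{-(Q+1/4+\mu)t})$ as $t\to+\infty$; as $Q+1/4>|\mu|$, this yields $g^{(k)}\in\mL^{1}(\RR)$ for all $k\geq 0$. The Riemann--Lebesgue lemma (exactly as in the proof of Lemma \ref{Chap:8:Xavier}) then gives $|\xi|^{p}(\mathscr{F}g)(\xi)\to 0$ as $|\xi|\to\infty$ for every $p\geq 0$, and the Euler change of variables $\tau=e^{t}$ identifies $\widehat{\mathrm{R}}_Q(\mu\pm\imath\xi)=\int_0^{\infty}\mathrm{R}_Q(\tau)\tau^{-\mu\mp\imath\xi}\,d\tau/\tau=(\mathscr{F}g)(\pm\xi)$, whence $\lim_{\xi\to+\infty}|\xi|^{p}\,\widehat{\mathrm{R}}_Q(\mu\pm\imath\xi)=0$.

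Adding the three limits and using $\widehat{\mathfrak{K}}_\alpha=\widehat{\mathrm{R}}_Q+\widehat{\Phi}_Q(\,\cdot\,)+\widehat{\Phi}_Q(-\,\cdot\,)$ completes the proof. The main --- indeed essentially the only --- obstacle is the verification that $\mathrm{R}_Q$, transported through $\exp$, behaves like a Schwartz function, i.e. that every $\tau\partial_\tau$-derivative of $\mathrm{R}_Q$ still vanishes like $\tau^{\pm(Q+1/4)}$ at the two ends, which is precisely what makes Riemann--Lebesgue applicable; everything else is bookkeeping built on Proposition \ref{prop:2} and Lemma \ref{Chap:8:Xavier}.
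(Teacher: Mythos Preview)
Your proof is correct and follows essentially the same route as the paper: decompose $\widehat{\mathfrak{K}}_\alpha=\widehat{\mathrm{R}}_Q+\widehat{\Phi}_Q(\cdot)+\widehat{\Phi}_Q(-\cdot)$ as in Proposition~\ref{prop:2}, dispatch the $\widehat{\Phi}_Q$ terms via Lemma~\ref{Chap:8:Xavier}, and treat $\widehat{\mathrm{R}}_Q$ by Riemann--Lebesgue after the Euler change of variables. The only notable difference is in the bookkeeping for $\mathrm{R}_Q$: the paper works with ordinary derivatives $\partial_\tau^{p}\mathrm{R}_Q$ and chooses $Q\geq p$, whereas you (more naturally for the Mellin setting) track $(\tau\partial_\tau)^{j}\mathrm{R}_Q$, include the weight $e^{-\mu t}$ explicitly, and choose $Q$ with $Q+1/4>|\mu|$; your version makes the link to the Fourier decay on the line $\Re e\{\lambda\}=\mu$ slightly more transparent.
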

	\begin{proof}
		Let $p \geq 0$ be a non-negative integer and consider the proof of Proposition \ref{prop:2}. Due to the decomposition \eqref{eq:kernel2}, it suffices to show that there exists some $Q \geq 2$ such that
		\begin{subequations}
			\begin{alignat}{2} \label{eq:cor3}
				\lim_{\xi\to +\infty}\vert\widehat{\Phi_{Q}}(\mu\pm\imath \xi)\vert^{2}\vert\xi\vert^{2p}&=0\;
				~~\forall \mu\in\RR, \;\forall p\geq 0\quad \text{and}\\ \label{eq:cor2}
				\lim_{\xi\to +\infty}\vert\widehat{\mathrm{R}_{Q}}(\mu\pm\imath \xi)\vert^{2}\vert\xi\vert^{2p} &=0\;
				~~\forall \mu\in\RR, \;\forall p\geq 0.
			\end{alignat}
		\end{subequations}
		
		\noindent
		The decay condition \eqref{eq:cor3} can be deduced for all $Q\geq 2$ using the decay condition \eqref{FastDecay} established in Lemma \ref{Chap:8:Xavier} together with the expression \eqref{eq:cor1} for the Mellin transform $\widehat{\Phi_{Q}}$. \\
		
		\noindent In order to establish the decay condition \eqref{eq:cor2}, we recall the earlier argument presented in the proof of Lemma \ref{Chap:8:Xavier} involving the Riemann-Lebesgue lemma to prove the decay condition~\eqref{FastDecay}. In view of this argument, it is sufficient to establish that there exists some $Q \geq 2$ such that the function $\mathrm{R}_{Q} \in \mathscr{C}^{\infty}\big((0, \infty)\big)$ and the $p^{\text{th}}$ derivative of $\mathrm{R}_{Q}$ is an integrable function on~$\RR_{+}$.
		
		Let $Q \geq 2$. Notice that for all $\alpha \in (0, \pi)$ the kernel $\mathfrak{K}_{\alpha}$
		is by definition in $ \mathscr{C}^{\infty}(\RR_+)$ (see, e.g., Equation \eqref{eq:lem:basic2}).
		Moreover, since the cutoff function $\phi \in \mathscr{C}^{\infty}(\RR_+)$, the relation \eqref{PoleDecomp} implies that $\Phi_{Q} \in \mathscr{C}^{\infty}\big((0, \infty)\big)$ from which we can deduce that $\mathrm{R}_{Q} \in \mathscr{C}^{\infty}\big((0, \infty)\big)$.

		\noindent 	It therefore remains to prove that for some choice of $Q\geq 2$, the $p^{\text{th}}$ derivative of
		$\mathrm{R}_Q$ is an integrable function on $\RR_{+}$. Thus, it suffices to show that for some choice of $Q\geq 2$ we have
		\[\lim_{\tau \to 0} \partial_\tau^p\mathrm{R}_Q(\tau) =
		\lim_{\tau \to \infty}\partial_\tau^p\mathrm{R}_Q(\tau)=0.\]
		
		\noindent 
		We first consider the limit $\tau \to 0$. Using the relation \eqref{PoleDecomp} and the definition of the cutoff-function
		$\phi $ we see that for $0<\tau<\delta_0$ (i.e. $\tau$  sufficiently small) we have 
		\begin{align*}
			\mathrm{R}_{Q}(\tau) &= \mathfrak{K}_{\alpha}(\tau) - \Big(\;\tau^{1/4}+\sum_{q=1}^{Q-1}\kappa_{\alpha,q}\tau^{q+1/4}\;\Big)\phi(\tau)=  \sum_{q=Q}^{\infty}\kappa_{\alpha,q}\tau^{q+1/4}.
		\end{align*}
		Consequently, if we pick $Q \geq p$, we deduce that $\lim_{\tau \to 0} \partial_\tau^p\mathrm{R}_Q(\tau) =0$
		as required. In a similar fashion, we see that for $\tau>1/\delta_0$ (i.e. $\tau$  sufficiently large) we have
		\begin{align*}
			\mathrm{R}_{Q}(\tau) = \mathfrak{K}_{\alpha}(\tau) - \Big(\;\tau^{-1/4}+\sum_{q=1}^{Q-1}\kappa_{\alpha,q}\tau^{-q-1/4}\;\Big)\phi(\tau)= \sum_{q=Q}^{\infty}\kappa_{\alpha,q}\tau^{-q-1/4},
		\end{align*}
		where the second equality follows from the asymptotic expansion of the kernel $\mathfrak{K}_{\alpha}$ obtained in the proof of Proposition \ref{prop:1}. It therefore follows that $\lim_{\tau \to \infty} \partial_\tau^p\mathrm{R}_Q(\tau) = 0$. This completes the proof.
	\end{proof}

	We conclude this section by stating two corollaries that follow from the results stated above. These corollaries will be used to conclude the analysis of the Riesz potential on corners of the polyhedral domain $\Omega$ that we began in Section \ref{Chap:8:sec:3}. \vspace{5mm}

	\begin{corollary}\label{ContinuityEstimate1}
		For all $\alpha\in (0,\pi)$, the Mellin symbol
		$\widehat{\mathfrak{K}}_{\alpha}$ satisfies 
		\begin{align*}
			\dsp{\sup_{\xi\in\RR}} \frac{\vert\widehat{\mathfrak{K}}_{\alpha}(\mu+\imath\xi)\vert^{2}\vert \xi\vert^{2}}{1+\vert\xi\vert}  <\infty \qquad 
			\forall\mu\in\RR. 
		\end{align*}
	\end{corollary}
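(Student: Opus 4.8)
The plan is to read the statement off directly from Propositions \ref{prop:1}--\ref{prop:2} and, above all, from Corollary \ref{cor:1}. Fix $\alpha\in(0,\pi)$ and $\mu\in\RR$. Since $1+\vert\xi\vert\geq 1$, we have
\[
\frac{\vert\widehat{\mathfrak{K}}_{\alpha}(\mu+\imath\xi)\vert^{2}\,\vert\xi\vert^{2}}{1+\vert\xi\vert}\;\leq\;\vert\xi\vert^{2}\,\vert\widehat{\mathfrak{K}}_{\alpha}(\mu+\imath\xi)\vert^{2}\qquad\forall\,\xi\in\RR,
\]
so it is enough to show that $\xi\mapsto\vert\xi\vert^{2}\vert\widehat{\mathfrak{K}}_{\alpha}(\mu+\imath\xi)\vert^{2}$ is bounded on $\RR$. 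The whole point is that the prefactor $\vert\xi\vert^{2}$ is precisely what is needed to absorb the behaviour of $\widehat{\mathfrak{K}}_{\alpha}$ near a pole sitting on the line $\{\Re e\{\lambda\}=\mu\}$, while Corollary \ref{cor:1} takes care of the regime $\vert\xi\vert\to+\infty$.

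First I would introduce $g(\xi):=\imath\xi\,\widehat{\mathfrak{K}}_{\alpha}(\mu+\imath\xi)=(\lambda-\mu)\widehat{\mathfrak{K}}_{\alpha}(\lambda)\big|_{\lambda=\mu+\imath\xi}$ and check that $g$ is continuous on all of $\RR$. The map $\lambda\mapsto(\lambda-\mu)\widehat{\mathfrak{K}}_{\alpha}(\lambda)$ is analytic at every point of the line $\{\Re e\{\lambda\}=\mu\}$: for points with non-zero imaginary part this is immediate, because such points do not belong to the real exceptional set $\mathfrak{S}$, on which alone $\widehat{\mathfrak{K}}_{\alpha}$ fails to be analytic (Proposition \ref{prop:2}); at $\lambda=\mu$, either $\mu\notin\mathfrak{S}$, in which case $\widehat{\mathfrak{K}}_{\alpha}$ is already analytic there, or $\mu\in\mathfrak{S}$, in which case the pole is \emph{simple} by Proposition \ref{prop:2} and is therefore removed upon multiplication by $(\lambda-\mu)$. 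Hence $g$ is continuous on $\RR$, and so is $\vert g(\xi)\vert^{2}=\vert\xi\vert^{2}\,\vert\widehat{\mathfrak{K}}_{\alpha}(\mu+\imath\xi)\vert^{2}$.

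Finally I would combine a local and a global bound. On any compact interval $[-R,R]$ the continuous function $\vert g\vert^{2}$ is bounded. For $\vert\xi\vert\geq R$, Corollary \ref{cor:1} applied with $p=1$ (to both $\mu+\imath\xi$ and $\mu-\imath\xi$) gives $\lim_{\vert\xi\vert\to+\infty}\vert\xi\vert^{2}\vert\widehat{\mathfrak{K}}_{\alpha}(\mu+\imath\xi)\vert^{2}=0$, so $\vert g\vert^{2}$ is bounded on $\{\vert\xi\vert\geq R\}$ as well. Thus $\sup_{\xi\in\RR}\vert g(\xi)\vert^{2}<+\infty$, and the displayed inequality above yields the claim for this $\mu$; since $\mu\in\RR$ was arbitrary, this completes the argument. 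I do not expect any genuine obstacle: the proof is essentially bookkeeping, the only step that calls for a moment's care being the observation that the simple pole of $\widehat{\mathfrak{K}}_{\alpha}$ — present precisely when $\mu\in\mathfrak{S}$ — is cancelled exactly by the factor $\vert\xi\vert^{2}$, so that no weighted space or sharper estimate is needed at this stage.
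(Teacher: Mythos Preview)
Your proof is correct and follows essentially the same approach as the paper: both arguments use Corollary~\ref{cor:1} to handle the regime $\vert\xi\vert\to\infty$, and both invoke Proposition~\ref{prop:2} to observe that multiplying $\widehat{\mathfrak{K}}_{\alpha}(\mu+\imath\xi)$ by $\xi$ removes the possible simple pole at $\xi=0$, yielding a function continuous on $\RR$ and hence bounded on compacta. Your write-up is slightly more explicit about the case distinction $\mu\in\mathfrak{S}$ versus $\mu\notin\mathfrak{S}$, but the substance is identical.
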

	\begin{proof}
		Given any $\lambda \in \CC$, we write $\lambda = \mu + \imath \xi$. In view of Corollary \ref{cor:1}, it suffices to show that for any $\mu \in \RR$ and any bounded set $K \subset \RR$ we have
		\begin{align}\label{eq:cor2_1}
			\dsp{\sup_{\xi\in K}} \vert\widehat{\mathfrak{K}}_{\alpha}(\mu+\imath\xi)\vert^{2}\vert \xi\vert^{2} < \infty.
		\end{align}
		
		\noindent We recall from Proposition \ref{prop:2} that the Mellin transform $\widehat{\mathfrak{K}}_{\alpha}$ can be extended as an analytic function defined on $\CC\setminus\mathfrak{S}$ where $\mathfrak{S} = (+1/4+\mathbb{N})\cup(-1/4-\mathbb{N})$ and furthermore that $\widehat{\mathfrak{K}}_{\alpha}(\lambda)$ admits a simple pole at each point of $\mathfrak{S}$. Consequently, the mapping 
		\begin{align*}
			\widehat{\mathfrak{K}}_{\alpha}^{\rm extend} (\mu + \imath \xi):=\widehat{\mathfrak{K}}_{\alpha}(\mu + \imath \xi)\,\xi,
		\end{align*}
		can be extended as a continuous function for all $\xi, \mu \in \RR$. Estimate \eqref{eq:cor2_1} therefore follows.
	\end{proof} \vspace{5mm}

	\begin{corollary}\label{ContinuityEstimate2}
		For all $\alpha\in (0,\pi)$, the Mellin symbol
		$\widehat{\mathfrak{K}}_{\alpha}$ satisfies 
		\begin{subequations}
			\begin{align}\label{eq:cor_4-1}
				\dsp{\sup_{\lambda\in \pm1/4+\imath\RR} }\vert\widehat{\mathfrak{K}}_{\alpha}(\lambda)-
				\widehat{\mathfrak{K}}_{\pi/2}(\lambda)\vert &<+\infty \quad \text{and}\\ \label{eq:cor_4-2}
				\dsp{\sup_{\lambda\in\imath\RR} }\vert\widehat{\mathfrak{K}}_{\alpha}(\lambda)\vert&<+\infty
			\end{align}
		\end{subequations}
	\end{corollary}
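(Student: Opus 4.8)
The plan is to obtain both bounds from a single principle: a Mellin symbol that is analytic on an open vertical strip is continuous, hence locally bounded, on every vertical line inside that strip, so it only remains to control the behaviour as the imaginary part tends to infinity, which is exactly what Corollary~\ref{cor:1} provides.

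\textbf{Estimate \eqref{eq:cor_4-2}.} Fix $\alpha\in(0,\pi)$. The imaginary axis $\imath\RR$ lies in the open strip $\{|\Re e\{\lambda\}|<1/4\}$, on which $\widehat{\mathfrak{K}}_{\alpha}$ is analytic by Proposition~\ref{prop:1}; therefore $\xi\mapsto\widehat{\mathfrak{K}}_{\alpha}(\imath\xi)$ is continuous on $\RR$ and bounded on each compact interval $[-R,R]$. Outside such an interval, Corollary~\ref{cor:1} (taken with $\mu=0$ and $p=0$) shows $|\widehat{\mathfrak{K}}_{\alpha}(\pm\imath\xi)|\to0$ as $\xi\to+\infty$, so the symbol is bounded there as well. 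The supremum over $\imath\RR$ is thus finite.

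\textbf{Estimate \eqref{eq:cor_4-1}.} Here the point is that $\widehat{\mathfrak{K}}_{\alpha}$ itself is \emph{not} bounded on the lines $\Re e\{\lambda\}=\pm1/4$: by Proposition~\ref{prop:2} it has simple poles precisely at $\lambda=\pm1/4$. However, Proposition~\ref{prop:2} also guarantees that the residues of $\widehat{\mathfrak{K}}_{\alpha}$ and $\widehat{\mathfrak{K}}_{\pi/2}$ at $\lambda=\pm1/4$ agree, so the difference $g:=\widehat{\mathfrak{K}}_{\alpha}-\widehat{\mathfrak{K}}_{\pi/2}$ has removable singularities there; indeed Proposition~\ref{prop:1} asserts that $g$ is analytic on the wider strip $\{|\Re e\{\lambda\}|<5/4\}$, which contains both lines $\pm1/4+\imath\RR$. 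Consequently $\xi\mapsto g(\pm1/4+\imath\xi)$ is continuous and bounded on compact intervals, and applying Corollary~\ref{cor:1} with $\mu=\pm1/4$ and $p=0$ — once with the parameter $\alpha$ and once with the parameter $\pi/2$, both lying in $(0,\pi)$ — gives $|g(\pm1/4+\imath\xi)|\to0$ as $\xi\to+\infty$. Combining the two observations yields $\sup_{\lambda\in\pm1/4+\imath\RR}|g(\lambda)|<+\infty$, which is exactly \eqref{eq:cor_4-1}.

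The only genuinely delicate step — and hence the main obstacle, such as it is — is recognising that \eqref{eq:cor_4-1} cannot be proved for a single symbol and really requires the $\alpha$-independence of the residues at $\pm1/4$ from Proposition~\ref{prop:2} together with the enlarged analyticity strip for the difference from Proposition~\ref{prop:1}. Once these ingredients are in place, both estimates reduce to the elementary fact that a function continuous on $\RR$ and vanishing at $\pm\infty$ is bounded.
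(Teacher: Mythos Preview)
Your proof is correct and follows essentially the same approach as the paper: analyticity on the relevant strip (Proposition~\ref{prop:1}) gives boundedness on compact vertical segments, and Corollary~\ref{cor:1} handles the tails. The only cosmetic difference is that you additionally invoke the residue statement from Proposition~\ref{prop:2} to motivate why the difference is regular at $\pm1/4$, whereas the paper simply cites the second part of Proposition~\ref{prop:1} directly; both routes lead to the same conclusion.
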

	\begin{proof}
		Estimate \eqref{eq:cor_4-2} follows by combining Proposition \ref{prop:1}, which demonstrates that $\widehat{\mathfrak{K}}_{\alpha}(\lambda)$ is analytic for $\lambda\in\imath\RR$, together with Corollary \ref{cor:1} which shows that $\lim_{\xi \to \infty} \vert\widehat{\mathfrak{K}}_{\alpha}(\pm \imath\xi)\vert~=~0$.
		
		\noindent
		In order to establish Estimate \eqref{eq:cor_4-1}, we recall from Proposition \ref{prop:1} that the function $\widehat{\mathfrak{K}}_{\pi/2}(\lambda) -   \widehat{\mathfrak{K}}_{\alpha}(\lambda)$
		is  analytic in the strip $\vert \Re e\{\lambda\}\vert <5/4$. Using once again Corollary \ref{cor:1} to establish the decay behaviour of the Mellin symbols for  $\xi \to \infty$ therefore completes the proof.
	\end{proof}

	\section{Application to corner operators}\label{Chap:8:sec:6}
	The goal of this section is to complete the proof of Theorem \ref{thm:1} using the tools and results developed thus far. In view of the development of Section \ref{Chap:8:sec:3} and in particular Proposition \ref{prop:new} and Estimate \eqref{eq:Flat}, it suffices to prove the following lemma. \vspace{5mm}
	
	\begin{lemma} \label{lem:final}
		For any fixed $\chi \in \mathscr{C}_{\comp}^{\infty}(\RR_+)$ and any $\theta \in (0, \pi)$ we have the continuity estimate
		\begin{equation*}
			\sup_{u\in \mathscr{C}^\infty_{0}(\RR_+)\setminus\{0\}}
			\frac{\Vert (\mathscr{A}_{\pi/2}^+ - \mathscr{A}_{\theta}^+)(\chi u)\Vert_{\tilde{\mH}^{1/2}(\RR_{+})}}{\Vert u\Vert_{\mL^{2}(\RR_+)}}<+\infty.
		\end{equation*}
	\end{lemma}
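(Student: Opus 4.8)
The plan is to move everything to the Mellin side and feed in the symbol estimates of Section~\ref{Chap:8:sec:5}; throughout, $\lesssim$ hides a constant depending only on $\chi$ and $\theta$. Comparing Equations~\eqref{eq:lem:basic2} and~\eqref{DefOperatorDiff}, the $\mathfrak{K}_{0}$-contributions in $\mathscr{A}_{\pi/2}^{+}$ and $\mathscr{A}_{\theta}^{+}$ cancel, so $\mathscr{A}_{\pi/2}^{+}-\mathscr{A}_{\theta}^{+}=\mathscr{K}_{\pi/2}-\mathscr{K}_{\theta}$ with $\mathscr{K}_{\alpha}$ as in~\eqref{DefOperatorDiff} and both angles in $(0,\pi)$. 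Fix $u\in\mathscr{C}_{0}^{\infty}(\RR_{+})$, put $w:=\chi u\in\mathscr{C}_{0}^{\infty}(\RR_{+})$ and $g:=(\mathscr{K}_{\pi/2}-\mathscr{K}_{\theta})(w)$. As established above $\mathscr{K}_{\alpha}(w)\in\calL^{2}_{-\beta}(\RR_{+})$ for every $\beta\in(0,1/2)$, so $g\in\calL^{2}_{\gamma}(\RR_{+})$ for every $\gamma\in(-1/2,0)$, and by Lemma~\ref{lem:op2}, on the strip $-1/2<\Re e\{\lambda\}<0$,
\[\widehat{g}(\lambda)=M(\lambda)\,\widehat{w}(\lambda-\tfrac12),\qquad M(\lambda):=\widehat{\mathfrak{K}}_{\pi/2}(\lambda+\tfrac14)-\widehat{\mathfrak{K}}_{\theta}(\lambda+\tfrac14).\]
On the other hand, combining Equation~\eqref{eq:norms_2} with the Plancherel identity~\eqref{eq:norm_L2} (used with $\beta=\tfrac12$, contributing $\int_{0}^{\infty}|g|^{2}\,dx/x$, and with $\beta=0$, contributing $\Vert g\Vert_{\mL^{2}(\RR_{+})}^{2}$) and with Lemma~\ref{lem:4} (contributing $|g|^{2}_{\mH^{1/2}(\RR_{+})}$) gives
\[\Vert g\Vert^{2}_{\tilde{\mH}^{1/2}(\RR_{+})}\ \lesssim\ \int_{\RR}\Big(1+\frac{|\xi|^{2}}{1+|\xi|}\Big)\,|\widehat{g}(\imath\xi)|^{2}\,d\xi\ +\ \int_{\RR}|\widehat{g}(-\tfrac12+\imath\xi)|^{2}\,d\xi,\]
so it suffices to bound the right-hand side by $\Vert u\Vert_{\mL^{2}(\RR_{+})}^{2}$ up to such a constant.

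The crux is that each of $\widehat{\mathfrak{K}}_{\pi/2}(\lambda+\tfrac14)$ and $\widehat{\mathfrak{K}}_{\theta}(\lambda+\tfrac14)$ has a simple pole at $\lambda=0$ and at $\lambda=-\tfrac12$ — exactly the two lines $\Re e\{\lambda\}=0$ and $\Re e\{\lambda\}=-\tfrac12$ over which we must integrate — but these poles cancel in $M$: by Proposition~\ref{prop:1}, $M$ is analytic on the wide strip $-3/2<\Re e\{\lambda\}<1$. Hence $G(\lambda):=M(\lambda)\widehat{w}(\lambda-\tfrac12)$ is analytic there and coincides with $\widehat{g}$ on $-1/2<\Re e\{\lambda\}<0$. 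Moreover $\sup_{-1/2\le\mu\le0}\Vert G(\mu+\imath\cdot)\Vert_{\mL^{2}(\RR)}<\infty$: indeed $\Vert\widehat{w}(\mu-\tfrac12+\imath\cdot)\Vert_{\mL^{2}(\RR)}^{2}=2\pi\Vert w\Vert^{2}_{\calL^{2}_{\mu-1/2}(\RR_{+})}$ is finite and continuous in $\mu$, while $M$ is bounded on the closed strip $-1/2\le\Re e\{\lambda\}\le0$ (it is analytic on a neighbourhood of it, its $\mL^{2}$-norm on each nearby vertical line $\{\Re e\{\lambda\}=\nu\}$ equals $\sqrt{2\pi}\,\Vert\mathfrak{K}_{\pi/2}-\mathfrak{K}_{\theta}\Vert_{\calL^{2}_{\nu+1/4}(\RR_{+})}$, finite and continuous for $|\nu|<5/4$, and $|M|^{2}$ is subharmonic). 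By Remark~\ref{rem:3}, $G$ is then the Mellin transform of some $h\in\calL^{2}_{0}(\RR_{+})\cap\calL^{2}_{-1/2}(\RR_{+})$; since $\widehat{g}$ and $\widehat{h}$ agree on $-1/2<\Re e\{\lambda\}<0$ and the Mellin transform is injective there (Lemma~\ref{lem:Mellin2}), $g=h$. Thus $g\in\calL^{2}_{0}(\RR_{+})\cap\calL^{2}_{-1/2}(\RR_{+})$ and $\widehat{g}$ equals $G$ on $\{\Re e\{\lambda\}=0\}$ and on $\{\Re e\{\lambda\}=-\tfrac12\}$.

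It remains to estimate the two line integrals, now with $\widehat{g}=G$ on both lines. On $\{\Re e\{\lambda\}=0\}$ we have $\widehat{g}(\imath\xi)=M(\imath\xi)\widehat{w}(-\tfrac12+\imath\xi)$ with $M(\imath\xi)=\widehat{\mathfrak{K}}_{\pi/2}(\tfrac14+\imath\xi)-\widehat{\mathfrak{K}}_{\theta}(\tfrac14+\imath\xi)$, so by Parseval (Lemma~\ref{lem:Parseval})
\begin{align*}
\int_{\RR}|\widehat{g}(\imath\xi)|^{2}\,d\xi&\le\Big(\sup_{\xi}|M(\imath\xi)|^{2}\Big)\,\Vert\widehat{w}(-\tfrac12+\imath\cdot)\Vert_{\mL^{2}(\RR)}^{2},\\
\int_{\RR}\frac{|\xi|^{2}}{1+|\xi|}|\widehat{g}(\imath\xi)|^{2}\,d\xi&\le\Big(\sup_{\xi}\frac{|\xi|^{2}}{1+|\xi|}|M(\imath\xi)|^{2}\Big)\,\Vert\widehat{w}(-\tfrac12+\imath\cdot)\Vert_{\mL^{2}(\RR)}^{2},
\end{align*}
where the first supremum is finite by Corollary~\ref{ContinuityEstimate2} and the second by Corollary~\ref{ContinuityEstimate1} (applied at $\mu=\tfrac14$ to $\widehat{\mathfrak{K}}_{\pi/2}$ and $\widehat{\mathfrak{K}}_{\theta}$), and $\Vert\widehat{w}(-\tfrac12+\imath\cdot)\Vert_{\mL^{2}(\RR)}^{2}=2\pi\Vert w\Vert_{\mL^{2}(\RR_{+})}^{2}\le2\pi\Vert\chi\Vert_{\infty}^{2}\Vert u\Vert_{\mL^{2}(\RR_{+})}^{2}$. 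On $\{\Re e\{\lambda\}=-\tfrac12\}$ we have $\widehat{g}(-\tfrac12+\imath\xi)=M(-\tfrac12+\imath\xi)\widehat{w}(-1+\imath\xi)$ with $M(-\tfrac12+\imath\xi)=\widehat{\mathfrak{K}}_{\pi/2}(-\tfrac14+\imath\xi)-\widehat{\mathfrak{K}}_{\theta}(-\tfrac14+\imath\xi)$ bounded in $\xi$ by Corollary~\ref{ContinuityEstimate2}, so that $\int_{\RR}|\widehat{g}(-\tfrac12+\imath\xi)|^{2}\,d\xi\lesssim\Vert\widehat{w}(-1+\imath\cdot)\Vert_{\mL^{2}(\RR)}^{2}=2\pi\Vert w\Vert^{2}_{\calL^{2}_{-1}(\RR_{+})}=2\pi\int_{0}^{\infty}|\chi(r)u(r)|^{2}\,r\,dr\lesssim\Vert u\Vert_{\mL^{2}(\RR_{+})}^{2}$, the last step using that $\supp(\chi)$ is bounded. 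Summing the two bounds yields $\Vert g\Vert_{\tilde{\mH}^{1/2}(\RR_{+})}\lesssim\Vert u\Vert_{\mL^{2}(\RR_{+})}$; taking the supremum over $u\in\mathscr{C}_{0}^{\infty}(\RR_{+})\setminus\{0\}$ completes the proof.

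The step I expect to be the main obstacle is the second paragraph: one must genuinely exploit that $\mathscr{K}_{\pi/2}-\mathscr{K}_{\theta}$ is half a derivative smoother than $\mathscr{K}_{\theta}$ alone — both that the poles of the Mellin symbol cancel across the two relevant lines and that $M$ decays like $|\xi|^{-1/2}$ on $\{\Re e\{\lambda\}=0\}$, which is precisely what Propositions~\ref{prop:1} and~\ref{prop:2} together with Corollaries~\ref{cor:1}, \ref{ContinuityEstimate1} and~\ref{ContinuityEstimate2} supply — and one must justify, through the Hardy-space picture of Remark~\ref{rem:3}, that $g$ really lies in $\calL^{2}_{0}(\RR_{+})\cap\calL^{2}_{-1/2}(\RR_{+})$, so that the line integrals above legitimately compute a quantity equivalent to $\Vert g\Vert_{\tilde{\mH}^{1/2}(\RR_{+})}^{2}$. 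The rest is bookkeeping, the only point to notice being that $\mathscr{K}_{\theta}$ shifts $\widehat{w}$ by $-\tfrac12$, so the $\tilde{\mH}^{1/2}$-lines $\Re e\{\lambda\}\in\{0,-\tfrac12\}$ for $g$ pull back to $\Re e\{\lambda\}\in\{-\tfrac12,-1\}$ for $\widehat{w}$, on which the relevant $\calL^{2}$-norms of $w$ are controlled by $\Vert u\Vert_{\mL^{2}(\RR_{+})}$.
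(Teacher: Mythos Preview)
Your proof is correct and follows essentially the same route as the paper: decompose $\Vert g\Vert_{\tilde{\mH}^{1/2}(\RR_{+})}^{2}$ via Equation~\eqref{eq:norms_2}, Equation~\eqref{eq:norm_L2} and Lemma~\ref{lem:4} into Mellin integrals on the lines $\Re e\{\lambda\}\in\{0,-\tfrac12\}$, insert the factorisation of Lemma~\ref{lem:op2}, and bound the resulting symbols using Corollaries~\ref{ContinuityEstimate1} and~\ref{ContinuityEstimate2}. The one notable difference is that you add a Hardy-space argument (your second paragraph) to justify that $g\in\calL^{2}_{0}(\RR_{+})\cap\calL^{2}_{-1/2}(\RR_{+})$, so that the Mellin factorisation---proved in Lemma~\ref{lem:op2} only on the open strip $-\tfrac12<\Re e\{\lambda\}<0$---legitimately extends to the boundary lines; the paper applies this factorisation directly on those lines without comment, so your version is more careful on this point.
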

	\begin{proof}
		Picking an arbitrary $u \in \mathscr{C}^\infty_{0}(\RR_+)\setminus\{0\}$, according to 
		Equation \eqref{eq:norms_2}, we need to study and derive an upper bound for the norm 
		\begin{equation}\label{NormToBeBounded}
			\begin{aligned}
				& \Vert v\Vert^2_{\tilde{\mH}^{1/2}(\RR_{+})} = \Vert v\Vert^2_{\mL^2(\RR_{+})} + \Vert v\Vert^2_{\calL_0^2(\RR_{+})}+\vert v\vert^2_{\tilde{\mH}^{1/2}(\RR_{+})} \\
				&\text{where}\quad v = (\mathscr{A}_{\pi/2}^+ - \mathscr{A}_{\theta}^+)(\chi u).
			\end{aligned}
		\end{equation}
		To do so, we shall make use of the characterisation of the Lebesgue norm and Sobolev semi-norms in terms of Mellin symbols given in Section \ref{Sec4:3}. To estimate the first term on the right hand side of Equation \eqref{NormToBeBounded}, we combine Equation \eqref{eq:norm_L2} together with
		Lemma \ref{lem:op2} and Corollary \ref{ContinuityEstimate2} to obtain a constant $C>0$ such that 
		\begin{equation*}
			\begin{aligned}
				& \Vert (\mathscr{A}_{\pi/2}^+ - \mathscr{A}_{\theta}^+)(\chi u)\Vert^2_{{\mL}^{2}(\RR_{+})}\\
				& = \frac{1}{2\imath \pi}\int_{-1/2 - \imath \infty}^{-1/2 + \imath \infty} \big\vert\widehat{\mathfrak{K}}_{\theta}(\lambda+1/4) -
				\widehat{\mathfrak{K}}_{\pi/2}(\lambda+1/4)\big\vert^2\vert\widehat{\chi u}(\lambda-1/2)\vert^2\, d\lambda\\
				& \leq \frac{C}{2\imath \pi}\int_{-1/2 - \imath \infty}^{-1/2 + \imath \infty} \vert\widehat{\chi u}(\lambda-1/2)\vert^2\, d\lambda=
				\frac{C}{2 \pi} \; \big\Vert \widehat{\chi u}\big\Vert^2_{\mL^{2}(\RR_{-1})} = C \Vert \chi u\Vert^2_{\calL^{2}_{-1}(\RR_{+})},
			\end{aligned}
		\end{equation*}  
		where the last equality follows directly from the Parseval theorem for the Mellin transform (Lemma \ref{lem:Parseval}).
		Finally, using the fact that $\chi \in \mathscr{C}_{\comp}^{\infty}(\RR_+)$ is fixed, we can deduce the existence of some constant $C'_{\chi}>0$ that depends
		only on $\chi$ such that
		\begin{align*}
			\Vert  (\mathscr{A}_{\pi/2}^+ - \mathscr{A}_{\theta}^+)(\chi u)\Vert^2_{{\mL}^{2}(\RR_{+})}  \leq C \Vert \chi u\Vert^2_{\calL^{2}_{-1}(\RR_{+})}
			\leq  C C_{\chi}' \Vert u \Vert^2_{\calL^{2}_{-1/2}(\RR_{+})}=C C_{\chi}' \Vert u \Vert^2_{\mL^{2}(\RR_{+})}.
		\end{align*}
		The second term in Equation \eqref{NormToBeBounded} can be estimated in an identical manner using Equation~\eqref{eq:norm_L2},
		Lemma~\ref{lem:op2} and Corollary \ref{ContinuityEstimate2} to yield
		\begin{align*}
			\Vert (\mathscr{A}_{\pi/2}^+ - \mathscr{A}_{\theta}^+)(\chi u)\Vert^2_{\calL^{2}_{0}(\RR_{+})} \leq C \tilde{C}_{\chi} \Vert u \Vert^2_{\mL^{2}(\RR_{+})},
		\end{align*}
		where the constant $C>0$, which is independent of $u$ and $\chi$, arises due to the use of Corollary~\ref{ContinuityEstimate2} and the constant $\tilde{C}_{\chi}>0$ depends only on $\chi$.
		
		\noindent
		To estimate the third term, we use the Mellin characterisation of the Sobolev semi-norm given by Lemma \ref{lem:4}
		together with Lemma \ref{lem:op2} and Corollary \ref{ContinuityEstimate1}. We thus deduce the existence
		of constants ${C}',\bar{C}, {C}_{\chi}'>0$ with ${C}',\bar{C}$ independent of $u$ and $\chi$ and ${C}'_{\chi}$ dependent only on $\chi$ such that
		\begin{equation*}
			\begin{aligned}
				& \vert  (\mathscr{A}_{\pi/2}^+ - \mathscr{A}_{\theta}^+)(\chi u)\vert_{ \mH^{1/2}(\RR_{+})}^{2}\\
				& \leq \frac{{C'}}{2\imath\pi}\int_{-\imath\infty}^{+\imath\infty}
				\frac{\vert\lambda\vert^{2}}{1+\vert\lambda\vert}
				\big\vert\widehat{\mathfrak{K}_{\theta}}(\lambda+1/4) -
				\widehat{\mathfrak{K}_{\pi/2}}(\lambda+1/4)\big\vert^2\vert\widehat{\chi u}(\lambda-1/2)\vert^2\; d\lambda,\\
				& \leq \frac{\bar{C} {C}'}{2\imath\pi}\int_{-\imath\infty}^{+\imath\infty}
				\vert\widehat{\chi u}(\lambda-1/2)\vert^{2} d\lambda =\bar{C}{C}' \Vert \chi u\Vert_{ \mL^{2}(\RR_{+})}^{2}
				\leq \bar{C}\bar{C}' {C}_{\chi}'\Vert u\Vert_{ \mL^{2}(\RR_{+})}^{2}.
			\end{aligned}
		\end{equation*}
		Combining the estimates obtained for each term in Equation \eqref{NormToBeBounded} now completes the proof.
	\end{proof}
	
	\newpage
	\bibliographystyle{plain}
	\bibliography{biblio.bib}

	\vspace{1cm}
	\section*{Declarations}
	
	\subsection*{Funding}
Not applicable.

\subsection*{Conflicts of interest}
The authors declare that there is no conflict of interests regarding the publication of this article.

\subsection*{Availability of data and material}
Not applicable.

\subsection*{Code availability }
Not applicable.

\end{document}